\documentclass[reqno,10pt]{amsart}
\usepackage{amsmath,amssymb,amsthm,mathrsfs}
\usepackage{longtable}
\usepackage{enumerate}
\usepackage{graphicx}
\usepackage{subcaption}
\usepackage{wasysym}
\usepackage{upgreek}
\usepackage[dvipsnames]{xcolor}
\usepackage{cancel}
\usepackage{soul}
\usepackage{cite}
\usepackage{mathtools}

\allowdisplaybreaks

\newtheorem{theorem}{Theorem}[section]
\newtheorem{lemma}[theorem]{Lemma}
\newtheorem{prop}[theorem]{Proposition}
\newtheorem{corollary}[theorem]{Corollary}
\newtheorem{definition}[theorem]{Definition}

%%%% Comandos Juan%%%%%% 

\newcommand{\un}{\mathbf{u}}

\newcommand{\R}{\mathbb{R}}

\numberwithin{equation}{section}

%%%%%%%%%%%%%%%%%%%%%%%%

\usepackage{xcolor}

\title[On classical OP and the Cholesky factorization of Hankel matrices]{On classical orthogonal polynomials and the Cholesky factorization of a class of Hankel matrices}

\begin{document}

\author[M. E. Marriaga, G. Vera de Salas, M. Latorre, R. Muñoz Alcázar]
{Misael E. Marriaga*, Guillermo Vera de Salas, Marta Latorre, Rubén Muñoz Alcázar}

\address[M. E. Marriaga]{Departamento de Matem\'atica Aplicada, Ciencia e Ingenier\'ia de Materiales y
Tecnolog\'ia Electr\'onica, Universidad Rey Juan Carlos (Spain)}
\email{misael.marriaga@urjc.es} 
\address[G. Vera de Salas]{Departamento de Matem\'atica Aplicada, Ciencia e Ingenier\'ia de Materiales y
Tecnolog\'ia Electr\'onica, Universidad Rey Juan Carlos (Spain)}
\email{guillermo.vera@urjc.es} 
\address[M. Latorre]{Departamento de Matem\'atica Aplicada, Ciencia e Ingenier\'ia de Materiales y
Tecnolog\'ia Electr\'onica, Universidad Rey Juan Carlos (Spain)}
\email{marta.latorre@urjc.es}
\address[R. Muñoz Alc\'azar]{Departamento de \'Algebra, Geometr\'ia y Topolog\'ia, Universidad de M\'alaga (Spain)}
\email{rubenm.alcazar@uma.es}

\thanks{*Corresponding Author: Misael E. Marriaga}

\date{\today}

\begin{abstract}
Classical moment functionals (Hermite, Laguerre, Jacobi, Bessel) can be characterized as those linear functionals whose moments satisfy a second order linear recurrence relation. In this work, we use this characterization to link the theory of classical orthogonal polynomials and the study of Hankel matrices whose entries satisfy a second order linear recurrence relation. Using the recurrent character of the entries of such Hankel matrices, we give several characterizations of the triangular and diagonal matrices involved in their Cholesky factorization and connect them with a corresponding characterization of classical orthogonal polynomials.
\end{abstract}

\subjclass[2010]{15A23, 15A23, 15A63, 33C45, 33D45}

\keywords{Orthogonal polynomials, Hankel matrices, Cholesky factorization.}

\maketitle

%%%%%%%%%%%%%%%%%%%%%%%%%%%%%%%%%%%%%%%%%%%%%%%%%%%%%%%%%%%%%%%%%%%%%%%%%%%
\section{Introduction}
%%%%%%%%%%%%%%%%%%%%%%%%%%%%%%%%%%%%%%%%%%%%%%%%%%%%%%%%%%%%%%%%%%%%%%%%%%%

Classical orthogonal polynomials (Hermite, Laguerre, Jacobi, and Bessel) have been characterized using different approaches. For instance, they can be characterized in terms of differential equations  \cite{B29}, their derivatives (\cite{Hahn35,Krall36}), structure relations (\cite{ASC72,Ger40, MBP94}), and a Rodrigues formula (\cite{Tri70}), among others (see \cite{GMM21,MP94} and the references therein). More recently, an approach that uses linear functionals and duality was introduced by P. Maroni (\cite{Ma87}). In all of these approaches, the starting point is to use the basis of monomials to represent polynomials and state the results.

However, an interesting (and most recent) approach is to start from the theory of semi-infinite matrices. The bibliography on this subject has grown greatly in the last years, and it has become increasingly difficult to do a comprehensive review of all the references. Hence, we refer the reader to \cite{V13,V2021} (and the refereces therein) where the algebra of infinite triangular matrices and the algebra of infinite Hessenberg matrices are used to study some aspects of orthogonal polynomials, and to \cite{{D20},M2021} (and the references therein) where the main tool is the Cholesky factorization of Gram matrices of bilinear forms. We remark that the Cholesky factorization proves to be quite fruitful in the study of non standard orthogonality such as multiple, matrix, Sobolev, and multivariate orthogonality as well as orthogonality on the unit circle of the complex plane, and have successfully found its way into applications in random matrices, Toda lattices, integrable systems, Riemann-Hilbert problems, Painlevé equations, and Darboux transformations, among others topics.

Our goal is to contribute to the link between matrix factorization and orthogonal polynomials. In particular, we deal with several characterizations of classical orthogonal polynomials. However, we shift our paradigm from infinite matrices to the \textit{finite} Gram matrix $G_n$ associated with a bilinear form defined on the linear space of polynomials of degree at most $n\geqslant 0$. For standard orthogonality, $G_n$ is a Hankel matrix (all of its antidiagonals are constant). Taking into account that the moments of a linear functional associated with a family of classical orthogonal polynomials satisfy a second order linear recurrence relation (\cite{MBP94}), we can say that this paper deals with Hankel matrices with an additional structure: the entries of $G_n$ satisfy such recurrence relation. In this way, we can extend the bilinear form to the linear space of polynomials of degree at most $n+1$ by constructing a new Gram matrix $G_{n+1}$ by means of bordering $G_n$ with a new row and column whose entries are obtained using the recurrence relation and the entries of $G_n$. The resulting matrix $G_{n+1}$ will also be a Hankel matrix with the additional structure mentioned above. Consequently, it will be possible to prove by induction that the properties satisfied by $G_n$ are also satisfied by $G_{n+1}$.

The change from infinite matrices to subsequently bordering finite matrices is motivated by an alternative proof of a classical result about the interlacing of zeros of orthogonal polynomials of consecutive degrees found in \cite{GMM21}. This classical result states that if the zeros of any two polynomials of consecutive degrees interlace, then these polynomials are elements of a sequence of orthogonal polynomials associated with a positive definite moment functional. This result can be proved using the Euclidean division algorithm for polynomials. However, this result can also be deduced using the following theorem about interlacing eigenvalues of Hermitian matrices found in \cite[p. 185]{HJ85}:
\begin{theorem}
    Let $n$ be a given positive integer, and let  $\{x_{n,k}\}_{k=1}^n$ and $\{x_{n-1,k}\}_{k=1}^{n-1},$ be two given sequences of numbers such that  	
	$$x_{n,1}<x_{n-1,1}<\cdots <x_{n,k}<x_{n-1,k}<x_{n,k+1}<\cdots< x_{n-1,n-1}<x_{n,n}.$$
Let $\Lambda=\operatorname{diag}(x_{n-1,1},x_{n-1,2},\cdots,x_{n-1,n-1})$. Then there exists  real number $b$ and a real vector $\bar{y}=(y_1,\ldots,y_{n-1})^{\top}\in\R^{n-1}$ such that $(x_{n,k})_{k=1}^n$ is the set of eigenvalues of the real symmetric matrix
$$B=\left(\begin{array}{c|c}
\Lambda & \bar{y}\\
\hline
\\[-6pt]
\bar{y}^{\top}&b
\end{array}\right).$$
\end{theorem}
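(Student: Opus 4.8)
The plan is to reduce the statement to a single polynomial identity: I will show that $b$ and the squared entries $y_\ell^2$ can be chosen so that the characteristic polynomial of $B$ equals $\prod_{k=1}^{n}(x-x_{n,k})$. Write $\lambda_j:=x_{n-1,j}$ for the diagonal entries of $\Lambda$. First I would compute the characteristic polynomial of the bordered matrix by means of a Schur complement. For $x\notin\{\lambda_1,\dots,\lambda_{n-1}\}$ the block $xI_{n-1}-\Lambda=\operatorname{diag}(x-\lambda_1,\dots,x-\lambda_{n-1})$ is invertible, so
$$
\det(xI_n-B)=\Big(\textstyle\prod_{j=1}^{n-1}(x-\lambda_j)\Big)\Big(x-b-\sum_{j=1}^{n-1}\frac{y_j^{2}}{x-\lambda_j}\Big)=(x-b)\prod_{j=1}^{n-1}(x-\lambda_j)-\sum_{j=1}^{n-1}y_j^{2}\prod_{i\neq j}(x-\lambda_i).
$$
Both sides are polynomials in $x$, so this identity extends to all $x$ by continuity. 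This reduces the theorem to finding real $b$ and real $y_1,\dots,y_{n-1}$ such that the right-hand side coincides with the monic polynomial $\prod_{k=1}^{n}(x-x_{n,k})$.

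Next I would determine the unknowns by evaluating the desired identity at well-chosen points. Setting $x=\lambda_\ell$ kills every term of the first product and every summand except $j=\ell$, yielding
$$
\prod_{k=1}^{n}(\lambda_\ell-x_{n,k})=-\,y_\ell^{2}\prod_{i\neq\ell}(\lambda_\ell-\lambda_i),\qquad\text{so}\qquad y_\ell^{2}=-\,\frac{\prod_{k=1}^{n}(\lambda_\ell-x_{n,k})}{\prod_{i\neq\ell}(\lambda_\ell-\lambda_i)} .
$$
The scalar $b$ I would fix by matching the coefficient of $x^{n-1}$ on both sides (the sum term has degree $n-2$ and does not contribute), which gives the trace condition $b=\sum_{k=1}^{n}x_{n,k}-\sum_{j=1}^{n-1}\lambda_j$.

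The main obstacle is to verify that the formula for $y_\ell^2$ actually produces a nonnegative real number, so that $y_\ell$ may be taken real; this is precisely where the strict interlacing hypothesis enters. The strictness guarantees all factors are nonzero, so $y_\ell^2$ is well defined and nonzero. For the sign, I would count signs of the factors using the chain $x_{n,1}<\lambda_1<x_{n,2}<\lambda_2<\cdots<\lambda_{n-1}<x_{n,n}$: in the numerator the factors $\lambda_\ell-x_{n,k}$ are positive for $k\le\ell$ and negative for $k\ge\ell+1$, giving sign $(-1)^{\,n-\ell}$; in the denominator the factors $\lambda_\ell-\lambda_i$ are positive for $i<\ell$ and negative for $i>\ell$, giving sign $(-1)^{\,n-1-\ell}$. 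Hence the quotient has sign $(-1)$, and the outer minus sign makes $y_\ell^{2}>0$, so each $y_\ell$ is a genuine real number.

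Finally I would close the argument with a degree count. Let $D(x)$ denote the difference between the right-hand side of the determinant formula (with the chosen $b$, $y_\ell$) and $\prod_{k=1}^n(x-x_{n,k})$. Both are monic of degree $n$, so $\deg D\le n-1$. By construction $D(\lambda_\ell)=0$ for the $n-1$ distinct values $\ell=1,\dots,n-1$, whence $D(x)=c\prod_{\ell=1}^{n-1}(x-\lambda_\ell)$ for a constant $c$ equal to the coefficient of $x^{n-1}$ in $D$; but the choice of $b$ forces that coefficient to vanish, so $c=0$ and $D\equiv0$. Therefore $\det(xI_n-B)=\prod_{k=1}^{n}(x-x_{n,k})$, and since $B$ is real symmetric its eigenvalues are exactly $\{x_{n,k}\}_{k=1}^{n}$, as required.
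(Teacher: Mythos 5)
Your proof is correct. One point of comparison must be made clear at the outset: the paper does not prove this theorem at all --- it is quoted in the introduction from Horn and Johnson \cite{HJ85} purely as motivation for the bordering paradigm, so there is no proof in the paper to compare against. Your argument is the standard one for this inverse eigenvalue problem (and essentially the one in \cite{HJ85}): the Schur complement reduces the characteristic polynomial of the bordered matrix to the secular form
$$
\det(xI_n-B)=(x-b)\prod_{j=1}^{n-1}(x-\lambda_j)-\sum_{j=1}^{n-1}y_j^{2}\prod_{i\neq j}(x-\lambda_i),
$$
evaluation at $x=\lambda_\ell$ forces the Lagrange-type formula for $y_\ell^{2}$, the trace fixes $b$, and strict interlacing is exactly what makes each $y_\ell^{2}$ positive. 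All the individual steps check out: the sign count $(-1)^{n-\ell}$ in the numerator versus $(-1)^{n-1-\ell}$ in the denominator is right, the extension of the Schur identity from $x\notin\{\lambda_1,\dots,\lambda_{n-1}\}$ to all $x$ by polynomial identity is legitimate, and the closing degree argument is airtight --- $D$ vanishes at the $n-1$ distinct points $\lambda_\ell$, so $D=c\prod_\ell(x-\lambda_\ell)$, and your choice of $b$ kills the coefficient of $x^{n-1}$, hence $c=0$. The one thing you get for free and could state explicitly is that the strict inequalities make the $x_{n,k}$ pairwise distinct, so ``the set of eigenvalues'' in the statement is matched exactly (each with multiplicity one); this is immediate from $\det(xI_n-B)=\prod_{k=1}^n(x-x_{n,k})$ but worth a sentence.
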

\noindent From this, it seems reasonable to think that the procedure of bordering matrices encoding information about polynomial sequences is well suited for presenting and deducing results about orthogonal polynomials most likely due to the fact that for all $n\geqslant 0$, the linear space of polynomials of degree at most $n$ is a subspace of the space of polynomials of degree at most $n+1$. In this way, the Gram matrices of bilinear forms associated with classical orthogonal polynomials posses the adequate structure to start exploring our proposed paradigm.

The paper is organized as follows. Section \ref{classicalops} presents basic background on classical orthogonal polynomials and their associated linear functionals, and we introduce classical sequences of real numbers in Section \ref{classicalsequences}. In Section \ref{orthogonality}, we discuss the Cholesky factorization of Hankel matrices obtained from given sequences of real number and its relation to orthogonal polynomials. We present several characterizations of classical sequences of real numbers in Section \ref{characterizations}.

%%%%%%%%%%%%%%%%%%%%%%%%%%%%%%%%%%%%%%%%%%%%%%%%%%%%%%%%%%%%%%%%%%%%%%%%%%%
\section{Orthogonal polynomials and linear functionals}\label{classicalops}
%%%%%%%%%%%%%%%%%%%%%%%%%%%%%%%%%%%%%%%%%%%%%%%%%%%%%%%%%%%%%%%%%%%%%%%%%%%

For $n\geqslant 0$, let $\Pi_n$ be the linear space of polynomials of degree at most $n$ of a real variable and real coefficients, and let $\Pi=\bigcup_{n\geqslant 0}\Pi_n$.

Let $\Pi^*$ denote the algebraic dual space of $\Pi$. That is, $\Pi^*$ is the linear space of linear functionals defined on $\Pi$,
$$
\Pi^*=\left\{\mathbf{u}:\Pi \rightarrow \mathbb{R} \ \ : \ \ \mathbf{u} \text{ is linear} \right\}.
$$
We denote by $\langle \mathbf{u}, p\rangle$ the image of the polynomials $p$ under the linear functional $\mathbf{u}$.

Any linear functional $\mathbf{u}$ is completely defined by the values
$$
\mu_n\, :=\,\langle \mathbf{u}, x^n\rangle,\quad n\geqslant 0,
$$
and extended by linearity to all polynomials, where $\mu_n$ is called the $n$-th moment of $\mathbf{u}$. Therefore, we refer to $\mathbf{u}$ as a moment functional.

A moment functional $\mathbf{u}$ is called positive definite if $\langle \mathbf{u}, p^2\rangle >0$ for every non zero polynomial $p\in \Pi$.

Let $\mathbf{u}$ be a moment functional. A sequence of polynomials $\{P_n(x)\}_{n\geqslant 0}$ is called an orthogonal polynomial sequence (OPS) with respect to $\mathbf{u}$ if 
\begin{enumerate}
    \item[(1)] $\deg\,P_n=n$,
    \item[(2)] $\langle \mathbf{u}, P_n\,P_m\rangle = h_n\,\delta_{n,m}$, with $h_n\ne 0$.
\end{enumerate}
Here $\delta_{n,m}$ denotes the Kronecker delta defined as
$$
\delta_{n,m}=\left\{\begin{array}{ll}
1, & n=m,\\
0, & n\ne m.
\end{array}
\right.
$$
If there exists an OPS associated with $\mathbf{u}$, then $\mathbf{u}$ is called quasi-definite. Positive definite moment functionals are quasi-definite.

Observe that an OPS $\{P_n(x)\}_{n\geqslant 0}$ constitutes a basis for $\Pi$. If for all $n\geqslant 0$, the leading coefficient of $P_n(x)$ is 1, then $\{P_n(x)\}_{n\geqslant 0}$ is called a monic orthogonal polynomial sequence (MOPS).

Given a moment functional $\mathbf{u}$ and a polynomial $q(x)$, we define the left multiplication of $\mathbf{u}$ by $q(x)$ as the moment functional $q\,\mathbf{u}$ such that
$$
\langle q\,\mathbf{u}, p\rangle \,=\,\langle \mathbf{u}, q\,p\rangle, \quad \forall p\in \Pi,
$$
and we define the distributional derivative $D\mathbf{u}$ by
$$
\langle D\mathbf{u}, p\rangle \,=\, -\langle \mathbf{u}, p'\rangle, \quad \forall p\in \Pi.
$$
Moreover, the product rule is satisfied, that is,
$$
D(q\,\mathbf{u})\,=\,q'\,\mathbf{u}+q\,D\mathbf{u}.
$$

\begin{definition}
Let $\mathbf{u}$ be a quasi-definite moment functional, and let $\{P_n(x)\}_{n\geqslant 0}$ be an OPS with respect to $\mathbf{u}$. Then $\mathbf{u}$ is classical if there are nonzero polynomials $\phi(x)$ and $\psi(x)$ with $\deg \phi \leqslant 2$ and $\deg \psi =1$, such that $\mathbf{u}$ satisfies the distributional Pearson equation
\begin{equation}\label{pearson}
    D(\phi\,\mathbf{u})\,=\,\psi\,\mathbf{u}.
\end{equation}
The sequence $\{P_n(x)\}_{n\geqslant 0}$ is called a classical OPS.
\end{definition}

The following characterizations of classical moment functionals and OPS will be of central importance in the sequel.

\begin{theorem}\label{th:classical-char}
Let $\mathbf{u}$ be a quasi-definite moment functional, and $\{P_n(x)\}_{n\geqslant 0}$ its associated MOPS. The following statements are equivalent: 
\begin{enumerate}
    \item[1.] $\mathbf{u}$ is a classical moment functional.
    \item[2.] (Bochner, \cite{B29}) There are nonzero polynomials $\phi(x)$ and $\psi(x)$ with $\deg \phi\leqslant 2$ and $\deg \psi=1$ such that, for $n\geqslant 0$, $P_n(x)$ satisfies
    \begin{equation}\label{bochner-diffeq}
    \phi(x)\,P_n''(x)+\psi(x)\,P_n'(x)\,=\,\lambda_n\,P_n(x),
    \end{equation}
    where $\lambda_n=n\,(\frac{n-1}{2}\phi''+\psi')$.
    \item[3.] (Hahn,\cite{Hahn35}) There is a nonzero polynomial $\phi(x)$ with $\deg \phi\leqslant 2$, such that $\left\{\frac{P_{n+1}'(x)}{n+1} \right\}_{n\geqslant 0}$ is the MOPS associated with the moment functional $\mathbf{v}=\phi(x)\,\mathbf{u}$.
   \item[4.] (First structure relation, \cite{ASC72}) There is a nonzero polynomial with $\deg \phi\leqslant 2$, and real numbers $a_n$, $b_n$, $c_n$, $n\geqslant 1$, with $c_n\ne 0$, such that
   $$
  \phi(x)\,P'_n(x)\,=\,a_n\,P_{n+1}(x)+b_n\,P_{n}(x)+c_n\,P_{n-1}(x), \quad n\geqslant 1.
 $$
    \item[5.] (Second structure relation, \cite{Ger40, MBP94}) There are real numbers $\alpha_n$ and $\beta_n$, $n\geqslant 2$, such that
    \begin{equation}\label{structrel}
    P_n(x)\,=\,\frac{P'_{n+1}(x)}{n+1}+\alpha_n\,\frac{P_n'(x)}{n}+\beta_n\,\frac{P'_{n-1}(x)}{n-1},\quad n\geqslant 2.
    \end{equation}
    \item[6.] (Rodrigues formula, \cite{Tri70}) There is a non zero polynomial $\phi(x)$ with $\deg \phi\leqslant 2$, and a non zero real number $k_n\ne 0$ such that
    $$
    D^n(\phi^n(x)\,\mathbf{u})\,=\,k_n\,P_n(x)\,\mathbf{u}, \quad n\geqslant 0.
    $$
\end{enumerate}
\end{theorem}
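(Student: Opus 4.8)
The plan is to use condition~(1), the Pearson equation \eqref{pearson}, as a hub and to show that each of the remaining statements is equivalent to it; the single tool underlying almost every step is an integration-by-parts identity equivalent to \eqref{pearson}. Writing $L[p]=\phi\,p''+\psi\,p'$, the first thing I would prove is that \eqref{pearson} holds if and only if $L$ is symmetric with respect to $\mathbf{u}$, i.e. $\langle\mathbf{u},L[p]\,q\rangle=\langle\mathbf{u},p\,L[q]\rangle$ for all $p,q\in\Pi$. One direction is a direct computation: using $D(\phi\,\mathbf{u})=\psi\,\mathbf{u}$ and the product rule one gets $\langle\mathbf{u},L[p]\,q\rangle=-\langle\phi\,\mathbf{u},p'\,q'\rangle$, which is manifestly symmetric. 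For the converse I would note that $L[p]\,q-p\,L[q]=\phi\,W'+\psi\,W$ with $W=p'q-pq'$, and that as $p,q$ range over monomials $W$ ranges over all of $\Pi$ up to scalars; hence symmetry forces $\langle\mathbf{u},\phi\,r'+\psi\,r\rangle=0$ for every $r$, which is exactly \eqref{pearson} tested on the monomial basis.

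With this identity in hand, $(1)\Leftrightarrow(2)$ is immediate. Since $\deg\phi\le 2$ and $\deg\psi=1$, the map $L$ sends $\Pi_n$ into $\Pi_n$; writing $L[P_n]=\sum_{k\le n}c_kP_k$ and testing symmetry against $P_m$ with $m<n$ gives $c_m h_m=\langle\mathbf{u},L[P_n]P_m\rangle=\langle\mathbf{u},P_nL[P_m]\rangle=0$, so $L[P_n]=\lambda_nP_n$, and comparing leading coefficients yields $\lambda_n=n\bigl(\tfrac{n-1}{2}\phi''+\psi'\bigr)$. Conversely, if \eqref{bochner-diffeq} holds then $\langle\mathbf{u},L[P_i]P_j\rangle=\lambda_i h_i\delta_{ij}$ is symmetric in $i,j$, so $L$ is symmetric and \eqref{pearson} follows from the master identity.

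I would then deduce the forward implications $(1)\Rightarrow(3),(4),(5),(6)$ from the same machinery together with degree counting and orthogonality. For Hahn, the identity $\langle\phi\,\mathbf{u},P_{n+1}'P_{m+1}'\rangle=-\langle\mathbf{u},L[P_{n+1}]P_{m+1}\rangle=-\lambda_{n+1}h_{n+1}\delta_{nm}$ shows that the monic polynomials $P_{n+1}'/(n+1)$ are orthogonal with respect to $\mathbf{v}=\phi\,\mathbf{u}$, giving~(3); expanding $P_n$ in this derived MOPS and using $\langle\phi\,\mathbf{u},P_n\,P_{k+1}'/(k+1)\rangle=\langle\mathbf{u},P_n\,\phi\,P_{k+1}'/(k+1)\rangle=0$ for $k\le n-3$ produces the three-term expansion~(5). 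For~(4) I would expand $\phi\,P_n'\in\Pi_{n+1}$ in the MOPS and use integration by parts to kill the coefficients below $P_{n-1}$. For Rodrigues~(6) I would argue by induction, first proving $D(\phi^{n+1}\mathbf{u})=\phi^n(\psi+n\phi')\,\mathbf{u}$ from the product rule, then showing $D^n(\phi^n\mathbf{u})=R_n\,\mathbf{u}$ with $\deg R_n=n$ and $\langle\mathbf{u},R_n x^m\rangle=(-1)^n\langle\phi^n\mathbf{u},(x^m)^{(n)}\rangle=0$ for $m<n$, so that $R_n=k_nP_n$. The converses then close the equivalences: $(6)\Rightarrow(1)$ is the case $n=1$, where $D(\phi\,\mathbf{u})=k_1P_1\,\mathbf{u}$ is already \eqref{pearson} with $\psi=k_1P_1$; and for $(3),(4),(5)\Rightarrow(1)$ I would show that the given relation forces $\langle D(\phi\,\mathbf{u}),P_k\rangle=0$ for $k\ge 2$, so that $D(\phi\,\mathbf{u})$ and a suitable $\psi\,\mathbf{u}$ with $\deg\psi\le 1$ agree on all $P_k$ once the two free parameters of $\psi$ are matched against $P_0$ and $P_1$.

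The main obstacle I anticipate is not the forward direction but the bookkeeping of non-degeneracy and the converse reconstructions. Throughout one must invoke the admissibility built into quasi-definiteness to guarantee $\lambda_n\neq 0$ for $n\ge 1$ (so that Hahn's derived family is genuinely orthogonal and $\mathbf{v}=\phi\,\mathbf{u}$ is quasi-definite), that $k_n\neq 0$ in~(6), and that the coefficients $c_n$ and $\beta_n$ in the structure relations do not vanish; without these the relations degenerate and the equivalences break. The most delicate points are recovering \eqref{pearson} from the structure relations~(4) and~(5), where the candidate pair $(\phi,\psi)$ must be produced and shown to have the correct degrees and to be nonzero — this is where the quasi-definite hypothesis does the real work.
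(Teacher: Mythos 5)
First, a point of comparison: the paper does not prove Theorem~\ref{th:classical-char} at all --- it is quoted as background, with each item attributed to the literature (Bochner, Hahn, Al Salam--Chihara, Geronimus--Marcell\'an--Branquinho--Petronilho, Tricomi). So your sketch has to stand on its own. Most of it does: the hub lemma (the Pearson equation \eqref{pearson} is equivalent to symmetry of $L[p]=\phi p''+\psi p'$, via $\langle\mathbf{u},L[p]q\rangle=-\langle\phi\mathbf{u},p'q'\rangle$ and the Wronskian identity $L[p]q-pL[q]=\phi W'+\psi W$) is correct and is the standard modern route; the equivalence $(1)\Leftrightarrow(2)$, the forward implications $(1)\Rightarrow(3),(4),(6)$, and the converses $(3),(4),(6)\Rightarrow(1)$ all go through as you describe. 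One caveat: the admissibility $\lambda_n\neq 0$ is not ``built into'' quasi-definiteness by definition; it is a lemma that must be proved. It does follow cleanly from your own machinery --- if $\lambda_n=0$ then $L[P_n]=0$, whence $\langle\phi\mathbf{u},P_n'p'\rangle=-\langle\mathbf{u},L[P_n]p\rangle=0$ for all $p$, so $(\phi P_n')\,\mathbf{u}=\mathtt{0}$ as a functional, which is impossible for a quasi-definite $\mathbf{u}$ and the nonzero polynomial $\phi P_n'$ --- but as written this step is asserted, not proved.

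The genuine gap is $(5)\Rightarrow(1)$. Your uniform recipe for the converses is ``show that the given relation forces $\langle D(\phi\,\mathbf{u}),P_k\rangle=0$ for $k\geqslant 2$,'' but statement \eqref{structrel} contains no polynomial $\phi$ whatsoever, so there is nothing to insert into $D(\phi\,\mathbf{u})$; the recipe cannot even be started, and flagging the construction of $(\phi,\psi)$ as ``delicate'' does not supply it. The standard repair (Maroni duality, and exactly the construction the paper itself uses in the matrix setting in Theorem~\ref{th;secondstructurerelation}, display \eqref{eq:d}) is: set $Q_k=P_{k+1}'/(k+1)$, write $P_1=Q_1+\alpha_1 Q_0$, and take
$$
\phi(x)=\frac{\beta_2}{h_2}\,P_2(x)+\frac{\alpha_1}{h_1}\,P_1(x)+\frac{1}{h_0}\,P_0(x),\qquad \psi(x)=-\frac{1}{h_1}\,P_1(x).
$$
Then one verifies $D(\phi\,\mathbf{u})=\psi\,\mathbf{u}$ by testing against the basis $\{P_k\}$: the pairing $\langle\phi\,\mathbf{u},P_k'\rangle=k\,\langle\phi\,\mathbf{u},Q_{k-1}\rangle$ is computed by expanding $\phi\,\mathbf{u}$ against \eqref{structrel} (equivalently, by dualizing $P_n=Q_n+\alpha_nQ_{n-1}+\beta_nQ_{n-2}$ to $\mathbf{b}_0=\mathbf{a}_0+\alpha_1\mathbf{a}_1+\beta_2\mathbf{a}_2$ with $\mathbf{a}_k=P_k\mathbf{u}/h_k$ and using $D\mathbf{b}_0=-\mathbf{a}_1$). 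Quasi-definiteness then gives $\phi\neq 0$ (otherwise $P_1\mathbf{u}=\mathtt{0}$) and $\deg\psi=1$ exactly. Without this construction --- or an alternative such as deriving a three-term recurrence for $\{Q_n\}$ from \eqref{structrel} and invoking Favard's theorem plus an identification of the resulting functional as $\phi\,\mathbf{u}$ --- the chain of equivalences is not closed.
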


It is well known (see \cite{B29} as well as \cite{Krall41}) that, up to affine transformations of the independent variable, the only families of positive definite classical orthogonal polynomials are the Hermite, Laguerre, and Jacobi polynomials. The corresponding moment functionals admit an integral representation of the form
$$
\langle \mathbf{u}, p\rangle \,=\,\int_{I}p(x)\,w(x)\,dx, \quad p\in \Pi,
$$
where $I=\mathbb{R}$ and $w(x)=e^{-x^2}$ in the Hermite case, $I=(0,+\infty)$ and $w(x)=x^{\alpha}e^{-x}$ with $\alpha>-1$ in the Laguerre case, and $I=(-1,1)$ and $w(x)=(1-x)^{\alpha}(1+x)^{\beta}$ with $\alpha,\beta>-1$ in the Jacobi case. We note that in each case, $w(x)>0$ on $I$ and, thus, we say that $w(x)$ is a weight function.

The definition of classical moment functionals in terms of the distributional Pearson equation not only encompasses positive definite moment functionals associated with weight functions, but includes the non positive case as well. Considering the non positive definite case gives rise to the Bessel classical moment functional satisfying the distributional Pearson equation \eqref{pearson} with $\phi(x)=x^2$ and $\psi(x)=ax+2$. The Bessel functional is quasi-definite when $a\ne -1,-2,\ldots$ Moreover, it has the following integral representation
$$
\langle \mathbf{u}, p\rangle = \int_c p(z)\,w(z)\,dz, \quad p\in \Pi,
$$
where $w(z)=(2\,\pi\,i)^{-1}z^{a-2}e^{-2/z}$, and $c$ is the unit circle oriented in the counter-clockwise direction. 

Observe that from Theorem \ref{th:classical-char}, if $\mathbf{u}$ is a classical moment functional satisfying~\eqref{pearson}, then $\mathbf{v}=\phi(x)\,\mathbf{u}$ is a classical moment functional satisfying the Pearson equation
$$
D(\phi\,\mathbf{v})\,=\,(\psi+\phi')\,\mathbf{v}.
$$
Iterating this idea, we get that the high-order derivatives of classical orthogonal polynomials are again classical orthogonal polynomials of the same type.

\begin{theorem}[\cite{Hahn35,Krall41, Krall36}]\label{th:polynomials-Q}
Let $\un$ be a classical moment functional satisfying \eqref{pearson}, and $\{P_n(x)\}_{n\geqslant 0}$ its corresponding MOPS. For $k\geqslant 0$, let $\mathbf{v}_k=\phi^k(x)\,\mathbf{u}$ and $\{Q_{n,k}(x)\}_{n\geqslant 0}$ be the sequence of polynomials given by 
\begin{equation}\label{eq-polynomials-Q}
Q_{n,k}(x):=\dfrac{1}{(n+1)_k}P_{n+k}^{(k)}(x),  \quad n\geqslant 0,   
\end{equation}
where $p^{(k)}$ is the $k$-th derivative of $p$, and $(\nu)_k=\nu\,(\nu+1)\cdots (\nu+k-1)$, $(\nu)_0=1$, denotes the Pochhammer symbol. Then, for each $k\geqslant 0$, $\{Q_{n,k}(x)\}_{n\geqslant 0}$ is  a MOPS associated with the moment functional $\mathbf{v}_k$, satisfying
$$
D(\phi\,\mathbf{v}_k)\,=\,\psi_k\,\mathbf{v}_k,
$$
where $\psi_k(x)=\psi(x)+k\,\phi'(x)$. Hence, $\mathbf{v}_k$ is a classical moment functional.
\end{theorem}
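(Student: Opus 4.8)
The plan is to argue by induction on $k$, with the two engines being Hahn's characterization (item~3 of Theorem~\ref{th:classical-char}) and the iteration of the Pearson equation recorded in the remark immediately preceding the statement. The base case $k=0$ is immediate: here $\mathbf{v}_0=\phi^0(x)\,\mathbf{u}=\mathbf{u}$, the empty Pochhammer product gives $(n+1)_0=1$ so that $Q_{n,0}(x)=P_n(x)$, and $\psi_0=\psi$; thus all three assertions hold by the hypothesis that $\mathbf{u}$ is classical with MOPS $\{P_n(x)\}_{n\geqslant 0}$.

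For the inductive step, suppose that $\mathbf{v}_k=\phi^k(x)\,\mathbf{u}$ is classical with $D(\phi\,\mathbf{v}_k)=\psi_k\,\mathbf{v}_k$ and $\psi_k=\psi+k\,\phi'$, and that $\{Q_{n,k}(x)\}_{n\geqslant 0}$ is its MOPS. I would treat the two claims at level $k+1$ in turn. For the Pearson equation, apply the remark to $\mathbf{v}_k$: since $D(\phi\,\mathbf{v}_k)=\psi_k\,\mathbf{v}_k$, the functional $\phi\,\mathbf{v}_k=\mathbf{v}_{k+1}$ satisfies $D(\phi\,\mathbf{v}_{k+1})=(\psi_k+\phi')\,\mathbf{v}_{k+1}$, and since $\psi_k+\phi'=\psi+(k+1)\,\phi'=\psi_{k+1}$ this yields at once that $\mathbf{v}_{k+1}$ is classical and the stated formula for $\psi_{k+1}$. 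For the MOPS, apply Hahn's characterization to the classical functional $\mathbf{v}_k$ (with the same Pearson polynomial $\phi$): the sequence $\{Q_{n+1,k}'(x)/(n+1)\}_{n\geqslant 0}$ is the MOPS associated with $\phi\,\mathbf{v}_k=\mathbf{v}_{k+1}$. It then remains only to identify this sequence with $\{Q_{n,k+1}(x)\}_{n\geqslant 0}$.

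This identification is pure Pochhammer bookkeeping and is where I would concentrate the computational care, though I expect no genuine obstacle. From \eqref{eq-polynomials-Q} one has $Q_{n+1,k}'(x)/(n+1)=P_{n+k+1}^{(k+1)}(x)/[(n+1)(n+2)_k]$, and the elementary identity $(n+1)(n+2)_k=(n+1)_{k+1}$ rewrites the right-hand side as $Q_{n,k+1}(x)$, closing the induction. It is worth recording the consistency check behind the normalization in \eqref{eq-polynomials-Q}: since $P_{n+k}(x)$ is monic of degree $n+k$, its $k$-th derivative has leading coefficient $(n+1)_k$, so the factor $1/(n+1)_k$ is exactly what renders each $Q_{n,k}(x)$ monic of degree $n$, as required of a MOPS.
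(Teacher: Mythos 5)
Your proof is correct: the base case, the product-rule computation giving $D(\phi\,\mathbf{v}_{k+1})=(\psi_k+\phi')\,\mathbf{v}_{k+1}$, the application of Hahn's characterization (item 3 of Theorem \ref{th:classical-char}) to the classical functional $\mathbf{v}_k$, and the Pochhammer identity $(n+1)(n+2)_k=(n+1)_{k+1}$ that identifies $Q_{n+1,k}'/(n+1)$ with $Q_{n,k+1}$ are all sound. The paper gives no proof of this theorem (it is cited to Hahn and Krall), but your induction is precisely the argument the paper sketches in the paragraph immediately preceding the statement---iterating the observation that $\phi\,\mathbf{u}$ is classical with Pearson pair $(\phi,\psi+\phi')$---so it matches the intended approach.
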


%%%%%%%%%%%%%%%%%%%%%%%%%%%%%%%%%%%%%%%%%%%%%%%%%%%%%%%%%%%%%%%%%%%%%%%%%%%
\section{Classical sequences of numbers}\label{classicalsequences}
%%%%%%%%%%%%%%%%%%%%%%%%%%%%%%%%%%%%%%%%%%%%%%%%%%%%%%%%%%%%%%%%%%%%%%%%%%%

This section is devoted to presenting the definition of classical moment functionals from a different approach. We start by introducing sequences of real numbers that satisfy a second order recurrence relation and use them to construct linear functionals defined on $\Pi$.

\begin{definition}\label{def:classical sequence}
Let $\{\mu_n\}_{n\geqslant 0}$ be a sequence of real numbers with $\mu_0\ne 0$. Then $\{\mu_n\}_{n\geqslant 0}$ is a pre-classical sequence if there are real numbers $a,b,c,d,e$ satisfying 
$$
|a|+|b|+|c|>0, \quad n\,a+d\ne 0 \quad n\geqslant 0,
$$
such that the following holds
\begin{equation}\label{eq:ttr-moments}
(n\,a+d)\,\mu_{n+1}+(n\,b+e)\,\mu_n+n\,c\,\mu_{n-1}=0, \quad n \geqslant 0.
\end{equation}
By convention, $\mu_n=0$ whenever $n<0$.
\end{definition}

Let $\{\mu_n\}_{n\geqslant 0}$ be a pre-classical sequence of real numbers. Then it is possible to define a functional $\mathbf{u}$ as
$$
\mu_n\, :=\,\langle \mathbf{u}, x^n\rangle,\quad n\geqslant 0,
$$
and extend it by linearity to all polynomials, where $\mu_n$ is called the $n$-th moment of $\mathbf{u}$. Therefore, we refer to $\mathbf{u}$ as a pre-classical moment functional. Observe that the condition $n\,a+d\ne 0$, $n\geqslant 0$, guarantees that $\mathbf{u}$ is completely defined since each moment  
$$
\mu_{n+1}=-\frac{1}{n\,a+d}[(n\,b+e)\,\mu_n+n\,c\,\mu_{n-1}], \quad n\geqslant 0,
$$
is well-defined.

The recurrence relation \eqref{eq:ttr-moments} can be passed down to the pre-classical moment functional associated with $\{\mu_n\}_{n\geqslant 0}$.

\begin{theorem} \label{th:pearson}
A sequence $\{\mu_n\}_{n\geqslant 0}$ is pre-classical if and only if there are non zero polynomials $\phi(x)$ and $\psi(x)$ with $\deg \phi\leqslant 2$, $\deg \psi =1$, and $\frac{n}{2}\phi''+\psi'\ne 0$ for $n\geqslant 0$, such that the moment functional $\mathbf{u}$ defined by $\mu_n=\langle \mathbf{u},x^n\rangle$ satisfies the distributional Pearson equation
\begin{equation*}
    D(\phi\,\mathbf{u})\,=\,\psi\,\mathbf{u}.
\end{equation*}
\end{theorem}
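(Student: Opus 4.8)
The plan is to translate the Pearson equation $D(\phi\,\mathbf{u})=\psi\,\mathbf{u}$ into an explicit recurrence for the moments $\mu_n=\langle\mathbf{u},x^n\rangle$ and to recognize that this recurrence is exactly \eqref{eq:ttr-moments} once the coefficients are matched. Since an identity of moment functionals is equivalent to the equality of all their moments, and since both \eqref{eq:ttr-moments} and the Pearson equation are linear conditions, the whole equivalence reduces to a single dual computation carried out on the monomial basis.

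Concretely, I would write $\phi(x)=a\,x^2+b\,x+c$ and $\psi(x)=d\,x+e$ with the five coefficients to be identified. Using the definitions of the distributional derivative and of left multiplication, I would compute, for each $n\geqslant 0$,
\[
\langle D(\phi\,\mathbf{u}),x^n\rangle=-\langle\phi\,\mathbf{u},(x^n)'\rangle=-n\,\langle\mathbf{u},\phi(x)\,x^{n-1}\rangle=-n\,(a\,\mu_{n+1}+b\,\mu_n+c\,\mu_{n-1}),
\]
and
\[
\langle\psi\,\mathbf{u},x^n\rangle=\langle\mathbf{u},\psi(x)\,x^n\rangle=d\,\mu_{n+1}+e\,\mu_n.
\]
Subtracting, the $n$-th moment of $D(\phi\,\mathbf{u})-\psi\,\mathbf{u}$ equals $-\big[(n\,a+d)\,\mu_{n+1}+(n\,b+e)\,\mu_n+n\,c\,\mu_{n-1}\big]$. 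Because a moment functional vanishes precisely when all of its moments vanish, the Pearson equation holds if and only if \eqref{eq:ttr-moments} holds with these same coefficients. This gives both implications at once: in the forward direction I read off $\phi,\psi$ from the pre-classical constants $a,b,c,d,e$ and deduce the Pearson equation from the moment identity; in the converse I expand the given $\phi,\psi$ into coefficients and read off \eqref{eq:ttr-moments}.

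It then remains to match the side conditions. The key observation is that $\phi''=2a$ and $\psi'=d$, so that $\tfrac{n}{2}\phi''+\psi'=n\,a+d$; hence the nonvanishing hypothesis $n\,a+d\ne 0$ of Definition \ref{def:classical sequence} is literally the condition $\tfrac{n}{2}\phi''+\psi'\ne 0$ of the theorem. Specializing to $n=0$ yields $d\ne 0$, which says exactly that $\deg\psi=1$ (and in particular $\psi\ne 0$), while $|a|+|b|+|c|>0$ says exactly that $\phi\ne 0$ with $\deg\phi\leqslant 2$. Thus the two lists of hypotheses are not merely compatible but identical under the identification $\phi(x)=a\,x^2+b\,x+c$, $\psi(x)=d\,x+e$.

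I do not anticipate a genuine obstacle: the argument is a direct computation in the dual pairing, and the only points demanding care are the correct use of the definitions of $D\mathbf{u}$ and $q\,\mathbf{u}$ (notably the factor $n$ and the index shift produced by differentiating $x^n$) and the bookkeeping that pairs the five recurrence constants with the coefficients of $\phi$ and $\psi$. The standing assumption $\mu_0\ne 0$ from Definition \ref{def:classical sequence} guarantees $\mathbf{u}\ne 0$ and is carried unchanged through both directions.
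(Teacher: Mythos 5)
Your proof is correct and follows essentially the same route as the paper: both reduce the Pearson equation to its action on the monomials $x^n$, where $\langle D(\phi\,\mathbf{u})-\psi\,\mathbf{u},x^n\rangle = -\big[(n\,a+d)\,\mu_{n+1}+(n\,b+e)\,\mu_n+n\,c\,\mu_{n-1}\big]$, and then match the coefficient conditions ($\tfrac{n}{2}\phi''+\psi'=n\,a+d$, etc.). Your version merely makes explicit what the paper leaves as ``inverting each of the previous steps,'' namely that a moment functional vanishes iff all its moments do, so the equivalence is genuinely two-sided.
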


\begin{proof}
Suppose that $\mathbf{u}$ satisfies \eqref{pearson} with non zero $\phi(x)=a\,x^2+b\,x+c$ and $\psi(x)=d\,x+e$ with $\deg \psi=1$ such that $0\ne \frac{n}{2}\phi''+\psi'=n\,a+d$ for $n\geqslant 0$. Then
$$
\langle \mathbf{u}, \phi\,p'+\psi\,p\rangle =0, \quad \forall p\in \Pi.
$$
In particular,
$$
0=\langle \mathbf{u}, n\,\phi\,x^{n-1}+\psi\,x^n\rangle = (n\,a+d)\,\mu_{n+1}+(n\,b+e)\,\mu_n+n\,c\,\mu_{n-1} , \quad n\geqslant 0.
$$
Therefore $\{\mu_n\}_{n\geqslant 0}$ is a pre-classical sequence of real numbers and, thus, $\mathbf{u}$ is a pre-classical moment functional. It is easy to verify that the implications in the opposite direction holds by inverting each of the previous steps.
\end{proof}

For any sequence $\{\mu_n\}_{n\geqslant 0}$, we can define the sequence of matrices $\{G_n\}_{n\geqslant 0}$ where $G_n$ is an $(n+1)\times (n+1)$ matrix given by $G_0=\mu_0$ and 
\begin{equation}\label{def:Gn}
G_n= \left[\begin{array}{ccc|c}
 & & & \mu_n \\
 & G_{n-1} & &\vdots \\
 & & &\mu_{2n-1}\\
\hline 
\mu_n & \ldots & \mu_{2n-1} & \mu_{2n}
\end{array}\right]=\begin{bmatrix}
\mu_0 & \mu_1 & \ldots & \mu_n \\
\mu_1 & \mu_2 & \ldots & \mu_{n+1} \\
\vdots & \vdots & \ddots & \vdots \\
\mu_n & \mu_{n+1} & \dots & \mu_{2n}
\end{bmatrix}, \quad n\geqslant 0.
\end{equation}

In particular, consider the sequence $\{\mu_n\}_{n\geqslant 0}$ with $\mu_0=1$ and $\mu_n=0$ for $n\geqslant 1$. Observe that this sequence corresponds to the linear functional $\boldsymbol{\delta}\in \Pi^*$, known as the Dirac delta, defined as
$$
\langle \boldsymbol{\delta}, p\rangle =p(0), \quad \forall p\in \Pi.
$$
In this case, we have $G_0=1$,
$$
G_n= \begin{bmatrix}
1 & 0 & \ldots & 0 \\
0 & 0 & \ldots & 0 \\
\vdots & \vdots & \ddots & \vdots \\
0 & 0 & \dots & 0
\end{bmatrix},
$$
and $\det G_n =0$ for $n\geqslant 1$. In the sequel, we will need to exclude this and other similar cases and, therefore, we impose that $\det G_n \ne 0$ for $n\geqslant 0$. Hence, we have the following definition.

\begin{definition}\label{def:classical}
A pre-classical sequence $\{\mu_n\}_{n\geqslant 0}$ is classical if the sequence of matrices $\{G_n\}_{n\geqslant 0}$ defined as in \eqref{def:Gn} satisfy $\det G_n \ne 0$ for $n\geqslant 0$. The moment functional defined by $\mu_n=\langle \mathbf{u}, x^n\rangle$ is called a classical moment functional.
\end{definition}

In the sequel, some orthogonal bases for $\mathbb{R}^{n+1}$ associated with a classical sequence of numbers will play an important role in our study of such sequences. Therefore, in the following section we discuss the orthogonal structure of $\mathbb{R}^{n+1}$ induced by general sequences of numbers.

%%%%%%%%%%%%%%%%%%%%%%%%%%%%%%%%%%%%%%%%%%%%%%%%%%%%%%%%%%%%%%%%%%%%%%%%%%%
\section{Sequences of numbers and orthogonality}\label{orthogonality}
%%%%%%%%%%%%%%%%%%%%%%%%%%%%%%%%%%%%%%%%%%%%%%%%%%%%%%%%%%%%%%%%%%%%%%%%%%%

Given an integer $n\geqslant 0$, a sequence of numbers induces a bilinear form in $\mathbb{R}^{n+1}$ whose Gram matrix is $G_n$ defined in \eqref{def:Gn}. In this section, we explore orthogonal bases of $\mathbb{R}^{n+1}$ associated with a sequence of numbers and use it to construct bases of polynomials in $\Pi_n$ orthogonal with respect to its corresponding moment functional.

For $n\geqslant 0$, we will denote by $\bar{e}_0,\ldots,\bar{e}_{n}$ the columns of the identity matrix $I_{n+1}$, that is, $I_{n+1}=[\bar{e}_0 \ \bar{e}_1 \ \ldots \ \bar{e}_{n}]$. The set of column vectors $\mathcal{E}_{n}=\{\bar{e}_0,\ldots,\bar{e}_{n}\}$ is called the canonical basis for $\mathbb{R}^{n+1}$.

\begin{definition}\label{def:bilinearform}
Let $\{\mu_n\}_{n\geqslant 0}$ be a sequence of numbers. For $n\geqslant 0$, $\mathfrak{B}_n(\cdot,\cdot)$ denotes the bilinear form defined by
$$
\mathfrak{B}_n(\bar{u},\bar{v})\,=\,\bar{u}^{\top}\,G_n\,\bar{v}, \quad \forall \bar{u},\bar{v}\in \mathbb{R}^{n+1},
$$
and is called the bilinear form associated with $\{\mu_n\}_{n\geqslant 0}$ (relative to the canonical basis of $\mathbb{R}^{n+1}$).
\end{definition}

For $n\geqslant 0$, if $\det G_n>0$, then $\mathfrak{B}_n(\cdot,\cdot)$ is an inner product on $\mathbb{R}^{n+1}$. In this case, we can define the norm
$$
\|\bar{u}\|_n= \sqrt{\mathfrak{B}_n(\bar{u},\bar{u})}, \quad \forall \bar{u}\in \mathbb{R}^{n+1}.
$$

Of course, there are many orthogonal bases for $\mathbb{R}^{n+1}$ associated with $\mathfrak{B}_n$. However, we are interested in orthogonal bases obtained from the Cholesky factorization of $G_n$. Recall that if $\det G_n\ne 0$, there is an $(n+1)\times (n+1)$ unit lower triangular matrix (that is, with $1$'s in its main diagonal) $S_n^{-1}$ and an $(n+1)\times (n+1)$ non singular diagonal matrix $H_n$ such that
$$
G_n=S_n^{-1}\,H_n\,S_n^{-\top}, 
$$
where $S_n^{-\top}=(S_n^{-1})^{\top}$ (see Theorem 4.1.3 in \cite{GV13}). Moreover, this matrix factorization is unique. We can immediately observe that if we write the above identity as
$$
S_n\,G_n\,S_n^{\top}\,=\,H_n,
$$
then we have an orthogonality relation for the columns of $S_n^{\top}$ as we show in the following theorem.

\begin{theorem}\label{th:choleskyorth}
Let $G_n=S_n^{-1}\,H_n\,S_n^{-\top}$ be the Cholesky decomposition of $G_n$. Then the columns of $S_n^{\top}$ form an orthogonal basis for $\mathbb{R}^{n+1}$ with respect to the bilinear form $\mathfrak{B}_n$.
\end{theorem}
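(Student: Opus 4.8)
The plan is to read the orthogonality off directly from the defining identity of the Cholesky factorization after transposing it. Starting from $G_n=S_n^{-1}\,H_n\,S_n^{-\top}$ and multiplying on the left by $S_n$ and on the right by $S_n^{\top}$, I would first record the equivalent identity
$$
S_n\,G_n\,S_n^{\top}\,=\,H_n,
$$
which is in fact already displayed in the text immediately preceding the statement.

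Next I would name the columns. Writing $S_n^{\top}=[\bar{s}_0 \ \bar{s}_1 \ \cdots \ \bar{s}_n]$ for the columns of $S_n^{\top}$, the $(i,j)$ entry of the product $S_n\,G_n\,S_n^{\top}=(S_n^{\top})^{\top}\,G_n\,S_n^{\top}$ is precisely $\bar{s}_i^{\top}\,G_n\,\bar{s}_j=\mathfrak{B}_n(\bar{s}_i,\bar{s}_j)$. Since this product equals the diagonal matrix $H_n$, denoting its diagonal entries by $h_0,\ldots,h_n$ I obtain
$$
\mathfrak{B}_n(\bar{s}_i,\bar{s}_j)\,=\,h_i\,\delta_{i,j},\quad 0\leqslant i,j\leqslant n.
$$
In particular $\mathfrak{B}_n(\bar{s}_i,\bar{s}_j)=0$ for $i\ne j$, which is the asserted orthogonality, and $\mathfrak{B}_n(\bar{s}_i,\bar{s}_i)=h_i\ne 0$ because $H_n$ is nonsingular. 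To confirm that these orthogonal vectors actually constitute a \emph{basis} of $\mathbb{R}^{n+1}$, I would note that $S_n^{-1}$ is unit lower triangular and hence invertible, so $S_n$ and therefore $S_n^{\top}$ are invertible; thus the $n+1$ columns $\bar{s}_0,\ldots,\bar{s}_n$ are linearly independent and span $\mathbb{R}^{n+1}$.

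I do not anticipate a genuine obstacle: the whole argument amounts to the observation that the Cholesky identity, once transposed, is exactly the statement that the Gram matrix of the columns of $S_n^{\top}$ relative to $\mathfrak{B}_n$ is diagonal and nonsingular. The only point requiring care is the bookkeeping with transposes, namely correctly identifying that the $(i,j)$ entry of $S_n\,G_n\,S_n^{\top}$ is $\mathfrak{B}_n$ evaluated on the $i$-th and $j$-th \emph{columns} of $S_n^{\top}$ rather than on its rows. It is also worth stressing that no positive definiteness of $\mathfrak{B}_n$ is used here, only $\det G_n\ne 0$, so that the diagonal entries $h_i$ are guaranteed nonzero but not necessarily positive.
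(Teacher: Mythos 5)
Your proposal is correct and follows essentially the same route as the paper: both arguments transpose the Cholesky identity into $S_n\,G_n\,S_n^{\top}=H_n$ and read off the entries of this product as $\mathfrak{B}_n(\bar{s}_i,\bar{s}_j)$, concluding orthogonality from the diagonality of $H_n$. Your additional remarks on the basis property (invertibility of $S_n^{\top}$) and on not needing positive definiteness are sound and merely make explicit what the paper leaves implicit.
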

\begin{proof}
If $\bar{s}_0,\bar{s}_1,\ldots,\bar{s}_{n}$ are the columns of $S_n^{\top}$, then
$$
H_n=S_n\,G_n\,S_n^{\top}=\begin{bmatrix}
\bar{s}_0^{\top} \\[3pt] \bar{s}_1^{\top} \\ \vdots \\ \bar{s}_{n}^{\top}
\end{bmatrix}\,G_n\, \begin{bmatrix} \bar{s}_0 & \bar{s}_1 & \ldots & \bar{s}_{n} \end{bmatrix}.
$$
Since $H_n$ is a diagonal matrix and $G_n$ is the representation of the bilinear form $\mathfrak{B}_n$, it follows that 
$$
\mathfrak{B}_n(\bar{s}_i,\bar{s}_j)=0 \quad \text{for} \quad i\ne j,
$$
and
$$
\mathfrak{B}_n(\bar{s}_i,\bar{s}_i)=h_i, \quad i=0,1,\ldots,n,
$$
where $h_i$ is the $i$-th non zero entry of $H_n$, that is, $H_n=\text{diag}(h_0,h_1,\ldots,h_{n})$
\end{proof}

The following theorem shows that the Cholesky factorization of $G_n$ and $G_{n+1}$ are related. In fact, the Cholesky factorization of $G_{n+1}$ is obtained by bordering $S_n$ and $H_n$ with a new row and a new column. The proof relies heavily on the expressions for $G_{n}^{-1}$ and $\det G_n$ in terms of cofactors. If $(G_n)_{i,j}$ is the $(i,j)$-cofactor of $G_n$ with $i,j=0,1,\ldots,n$, then
$$
\det G_n = \sum_{k=0}^n\,\mu_{i+k}\,(G_n)_{i,k}, \quad G_n^{-1}=\frac{1}{\det G_n}\begin{bmatrix} (G_n)_{0,0} & (G_{n})_{1,0} & \ldots & (G_n)_{n,0} \\
(G_n)_{0,1} & (G_n)_{1,1} & \ldots & (G_n)_{n,1} \\
\vdots & \vdots & \ddots & \vdots \\
(G_n)_{0,n} & (G_n)_{1,n} & \ldots & (G_n)_{n,n}
\end{bmatrix}.
$$

Hereon, $\mathtt{0}$ will denote the zero matrix of appropriate size.

\begin{theorem}\label{th:choleskySn+1}
Let $n\geqslant 0$, and let $S_n^{-1}\,H_n\,S_n^{-\top}$ be the Cholesky factorization of $G_n$. Then, $G_{n+1}=S_{n+1}^{-1}\,H_{n+1}\,S_{n+1}^{-\top}$ with
$$
S_{n+1}^{\top}=\left[\begin{array}{c|c}S_n^{\top} & \hat{s}_{n+1} \\
\hline
\mathtt{0} & 1 \end{array} \right], \quad H_{n+1}=\left[\begin{array}{c|c}H_n & \mathtt{0}\\
\hline
\mathtt{0} & h_{n+1} \end{array} \right],
$$
where $[\hat{s}_{n+1}^{\top} \ 1 ]^{\top}$ is a vector given by the formal determinant
\begin{equation}\label{eq:choleskySn+1}
\begin{bmatrix}
\hat{s}_{n+1} \\
1
\end{bmatrix}= \frac{1}{\det G_n} \det\left[\begin{array}{ccc|c} & & & \mu_{n+1}\\
 & G_n & & \vdots \\
 & & & \mu_{2n+1} \\
 \hline 
\bar{e}_0 & \ldots &  \bar{e}_n & \bar{e}_{n+1}\end{array} \right],
\end{equation}
and
\begin{equation}\label{eq:hn+1}
h_{n+1}=\frac{\det G_{n+1}}{\det G_n}.
\end{equation}
\end{theorem}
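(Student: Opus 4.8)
The plan is to exploit the uniqueness of the Cholesky (i.e. $LDL^{\top}$) factorization of a matrix with nonzero leading principal minors, as recalled just before the statement: since the factors are unique, it suffices to exhibit one unit lower triangular matrix $S_{n+1}$ and one nonsingular diagonal matrix $H_{n+1}$ of the asserted bordered shapes satisfying $S_{n+1}\,G_{n+1}\,S_{n+1}^{\top}=H_{n+1}$, and then to identify them as the Cholesky factors. Writing $\bar a=(\mu_{n+1},\ldots,\mu_{2n+1})^{\top}$ for the new border, I would first record the block forms
\[
G_{n+1}=\left[\begin{array}{c|c} G_n & \bar a \\ \hline \bar a^{\top} & \mu_{2n+2}\end{array}\right],
\qquad
S_{n+1}=\left[\begin{array}{c|c} S_n & \mathtt 0 \\ \hline \hat s_{n+1}^{\top} & 1\end{array}\right],
\]
where $S_{n+1}$ is the transpose of the matrix $S_{n+1}^{\top}$ displayed in the statement. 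Since the inverse of a unit lower triangular matrix is again unit lower triangular, $S_n$ is unit lower triangular, and hence so is $S_{n+1}$; and $H_{n+1}$ is diagonal by construction. Thus everything reduces to the single block computation $S_{n+1}G_{n+1}S_{n+1}^{\top}=H_{n+1}$ (the factorization being genuine precisely when $\det G_{n+1}\neq 0$, so that $h_{n+1}\neq 0$).

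The crux is a determinant identity for the vector $v:=(\hat s_{n+1}^{\top},1)^{\top}$ defined by \eqref{eq:choleskySn+1}, namely
\[
G_{n+1}\,v=\frac{\det G_{n+1}}{\det G_n}\,\bar e_{n+1},
\]
where $\bar e_{n+1}$ is the last canonical vector of $\mathbb{R}^{n+2}$. To prove it I would use multilinearity of the determinant in its last row. Expanding the formal determinant \eqref{eq:choleskySn+1} along its last (vector) row shows that $(\det G_n)\,v_i$ equals the ordinary scalar determinant $D_i$ of the bordered matrix $[\,G_n\mid \bar a\,]$ capped by the row $\bar e_i^{\top}$. Hence
\[
(\det G_n)\,(G_{n+1}v)_i=\sum_{k=0}^{n+1}\mu_{i+k}\,D_k,
\]
and by linearity of the determinant in its last row this is the determinant of the matrix whose top block is $[\,G_n\mid\bar a\,]$ and whose last row is $\sum_k\mu_{i+k}\,\bar e_k^{\top}=(\mu_i,\ldots,\mu_{i+n+1})$, the $i$-th row of $G_{n+1}$. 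For $i\le n$ this row already appears among the first $n+1$ rows (which are exactly the rows of $[\,G_n\mid\bar a\,]$), so the determinant vanishes; for $i=n+1$ the matrix is precisely $G_{n+1}$. This yields the identity, and taking instead the row $\bar e_{n+1}^{\top}$ likewise shows the last entry of $v$ is $1$, confirming the normalization in \eqref{eq:choleskySn+1}.

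From this identity the theorem follows at once. Reading off the first $n+1$ coordinates of $G_{n+1}v=h_{n+1}\bar e_{n+1}$ gives $G_n\hat s_{n+1}+\bar a=\mathtt 0$, so that the off-diagonal blocks of $S_{n+1}G_{n+1}S_{n+1}^{\top}$, which equal $S_n(G_n\hat s_{n+1}+\bar a)$ and its transpose, vanish; the last coordinate gives $h_{n+1}=\bar a^{\top}\hat s_{n+1}+\mu_{2n+2}=\det G_{n+1}/\det G_n$, which is \eqref{eq:hn+1}. Since the top-left block of the product is $S_nG_nS_n^{\top}=H_n$ by hypothesis and its bottom-right entry is $v^{\top}G_{n+1}v=h_{n+1}$, we obtain $S_{n+1}G_{n+1}S_{n+1}^{\top}=H_{n+1}$; uniqueness of the factorization then identifies $S_{n+1}^{-1}H_{n+1}S_{n+1}^{-\top}$ as the Cholesky factorization of $G_{n+1}$.

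I expect the only real obstacle to be bookkeeping: tracking the cofactor signs and column orderings when unwinding the formal determinant \eqref{eq:choleskySn+1}, and making sure the multilinearity step is applied to the last row. Once the identity $G_{n+1}v=(\det G_{n+1}/\det G_n)\,\bar e_{n+1}$ is in hand, the block multiplication and the appeal to uniqueness are entirely routine.
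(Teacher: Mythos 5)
Your proposal is correct and takes essentially the same route as the paper: both reduce the theorem to verifying the block identity $S_{n+1}\,G_{n+1}\,S_{n+1}^{\top}=H_{n+1}$, establish that $G_n\,\hat{s}_{n+1}+(\mu_{n+1},\ldots,\mu_{2n+1})^{\top}=\mathtt{0}$ so the off-diagonal blocks vanish, and obtain $h_{n+1}=\det G_{n+1}/\det G_n$ from a cofactor expansion of $\det G_{n+1}$ along its last row. The only difference is one of direction: the paper solves the bordered linear system by Cramer's rule and identifies the solution with the formal determinant \eqref{eq:choleskySn+1}, whereas you start from \eqref{eq:choleskySn+1} and verify the single identity $G_{n+1}[\hat{s}_{n+1}^{\top}\ 1]^{\top}=(\det G_{n+1}/\det G_n)\,\bar{e}_{n+1}$ by multilinearity of the determinant in the last row --- which is precisely Cramer's rule unwound, packaged so that the linear system and the value of $h_{n+1}$ come out of one computation.
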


\begin{proof}
Observe that the columns of $[S_n^{\top} \ \mathtt{0}]^{\top}$ are linear combinations of the vectors $\bar{e}_0,\ldots,\bar{e}_n\in \mathbb{R}^{n+2}$ (that is, the first $n+1$ columns of $I_{n+2}$). Then $H_{n+1}=S_{n+1}\,G_{n+1}\,S_{n+1}^{\top}$ with $S_{n+1}^{\top}$ and $H_{n+1}$ as in \eqref{eq:choleskySn+1} and \eqref{eq:hn+1} if and only if
$$
\mathfrak{B}_{n+1}([\hat{s}_{n+1}^{\top} \ 1 ]^{\top},\bar{e}_i)=0, \quad i=0,1,\ldots,n,
$$
and 
$$
\mathfrak{B}_{n+1}([\hat{s}_{n+1}^{\top} \ 1 ]^{\top},[\hat{s}_{n+1}^{\top} \ 1 ]^{\top})=\frac{\det G_{n+1}}{\det G_n}.
$$
The above conditions can be written as a system of linear equations:
\begin{equation*}
G_n\,\hat{s}_{n+1}=-\begin{bmatrix}
\mu_{n+1} \\ \vdots \\ \mu_{2n+1}
\end{bmatrix}.
\end{equation*}
By Cramer's rule, 
\begin{equation}\label{eq:vectorshat}
\hat{s}_{n+1}=\frac{1}{\det G_n}\bigg((G_{n+1})_{n+1,0}\,\bar{e}_0+\cdots+(G_{n+1})_{n+1,n}\,\bar{e}_n \bigg),
\end{equation}
which is the expansion of the determinant \eqref{eq:choleskySn+1} across the last row.

 Moreover, we also have
\begin{equation}\label{eq:choleskyvector}
\hat{s}_{n+1}=-G_n^{-1}\begin{bmatrix}
\mu_{n+1} \\ \vdots \\ \mu_{2n+1}
\end{bmatrix},
\end{equation}
and
\begin{align*}
S_{n+1}\,G_{n+1}\,S_{n+1}^{\top}&= \left[\begin{array}{c|c}S_n & \mathtt{0}  \\
\hline
\\[-8pt]
\hat{s}_{n+1}^{\top} & 1 \end{array} \right]\,\left[\begin{array}{ccc|c}
 & & & \mu_{n+1} \\
 & G_{n} & &\vdots \\
 & & &\mu_{2n+1}\\
\hline 
\mu_{n+1} & \ldots & \mu_{2n+1} & \mu_{2n+2}
\end{array}\right]\,\left[\begin{array}{c|c}S_n^{\top} & \hat{s}_{n+1} \\
\hline
\mathtt{0} & 1 \end{array} \right]\\[5pt]
&= \left[\begin{array}{c|c} H_n & S_n\,B_n \\ \hline \\[-8pt]   C_n\,S_n^{\top} & h_{n+1}  \end{array}\right],
\end{align*}
where 
$$
B_n= G_n\,\hat{s}_{n+1}+\begin{bmatrix}
\mu_{n+1} \\ \vdots \\ \mu_{2n+1}
\end{bmatrix}, \quad C_n= \hat{s}_{n+1}^{\top}\,G_n+\begin{bmatrix}
\mu_{n+1} \\ \vdots \\ \mu_{2n+1}
\end{bmatrix}^{\top}, \quad h_{n+1} = [\hat{s}_{n+1}^{\top} \,|\, 1]\begin{bmatrix}
\mu_{n+1} \\ \vdots \\ \mu_{2n+1} \\ \hline \mu_{2n+2}
\end{bmatrix}.
$$
It follows from \eqref{eq:choleskyvector} that $B_n=\mathtt{0}$ and $C_n=\mathtt{0}$. Finally, using \eqref{eq:vectorshat} and the fact that $(G_{n+1})_{n+1,n+1}=\det G_n$, we obtain
$$
h_{n+1}=\frac{1}{\det G_n}\sum_{j=0}^{n+1} \mu_{n+1+j}(G_{n+1})_{n+1,j}= \frac{\det G_{n+1}}{\det G_n},
$$
which proves \eqref{eq:hn+1}.
\end{proof}

We can reformulate the above discussion in the context of $\Pi_n$ as follows. Given a sequence of numbers $\{\mu_n\}_{n\geqslant 0}$,  denote by $\mathbf{u}$ the moment functional defined by $\mu_n=\langle \mathbf{u}, x^n\rangle$. If $\det G_n\ne 0$, then 
$$
\langle \mathbf{u}, p\,q\rangle, \quad p,q\in \Pi_n,
$$
is a (non degenerate) bilinear form defined on $\Pi_n$. It is easy to see that its Gram matrix relative to the basis of monomials is $G_n$. Let $G_n=S_n^{-1}\,H_n\,S_n^{-\top}$ be the Cholesky decomposition of $G_n$ ($\det G_n\ne 0$) and let
$$
S_n=\begin{bmatrix}
1 & 0 & 0 & \ldots  & 0\\
s_{1,0} & 1 & 0 & \ldots  & 0\\
s_{2,0} & s_{2,1} & 1 & \ldots  & 0 \\
\vdots & \vdots &  \vdots & \ddots & \vdots \\
s_{n,0} & s_{n,1} & s_{n,2} & \ldots & 1
\end{bmatrix}
$$
be the explicit expression of the matrix $S_n$. It follows from Theorem \ref{th:choleskyorth} that the set of polynomials $\{P_0(x),P_1(x),\ldots, P_n(x)\}$ where
$$
P_k(x)=s_{k,0}+s_{k,1}x+\cdots+s_{k,k-1}x^{k-1}+x^k, \quad k=0,1,2,\ldots,n,
$$
form an orthogonal basis for $\Pi_n$ with respect to $\mathbf{u}$; that is, $\deg P_k = k$ and 
$$
\langle \mathbf{u}, P_j\,P_i\rangle = h_j\,\delta_{i,j}, \quad 0\leqslant i,j\leqslant n,
$$
with $h_j\ne 0$. Moreover, by Theorem \ref{th:choleskySn+1}  the polynomial
\begin{equation*}
P_{n+1}(x)= \frac{1}{\det G_n} \det\left[\begin{array}{ccc|c} & & & \mu_{n+1}\\
 & G_n & & \vdots \\
 & & & \mu_{2n+1} \\
 \hline 
1 & \ldots &  x^n & x^{n+1}\end{array} \right],
\end{equation*}
has degree exactly $n+1$, is orthogonal to every polynomial in $\Pi_n$ and
$$
h_{n+1}=\langle \mathbf{u}, P_{n+1}^2\rangle = \frac{\det G_{n+1}}{\det G_n}.
$$

%%%%%%%%%%%%%%%%%%%%%%%%%%%%%%%%%%%%%%%%%%%%%%%%%%%%%%%%%%%%%%%%%%%%%%%%%%%
\section{Characterizations of classical sequences}\label{characterizations}
%%%%%%%%%%%%%%%%%%%%%%%%%%%%%%%%%%%%%%%%%%%%%%%%%%%%%%%%%%%%%%%%%%%%%%%%%%%

Our goal for this section is to recast Theorem \ref{th:classical-char} in terms of classical sequences by shifting our point of view from the moment functional $\mathbf{u}$ to the Gram matrix $G_n$ associated with a bilinear form defined on $\Pi_n$. Recall that $G_n$ is a Hankel matrix (all of its antidiagonals are constant). Hence, we can say that this section deals with Hankel matrices with an additional structure: the entries of $G_n$ satisfy the recurrence relation \eqref{eq:ttr-moments}. In this way, we can extend the bilinear form to $\Pi_{n+1}$ by constructing a new Gram matrix $G_{n+1}$ by means of bordering $G_n$ with a new row and column whose entries are obtained with \eqref{eq:ttr-moments} from the entries of $G_n$. The resulting matrix $G_{n+1}$ will also be a Hankel matrix with the additional structure mentioned above. Consequently, it will be possible to prove by induction that the properties satisfied by $G_n$ are also satisfied by $G_{n+1}$.

The following matrices will play an important role in the sequel. 

\begin{definition}\label{def:matrixR}
Let $a,b,c,d$, and $e$ be real numbers such that $|a|+|b|+|c|>0$ and $n\,a+d\ne 0$ for $n\geqslant 0$. For $n\geqslant 1$, we define the $n\times (n+1)$ matrix $R_n$ recursively as follows:
$$
R_{n}=\left[ \begin{array}{ccccc|c} 
 & & & & & 0 \\
 & &  & R_{n-1} & & \vdots \\
 & & & & & 0\\
 \hline 
 0 & \ldots & 0 & (n-1)\,c & (n-1)\,b+e & (n-1)\,a+d
\end{array}\right], \quad R_1=\begin{bmatrix} e & d \end{bmatrix}.
$$
\end{definition}
Consider the differential operator $\mathcal{R}: \Pi\rightarrow \Pi$ defined as
$$
\mathcal{R}[p]=\phi(x)\,p'+\psi(x)\,p, \quad \forall p\in \Pi,
$$
where $\phi(x)=a\,x^2+b\,x+c$ and $\psi(x)=d\,x+e$. Observe that
$$
\mathcal{R}[x^n]\,=\,(n\,a+d)\,x^{n+1}+(n\,b+e)\,x^n+n\,c\,x^{n-1}, \quad n\geqslant 0.
$$
In this way, the matrix $R_{n+1}^{\top}$ is the matrix representation relative to the basis of monomials of $\mathcal{R}$ restricted to $\Pi_n$.

\bigskip

Let $\{\mu_n\}_{n\geqslant 0}$ be a sequence of real numbers. Define the vector of moments
$$
\mathtt{M}_n=\begin{bmatrix}\mu_0 & \mu_1 & \ldots & \mu_n\end{bmatrix}^{\top}, \quad n\geqslant 0.
$$
If $\{\mu_n\}_{n\geqslant 0}$ is pre-classical, then equation \eqref{eq:ttr-moments} for $\{\mu_0,\ldots,\mu_n\}$ can be written as
$$
R_n\,\mathtt{M}_n=\mathtt{0}.
$$
This implies that if $\mathbf{u}$ is the moment functional defined as $\mu_n:=\langle \mathbf{u}, x^n\rangle$, then by \eqref{eq:ttr-moments}, the following holds
$$
\left\langle \mathbf{u}, \mathcal{R}[x^n] \right\rangle =0, \quad n\geqslant 0.
$$

Now, consider the vector whose entries are the monomials in $\Pi_n$:
$$
\mathtt{X}_n=\begin{bmatrix}1 & x & \ldots & x^n \end{bmatrix}^{\top}.
$$
If we define the $n\times (n+1)$ matrix
$$
N_{n}=\left[ \begin{array}{c|c} 
N_{n-1} & \mathtt{0} \\
  \hline
\mathtt{0} & n
\end{array}\right], \quad N_1=\begin{bmatrix}0 & 1 \end{bmatrix}, \quad N_0 = 0,
$$
then
$$
\frac{d}{dx}\mathtt{X}_n\,=\,N_n^{\top}\,\mathtt{X}_{n-1}.
$$

\subsection{Bochner-type characterization}

For $n\geqslant 0$, let us introduce the $(n+1)\times (n+1)$ matrix $\mathtt{D}_n$ defined as
$$
    \mathtt{D}_0=0, \qquad \mathtt{D}_n:=R_n^{\top}N_n, \quad n\geqslant 1.
$$
It is possible to express $\mathtt{D}_n$ recursively as follows:
\begin{align*}
\mathtt{D}_{n}=\left[ \begin{array}{ccc|c} 
& & & 0 \\
&  & & \vdots \\
 & \mathtt{D}_{n-1} & & 0\\
 & & & n\,(n-1)\,c\\[6pt]
 & & & n\,((n-1)\,b+e)\\[6pt]
 \hline 
 0 & \ldots & 0 &   n\,((n-1)\,a+d)
\end{array}\right], \quad n\geqslant 1.
\end{align*}

Observe that $\mathtt{D}_n$ is the matrix representation relative the basis of monomials of the operator
\begin{equation}\label{def:operatorD}
\mathcal{D}[p]:=\mathcal{R}\left[\frac{d}{dx}p \right]=\phi(x)\,p''+\psi(x)\,p', \quad \forall p\in \Pi,
\end{equation}
restricted to $\Pi_n$.

In the following theorem, we show that $\mathtt{D}_n$ is a self-adjoint matrix with respect to the bilinear form $\mathfrak{B}_n$ given in Definition \ref{def:bilinearform}.

\begin{theorem}\label{th:selfadjoint}
    Let $\{\mu_n\}_{n\geqslant 0}$ be a classical sequence satisfying \eqref{eq:ttr-moments}, and let $\mathfrak{B}_n$ be the operator defined in Definition \ref{def:bilinearform}. Then, for $n\geqslant 0$, the matrix $\mathtt{D}_n$ satisfies
    \begin{equation}\label{eq:selfadjoint}
    \mathfrak{B}_n(\mathtt{D}_n\bar{u}, \bar{v})=\mathfrak{B}_n(\bar{u}, \mathtt{D}_n\bar{v}), \quad \bar{u},\bar{v}\in \mathbb{R}^{n+1}.
    \end{equation}
\end{theorem}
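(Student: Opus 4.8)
The plan is to recognize \eqref{eq:selfadjoint} as the matrix reformulation of the classical self-adjointness of the second order operator $\mathcal{D}[p]=\phi\,p''+\psi\,p'$ with respect to $\mathbf{u}$. Writing $\mathfrak{B}_n(\bar u,\bar v)=\bar u^{\top}G_n\bar v$ and using that $\mathtt{D}_n$ is the matrix of $\mathcal{D}$ relative to the monomial basis while $G_n$ is the Gram matrix of the form $(p,q)\mapsto\langle\mathbf{u},p\,q\rangle$ on $\Pi_n$, I would first reduce \eqref{eq:selfadjoint} to the single matrix identity $\mathtt{D}_n^{\top}G_n=G_n\,\mathtt{D}_n$; equivalently, that $G_n\,\mathtt{D}_n$ is symmetric, since $G_n^{\top}=G_n$. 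By bilinearity it suffices to test this on the canonical basis, and computing the $(i,j)$ entry of $G_n\,\mathtt{D}_n$ as $\langle\mathbf{u},x^i\,\mathcal{D}[x^j]\rangle$ shows that \eqref{eq:selfadjoint} is equivalent to
\[
\langle\mathbf{u},\mathcal{D}[p]\,q\rangle=\langle\mathbf{u},p\,\mathcal{D}[q]\rangle,\qquad p,q\in\Pi_n .
\]

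The second step is a direct computation of the difference. Since $\mathcal{D}[p]\,q-p\,\mathcal{D}[q]=\phi\,(p''q-p\,q'')+\psi\,(p'q-p\,q')$, I would introduce $W=p'q-p\,q'$ and use $W'=p''q-p\,q''$ to rewrite the integrand as $\phi\,W'+\psi\,W=\mathcal{R}[W]$. Hence the difference equals $\langle\mathbf{u},\mathcal{R}[W]\rangle$.

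The final step invokes the pre-classical structure. By Theorem \ref{th:pearson}, $\mathbf{u}$ satisfies the Pearson equation $D(\phi\,\mathbf{u})=\psi\,\mathbf{u}$, so for any $r\in\Pi$,
\[
\langle\mathbf{u},\mathcal{R}[r]\rangle=\langle\phi\,\mathbf{u},r'\rangle+\langle\psi\,\mathbf{u},r\rangle=-\langle D(\phi\,\mathbf{u}),r\rangle+\langle\psi\,\mathbf{u},r\rangle=0 .
\]
Applying this with $r=W$ gives $\langle\mathbf{u},\mathcal{R}[W]\rangle=0$, which closes the argument. Equivalently, one may quote directly $\langle\mathbf{u},\mathcal{R}[x^m]\rangle=0$ for all $m\geqslant0$, which is the functional form of the recurrence \eqref{eq:ttr-moments}.

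The point to watch is the bookkeeping of degrees, which is what makes the finite statement genuinely equivalent to the functional one: $W$ has degree at most $2n-1$, so $\mathcal{R}[W]$ has degree at most $2n$, and all moments appearing are among $\mu_0,\dots,\mu_{2n}$, precisely the entries of $G_n$. Thus no moment outside the block defining $G_n$ is needed, and the operator computation really does encode the matrix identity. A purely matrix-theoretic alternative would establish $G_n\,\mathtt{D}_n=\mathtt{D}_n^{\top}G_n$ by induction on $n$ using the recursive, bordered forms of $G_n$, $R_n$, $N_n$ and $\mathtt{D}_n$, but the operator computation above is shorter and reuses Theorem \ref{th:pearson}.
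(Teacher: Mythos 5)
Your proof is correct, but it is genuinely different from the one in the paper. The paper stays entirely within its matrix paradigm: it proves $\mathtt{D}_n^{\top}G_n = G_n\mathtt{D}_n$ by induction on $n$, using the bordered block structure of $G_{k+1}$ and $\mathtt{D}_{k+1}$ and verifying entry by entry that the off-diagonal blocks $\bar{x}_k$ and $\bar{y}_k$ agree, each entry reducing to an instance of the recurrence \eqref{eq:ttr-moments}. You instead reverse the logical order the paper follows: the paper deduces the functional identity $\langle\mathbf{u},\mathcal{D}[p]\,q\rangle=\langle\mathbf{u},p\,\mathcal{D}[q]\rangle$ as a consequence of the matrix theorem, whereas you prove that functional identity first --- via the Wronskian trick $W=p'q-pq'$, the observation $\mathcal{D}[p]\,q-p\,\mathcal{D}[q]=\mathcal{R}[W]$, and the vanishing $\langle\mathbf{u},\mathcal{R}[r]\rangle=0$ coming from the Pearson equation (Theorem \ref{th:pearson}) or directly from \eqref{eq:ttr-moments} --- and then read off the matrix identity through the correspondence $(G_n\mathtt{D}_n)_{i,j}=\langle\mathbf{u},x^i\,\mathcal{D}[x^j]\rangle$. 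There is no circularity, since Theorem \ref{th:pearson} is independent of this theorem. Your degree bookkeeping ($\deg W\leqslant 2n-1$, so $\deg\mathcal{R}[W]\leqslant 2n$ and only $\mu_0,\dots,\mu_{2n}$ and the recurrence instances up to index $2n-1$ are used) is exactly the point needed to make the finite matrix statement follow from the functional computation, and you state it. What each approach buys: yours is shorter, conceptual, and makes transparent that only the pre-classical hypothesis (not $\det G_n\ne 0$) is used; the paper's induction is more laborious but self-contained at the matrix level, which fits its goal of demonstrating that the bordering technique alone suffices to propagate structural properties from $G_n$ to $G_{n+1}$.
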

\begin{proof}
Observe that proving \eqref{eq:selfadjoint} is equivalent to proving
$$
\mathtt{D}_n^{\top}\,G_n=G_n\,\mathtt{D}_n.
$$
We prove this for $n\geqslant 0$ by induction.

It is obvious that $\mathtt{D}_0^{\top}\,G_0=G_0\,\mathtt{D}_0$. We also prove the case $n=1$ for the sake of clarity since it is the first non trivial case. We compute
$$
\mathtt{D}_1^{\top}\,G_1=\left[\begin{array}{c|c}
0 & 0 \\
\hline
e & d
\end{array}\right]\left[\begin{array}{c|c}
\mu_0 & \mu_1 \\
\hline
\mu_1 & \mu_2
\end{array}\right]=\left[\begin{array}{c|c}
\mathtt{D}_0^{\top} & 0 \\
\hline
e & d
\end{array}\right]\left[\begin{array}{c|c}
G_0 & \mu_1 \\
\hline
\mu_1 & \mu_2
\end{array}\right].
$$
Again, multiplying by blocks, we get
$$
\mathtt{D}_1^{\top}\,G_1=\left[\begin{array}{c|c}
\mathtt{D}_0^{\top}\,G_0 & \mathtt{D}_0^{\top}\,\mu_1 \\
\hline
d\,\mu_1+e\,\mu_0 & e\,\mu_1+d\,\mu_2
\end{array}\right]=\left[\begin{array}{c|c}
G_0\,\mathtt{D}_0^{\top} & 0 \\
\hline
d\,\mu_1+e\,\mu_0 & e\,\mu_1+d\,\mu_2
\end{array}\right].
$$
Since $\{\mu_n\}_{n\geqslant 0}$ is a classical sequence, by condition \eqref{eq:ttr-moments} with $n=0$, we have
$$
\mu_1\,\mathtt{D}_0=0=d\,\mu_1+e\,\mu_0.
$$
Therefore, we can write
$$
\mathtt{D}_1^{\top}\,G_1=\left[\begin{array}{c|c}
G_0\,\mathtt{D}_0^{\top} & e\,\mu_0+d\,\mu_1 \\
\hline
\mu_1\,\mathtt{D}_0 & e\,\mu_1+d\,\mu_2
\end{array}\right]=\left[\begin{array}{c|c}
G_0 & \mu_1 \\
\hline
\mu_1 & \mu_2
\end{array}\right]\left[\begin{array}{c|c}
\mathtt{D}_0 & e \\
\hline
0 & d
\end{array}\right]=G_1\,\mathtt{D}_1.
$$
This proves $\mathtt{D}_1^{\top}\,G_1=G_1\,\mathtt{D}_1$.

Now, suppose that $\mathtt{D}_k^{\top}\,G_k=G_k\,\mathtt{D}_k$ holds for some $k\geqslant 0$. We compute
\begin{align*}
&\mathtt{D}_{k+1}^{\top}\,G_{k+1}\\
&=\left[ \begin{array}{ccccc|c} 
 & & & & & 0 \\
 & &  & \mathtt{D}_{k}^{\top} & & \vdots \\
 & & & & & 0\\
 \hline 
 0 & \ldots & 0 & (k+1)\,k\,c & (k+1)\,(k\,b+e) & (k+1)\,(k\,a+d)
\end{array}\right]\\
&\qquad \times \left[\begin{array}{ccc|c}
 & & & \mu_{k+1} \\
 & G_{k} & &\vdots \\
 & & &\mu_{2k+1}\\
\hline 
\mu_{k+1} & \ldots & \mu_{2k+1} & \mu_{2k+2}
\end{array}\right].
\end{align*}
Multiplying by blocks, we get
$$
\mathtt{D}_{k+1}^{\top}\,G_{k+1}=\left[ \begin{array}{c|c}
\mathtt{D}_{k}^{\top}\,G_k & \bar{x}_k\\[3pt]
\hline
\\[-6pt]
\bar{y}_k^{\top} & \bar{z}_k
\end{array}
\right],
$$
where
$$
\bar{x}_k=\mathtt{D}_k^{\top}\begin{bmatrix}\mu_{k+1} \\ \vdots \\ \mu_{2k+1} \end{bmatrix}, 
$$
$$
\bar{y}_k^{\top}= \left[ (k+1)\,k\,c \ \ \ (k+1)\,(k\,b+e) \ \ \ (k+1)\,(k\,a+d)
\right]\left[\begin{array}{ccc}
\mu_{k-1} & \ldots & \mu_{2k-1}  \\
\mu_k & \ldots & \mu_{2k} \\
\hline 
\mu_{k+1} & \ldots & \mu_{2k+1} 
\end{array}\right],
$$
and
$$
\bar{z}_k=(k+1)\,k\,c\,\mu_{2k}+(k+1)\,(k\,b+e)\,\mu_{2k+1}+(k+1)\,(k\,a+d)\,\mu_{2k+2}=\bar{z}_k^{\top}.
$$
Since by our induction hypothesis we have
$$
\mathtt{D}_{k+1}^{\top}\,G_{k+1}=\left[ \begin{array}{c|c}
G_k\,\mathtt{D}_{k} & \bar{x}_k\\[3pt]
\hline
\\[-6pt]
\bar{y}_k^{\top} & \bar{z}_k^{\top}
\end{array}
\right],
$$
it is clear that our main efforts should focus on showing that $\bar{x}_k=\bar{y}_k$. Observe that
$$
\bar{x}_k=\left[ \begin{array}{ccccc|c} 
 & & & & & 0 \\
 & &  & \mathtt{D}_{k-1}^{\top} & & \vdots \\
 & & & & & 0\\
 \hline 
 0 & \ldots & 0 & k\,(k-1)\,c & k\,((k-1)\,b+e) & k\,((k-1)\,a+d)
\end{array}\right]\begin{bmatrix}\mu_{k+1} \\ \vdots \\ \mu_{2k+1} \end{bmatrix}.
$$
Then, the $k$-th entry of $\bar{x}_k$ is
$$
k\,(k-1)\,c\,\mu_{2k-1}+k\,((k-1)\,b+e)\,\mu_{2k}+k\,((k-1)\,a+d)\,\mu_{2k+1},
$$
while the $k$-th entry of $\bar{y}_k$ is
$$
(k+1)\,k\,c\,\mu_{2k-1}+(k+1)\,(k\,b+e)\,\mu_{2k}+(k+1)\,(k\,a+d)\,\mu_{2k+1},
$$
which means that the $k$-th entry of $\bar{y}_k-\bar{x}_k$ is
$$
(2k\,a+d)\,\mu_{2k+1}+(2k\,b+e)\,\mu_{2k}+2k\,c\,\mu_{2k-1}=0,
$$
where we have used condition \eqref{eq:ttr-moments}. 

Now, noticing that the last row of $\mathtt{D}_{k-1}^{\top}$ is
$$
\begin{bmatrix}
0 & \ldots & 0 & (k-1)\,(k-2)\,c & (k-1)\,((k-2)\,b+2) & (k-1)\,((k-2)\,a+d)
\end{bmatrix},
$$
we have that the $(k-1)$-th entry of $\bar{x}_k$ is
$$
(k-1)\,(k-2)\,c\,\mu_{2k-2}+(k-1)\,((k-2)\,b+2)\,\mu_{2k-1}+(k-1)\,((k-2)\,a+d)\,\mu_{2k},
$$
while the $(k-1)$-th entry of $\bar{y}_k$ is
$$
(k+1)\,k\,c\,\mu_{2k-2}+(k+1)\,(k\,b+e)\,\mu_{2k-1}+(k+1)\,(k\,a+d)\,\mu_{2k}.
$$
Then the $(k-1)$-th entry of $\bar{y}_k-\bar{x}_k$ is
$$
2\bigg[((2k-1)\,a+d)\,\mu_{2k}+((2k-1)\,b+e)\,\mu_{2k-1}+(2k-1)\,c\,\mu_{2k-2}\bigg]=0,
$$
where, again, we have used \eqref{eq:ttr-moments} with $n=k-1$. 

If we continue in this way, then for $i=0,1,\ldots,k$, the $(i+1)$-th entry of $\bar{y}_k-\bar{x}_k$ is
$$
(k+1-i)\bigg[((k+i)\,a+d)\mu_{k+1+i}+((k+i)\,b+e)\,\mu_{k+i}+(k+i)\,c\,\mu_{k-1+i} \bigg]=0.
$$
This means that $\bar{x}_k=\bar{y}_k$ and, consequently,
$$
\mathtt{D}_{k+1}^{\top}\,G_{k+1}=\left[ \begin{array}{c|c}
G_k\,\mathtt{D}_{k} & \bar{y}_k\\[3pt]
\hline
\\[-6pt]
\bar{x}_k^{\top} & \bar{z}_k^{\top}
\end{array}
\right] = G_{k+1}\,\mathtt{D}_{k+1},
$$
which proves that $\mathtt{D}_n^{\top}\,G_n=G_n\,\mathtt{D}_n$ holds for $n\geqslant 0$.
\end{proof}

Let us now consider the eigenvectors of $\mathtt{D}_n$. Suppose that $\bar{u},\bar{v}\in \mathbb{R}^{n+1}$ are eigenvectors corresponding to distinct eigenvalues $\lambda$ and $\overline{\lambda}$, respectively. Then,
$$
\lambda\,\mathfrak{B}_n(\bar{u}, \bar{v})=\mathfrak{B}_n(\lambda\,\bar{u},\bar{v})=\mathfrak{B}_n(\mathtt{D}_n\bar{u},\bar{v})=\mathfrak{B}_n(\bar{u},\mathtt{D}_n\bar{v})=\mathfrak{B}_n(\bar{u},\overline{\lambda}\,\bar{v})=\overline{\lambda}\,\mathfrak{B}_n(\bar{u},\bar{v}),
$$
which implies
$$
(\lambda-\overline{\lambda})\,\mathfrak{B}_n(\bar{u},\bar{v})=0.
$$
Since $\lambda\ne \overline{\lambda}$, we must have that $\mathfrak{B}_n(\bar{u},\bar{v})=0$ and, therefore, $\bar{u}$ and $\bar{v}$ are orthogonal with respect to $\mathfrak{B}_n$. We have already encountered the eigenvectors of $\mathtt{D}_n$, as the following theorem shows. This theorem is, in fact, a characterization of classical sequences in terms of $\mathtt{D}_n$.

\begin{theorem}\label{th:eighenvectors}
For $n\geqslant 0$, let $S_n^{-1}\,H_n\,S_n^{-\top}$ be the Cholesky factorization of $G_n$ and let $\bar{s}_{n,0},\bar{s}_{n,1},\ldots,\bar{s}_{n,n}$ denote the columns of $S_n^{\top}$. Then $\{\mu_n\}_{n\geqslant 0}$ is a classical sequence if and only if
$$
\mathtt{D}_n\,\bar{s}_{n,j}\,=\,\lambda_j\,\bar{s}_{n,j}, \quad n\geqslant 0, \quad 0\leqslant j \leqslant n,
$$
where $\lambda_j=j\,[(j-1)\,a+d]$.
\end{theorem}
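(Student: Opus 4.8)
The plan is to translate the eigenvector equation into a statement about polynomials and then handle the two implications separately. Recall from the reformulation following Theorem~\ref{th:choleskySn+1} that the column $\bar{s}_{n,j}$ is exactly the coefficient vector, in the monomial basis, of the $j$-th monic orthogonal polynomial $P_j$, and that $\mathtt{D}_n$ represents the operator $\mathcal{D}[p]=\phi\,p''+\psi\,p'$ of \eqref{def:operatorD} on $\Pi_n$. Consequently $\mathtt{D}_n\,\bar{s}_{n,j}=\lambda_j\,\bar{s}_{n,j}$ is equivalent to the Bochner-type identity $\mathcal{D}[P_j]=\lambda_j\,P_j$, which is \eqref{bochner-diffeq}. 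Since $\mathcal{D}[x^j]$ has leading term $(a\,j(j-1)+d\,j)\,x^j=\lambda_j\,x^j$, the scalar $\lambda_j$ is forced to be the leading coefficient of $\mathcal{D}[P_j]$; I would record this observation first, as it pins down the eigenvalue in both directions.

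For the forward implication I would assume $\{\mu_n\}$ classical and invoke Theorem~\ref{th:selfadjoint}, so that $\mathtt{D}_n$ is self-adjoint for $\mathfrak{B}_n$. The key structural fact is that $\mathcal{D}$ preserves degrees, whence $\mathcal{D}[P_j]\in\Pi_j=\operatorname{span}\{P_0,\dots,P_j\}$ and one may write $\mathcal{D}[P_j]=\sum_{i=0}^{j}c_{j,i}P_i$. For $i<j$, self-adjointness gives $\mathfrak{B}_n(\mathtt{D}_n\bar{s}_{n,j},\bar{s}_{n,i})=\mathfrak{B}_n(\bar{s}_{n,j},\mathtt{D}_n\bar{s}_{n,i})$, that is $\langle\mathbf{u},\mathcal{D}[P_j]\,P_i\rangle=\langle\mathbf{u},P_j\,\mathcal{D}[P_i]\rangle$; but $\deg\mathcal{D}[P_i]\le i<j$, so the right-hand side vanishes because $P_j\perp\Pi_{j-1}$. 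Hence $c_{j,i}=\langle\mathbf{u},\mathcal{D}[P_j]P_i\rangle/h_i=0$ for $i<j$, leaving $\mathcal{D}[P_j]=c_{j,j}P_j$, and matching leading coefficients yields $c_{j,j}=\lambda_j$. Equivalently, at the matrix level $A:=S_n^{-\top}\mathtt{D}_nS_n^{\top}$ is upper triangular because $\mathcal{D}$ preserves degrees, while $H_nA$ is symmetric because of self-adjointness, and an upper-triangular matrix that becomes symmetric after left multiplication by an invertible diagonal matrix must be diagonal.

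For the converse I would start from the eigenvector equation and first deduce self-adjointness at every level: since $\bar{s}_{n,0},\dots,\bar{s}_{n,n}$ form a $\mathfrak{B}_n$-orthogonal basis by Theorem~\ref{th:choleskyorth} and each is an eigenvector of $\mathtt{D}_n$, a two-line computation gives $\mathfrak{B}_n(\mathtt{D}_n\bar{s}_{n,i},\bar{s}_{n,j})=\mathfrak{B}_n(\bar{s}_{n,i},\mathtt{D}_n\bar{s}_{n,j})$ for all $i,j$, hence $\mathtt{D}_n^{\top}G_n=G_n\mathtt{D}_n$. I would then run the block computation from the proof of Theorem~\ref{th:selfadjoint} backwards: the identity $\mathtt{D}_{n+1}^{\top}G_{n+1}=\left[\begin{smallmatrix}\mathtt{D}_n^{\top}G_n & \bar{x}_n\\ \bar{y}_n^{\top}& \bar{z}_n\end{smallmatrix}\right]$ is purely algebraic and holds for any sequence, so the symmetry of $\mathtt{D}_{n+1}^{\top}G_{n+1}$ forces the off-diagonal blocks to agree, $\bar{x}_n=\bar{y}_n$. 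The first entry of $\bar{y}_n-\bar{x}_n$ equals $(n+1)\big[(n\,a+d)\mu_{n+1}+(n\,b+e)\mu_n+n\,c\,\mu_{n-1}\big]$, and since $n+1\ne0$ this is exactly \eqref{eq:ttr-moments} at index $n$. As $n$ is arbitrary, $\{\mu_n\}$ is pre-classical; and because the hypothesis presupposes $\det G_n\ne0$ (so that the $\bar{s}_{n,j}$ exist), it is classical.

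The routine parts—passing between coefficient vectors and polynomials and the leading-coefficient bookkeeping—are straightforward. The step that requires genuine care is the converse: recognizing that the block decomposition of $\mathtt{D}_{n+1}^{\top}G_{n+1}$ used to prove Theorem~\ref{th:selfadjoint} is an identity valid for an arbitrary sequence, so that it can be read in reverse, and that self-adjointness (now a hypothesis rather than a conclusion) recovers the single scalar recurrence \eqref{eq:ttr-moments} for each $n$ from the vanishing of the first component of $\bar{y}_n-\bar{x}_n$. I expect this reversal to be the main obstacle, since one must be sure that every index $n\geqslant0$ is reached and that no additional hypotheses sneak into the block argument.
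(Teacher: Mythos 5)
Your proposal is correct, and both of its directions rest on the same key ingredient as the paper's proof, namely the self-adjointness identity $\mathtt{D}_n^{\top}\,G_n=G_n\,\mathtt{D}_n$ of Theorem \ref{th:selfadjoint}; the genuine difference is in how the forward implication is organized. The paper proceeds by induction on $n$: the bordering structure of Theorem \ref{th:choleskySn+1} carries the eigenvector property of $\bar{s}_{k,j}$, $j\leqslant k$, up to level $k+1$ for free, and the orthogonal-expansion argument (self-adjointness kills the coefficients $a_{k+1,j}$, $j\leqslant k$; the last entry pins down $a_{k+1,k+1}=\lambda_{k+1}$) is run only for the new column $\bar{s}_{k+1,k+1}$. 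You dispense with the induction altogether by exploiting that $\mathcal{D}$ preserves degree, so that $A:=S_n^{-\top}\mathtt{D}_n\,S_n^{\top}$ is upper triangular, while self-adjointness makes $H_n\,A$ symmetric, forcing $A$ to be diagonal; this is the same mechanism applied to all columns simultaneously, and your leading-coefficient observation $\mathcal{D}[x^j]=\lambda_j x^j+\text{lower order}$ correctly identifies the diagonal entries. Your version is cleaner and equally rigorous; what the paper's induction buys is mainly an illustration of the bordering paradigm the authors are promoting. Your converse is essentially the paper's: both deduce $\mathtt{D}_n^{\top}G_n=G_n\mathtt{D}_n$ from the eigenvector equation via $(\lambda_j-\lambda_k)\,h_j\,\delta_{j,k}=0$ and invertibility of $S_n$, and then extract \eqref{eq:ttr-moments} from this commutation identity. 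The only divergence is which entries are read off: the paper reads the whole first column, using $\mathtt{D}_n=R_n^{\top}N_n$ to get $N_n^{\top}R_n\,\mathtt{M}_n=\mathtt{0}$ (indices $0,\ldots,n-1$ of the recurrence at level $n$), whereas you compare the corner entries $(n+1,0)$ and $(0,n+1)$ of the bordered block decomposition of $\mathtt{D}_{n+1}^{\top}G_{n+1}$ (index $n$ at level $n+1$); your computation of the first entry of $\bar{y}_n-\bar{x}_n$ is correct (the first entry of $\bar{x}_n$ vanishes because the first column of $\mathtt{D}_n$ is zero), and since the hypothesis holds for every $n\geqslant 0$, every index of \eqref{eq:ttr-moments} is indeed reached, with classicality then following from $\det G_n\ne 0$ exactly as in the paper.
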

\begin{proof}
Suppose that $\{\mu_n\}_{n\geqslant 0}$ is a classical sequence. For $n=0$, then $\mathtt{D}_0=0$ and $\bar{s}_{0,0}=1$. Thus it is obvious that $\mathtt{D}_0\,\bar{s}_{0,0}=\lambda_0\,\bar{s}_{0,0}$.

For $n=1$, 
$$
\mathtt{D}_1=\left[\begin{array}{c|c}
\mathtt{D}_0 & e \\
\hline 
0 & d
\end{array}\right], \quad \bar{s}_{1,0}=\left[ \begin{array}{c} \bar{s}_{0,0} \\ \hline 0 \end{array}\right].
$$
Then, multiplying by blocks, we have
$$
\mathtt{D}_1\,\bar{s}_{1,0}=\left[\begin{array}{c} \mathtt{D}_0\,\bar{s}_{0,0} \\ \hline 0 \end{array}\right]=\lambda_0\,\left[ \begin{array}{c} \bar{s}_{0,0} \\ \hline 0 \end{array}\right]=\lambda_0\,\bar{s}_{1,0}.
$$
Since $S_1^{\top}$ is upper triangular with $1$'s on its diagonal, then $\{\bar{s}_{1,0},\bar{s}_{1,1}\}$ constitutes a basis for $\mathbb{R}^2$. Then we can write
$$
\mathtt{D}_1\,\bar{s}_{1,1}=a_{1,1}\,\bar{s}_{1,1}+a_{1,0}\,\bar{s}_{1,0},
$$
for some constants $a_{1,1}$ and $a_{1,0}$. Using the orthogonality of the columns of $S_1^{\top}$ with respect to $\mathfrak{B}_1$ and Theorem \ref{th:selfadjoint}, we obtain
$$
a_{1,j}=\frac{\mathfrak{B}_1(\mathtt{D}_1\,\bar{s}_{1,1},\bar{s}_{1,j})}{h_j}=\frac{\mathfrak{B}_1(\bar{s}_{1,1},\mathtt{D}_1\,\bar{s}_{1,j})}{h_j}, \quad j=0,1.
$$
It follows that $a_{1,0}=0$ and, thus,
$$
\mathtt{D}_1\,\bar{s}_{1,1}=a_{1,1}\,\bar{s}_{1,1}=\begin{bmatrix}
    * \\ a_{1,1}
\end{bmatrix},
$$
where we have taken into account that $\bar{s}_{1,1}=[* \ 1]^{\top}$ (the value of the entry denoted by $*$ has no relevance). If we multiply by blocks, we get
$$
\mathtt{D}_1\,\bar{s}_{1,1}=\begin{bmatrix}
    * \\ d
\end{bmatrix},
$$
which implies that $a_{1,1}=d=\lambda_1$.

Now, suppose that 
$$
\mathtt{D}_k\,\bar{s}_{k,j}\,=\,\lambda_j\,\bar{s}_{k,j}, \quad 0\leqslant j \leqslant k,
$$
holds for some $k\geqslant 0$. Recall that
\begin{align*}
\mathtt{D}_{k+1}=\left[ \begin{array}{ccc|c} 
& & & 0 \\
&  & & \vdots \\
 & \mathtt{D}_{k} & & 0\\
 & & & (k+1)\,k\,c\\[6pt]
 & & & (k+1)\,(k\,b+e)\\[6pt]
 \hline 
 0 & \ldots & 0 &   (k+1)\,(k\,a+d)
\end{array}\right],
\end{align*}
and that, by Theorem \eqref{th:choleskySn+1},
$$
\bar{s}_{k+1,j}=\left[\begin{array}{c} 
\bar{s}_{k,j} \\
\hline 
0
\end{array}\right], \quad 0\leqslant j \leqslant k, \quad \text{and} \quad \bar{s}_{k+1,k+1}=\left[\begin{array}{c} 
* \\
\hline 
1
\end{array}\right],
$$
where the values of the entries denoted by $*$ are not relevant here. Multiplying by blocks and using the induction hypothesis, we get
$$
\mathtt{D}_{k+1}\,\bar{s}_{k+1,j}=\left[\begin{array}{c} 
\mathtt{D}_{k}\,\bar{s}_{k,j} \\
\hline 
0
\end{array}\right]=\lambda_j\,\bar{s}_{k+1,j}, \quad 0\leqslant j \leqslant k.
$$
Since $S_{k+1}^{\top}$ is an upper triangular matrix with $1$'s on its diagonal, its columns constitute a basis for $\mathbb{R}^{k+1}$. Then, we can write
$$
\mathtt{D}_{k+1}\,\bar{s}_{k+1,k+1}=\sum_{j=0}^{k+1}\,a_{k+1,j}\,\bar{s}_{k+1,j},
$$
where, by the orthogonality of the columns of $S_{k+1}^{\top}$ with respect to $\mathfrak{B}_{k+1}$ and Theorem \ref{th:selfadjoint}, we have
\begin{align*}
a_{k+1,j}&=\frac{\mathfrak{B}_{k+1}(\mathtt{D}_{k+1}\,\bar{s}_{k+1,k+1},\bar{s}_{k+1,j})}{h_j}\\
&=\frac{\mathfrak{B}_{k+1}(\bar{s}_{k+1,k+1},\mathtt{D}_{k+1}\,\bar{s}_{k+1,j})}{h_j}, \quad 0\leqslant j \leqslant k+1.
\end{align*}
It follows that
\begin{align*}
a_{k+1,j}=\,\lambda_j\,\frac{\mathfrak{B}_{k+1}(\bar{s}_{k+1,k+1},\bar{s}_{k+1,j})}{h_j}\,=\,0, \quad 0\leqslant j \leqslant k,
\end{align*}
and, consequently,
$$
\mathtt{D}_{k+1}\,\bar{s}_{k+1,k+1}=a_{k+1,k+1}\,\bar{s}_{k+1,k+1}=\left[\begin{array}{c} 
* \\
\hline 
a_{k+1,k+1}
\end{array}\right].
$$
Moreover, if we multiply by blocks, we get
$$
\mathtt{D}_{k+1}\,\bar{s}_{k+1,k+1}=\left[\begin{array}{c} 
* \\
\hline 
\lambda_{k+1}
\end{array}\right],
$$
which implies that $a_{k+1,k+1}=\lambda_{k+1}$. Then the sufficient condition is proved by the principle of induction.

\bigskip

Conversely, if $\mathfrak{B}_n$ is the bilinear form defined in Definition \ref{def:bilinearform}, then 
$$
\mathfrak{B}_n(\mathtt{D}_n\,\bar{s}_{n,j},\bar{s}_{n,k})\,=\,\mathfrak{B}_n(\bar{s}_{n,j},\mathtt{D}_n\,\bar{s}_{n,k}), \quad n\geqslant 0, \quad 0\leqslant j,k\leqslant n.
$$
Indeed,
\begin{align*}
\mathfrak{B}_n(\mathtt{D}_n\,\bar{s}_{n,j},\bar{s}_{n,k})-\mathfrak{B}_n(\bar{s}_{n,j},\mathtt{D}_n\,\bar{s}_{n,k})=\,(\lambda_j-\lambda_k)\,\mathfrak{B}_n(\bar{s}_{n,j},\bar{s}_{n,k})=\,(\lambda_j-\lambda_k)\,h_j\,\delta_{j,k},
\end{align*}
which vanishes for all $0\leqslant j,k\leqslant n$. This implies that
$$
S_n\,(\mathtt{D}_n^{\top}\,G_n-G_n\,\mathtt{D}_n)\,S_n^{\top}\,=\,\mathtt{0}, \quad n\geqslant 0,
$$
or, equivalently,
$$
\mathtt{D}_n^{\top}\,G_n-G_n\,\mathtt{D}_n\,=\,\mathtt{0}.
$$
Since $\mathtt{D}_n=R_n^{\top}N_n$, the first column of the above matrix identity reads $N_n^{\top}\,R_n\,\mathtt{M}_n=\mathtt{0}$, or, equivalently,
$$
k\,\left[\big((k-1)\,a+d \big)\,\mu_{k}+\big((k-1)\,b+e \big)\,\mu_{k-1}+(k-1)\,\mu_{k-2} \right]=0,
$$
for $n\geqslant 0$ and $0\leqslant  k \leqslant n$. It follows that $\{\mu_n\}_{n\geqslant 0}$ satisfies \eqref{eq:ttr-moments}; hence, it is pre-classical. Furthermore, $\{\mu_n\}_{n\geqslant 0}$ is classical since, otherwise, $G_n$ would not have a Cholesky factorization for some $n\geqslant 0$.
\end{proof}

The above results can be passed down to the operator $\mathcal{D}$ defined in \eqref{def:operatorD}. Let $\{\mu_n\}_{n\geqslant 0}$ be a classical sequence of real numbers, and let $\mathbf{u}$ be the moment functional defined as $\mu_n=\langle \mathbf{u}, x^n\rangle$, $n\geqslant 0$. Then \eqref{eq:selfadjoint} implies that
$$
\left\langle \mathbf{u}, \mathcal{D}[p]\,q \right\rangle \,=\, \left\langle \mathbf{u}, p\,\mathcal{D}[q]\right\rangle, \quad \forall p,q\in \Pi.
$$
That is, $\mathcal{D}$ is a self-adjoint operator on polynomials. Moreover, for $n\geqslant 0$, let $S_n^{-1}\,H_n\,S_n^{-\top}$ be the Cholesky factorization of $G_n$ and let $\bar{s}_{n,0},\bar{s}_{n,1},\ldots,\bar{s}_{n,n}$ denote the columns of $S_n^{\top}$. From Theorem \ref{th:eighenvectors} we deduce that the sequence of polynomials $\{P_n\}_{n\geqslant 0}$ with
$$
P_n(x)\,=\,\bar{s}_{n,n}^{\top}\,\mathtt{X}_n, \quad n\geqslant 0,
$$
are eigenfunctions of the operator $\mathcal{D}$. That is, 
$$
\mathcal{D}[P_n]\,=\,\lambda_n\,P_n, \quad n\geqslant 0,
$$
with $\lambda_n=n\,[(n-1)\,a+d]$. Note that $\{P_n\}_{n\geqslant 0}$ is a sequence of polynomials orthogonal with respect to $\mathbf{u}$.

\subsection{Hahn-type characterization}
Let $\{ \mu_n \}_{n \geqslant 0}$ be a sequence of real numbers and let $a,b,c\in \mathbb{R}$ such that $|a|+|b|+|c|>0$. We can define a new sequence $\{\sigma_{n}\}_{n\geqslant 0}$ as follows:
$$
\sigma_n = a \,\mu_{n+2} + b\, \mu_{n+1} + c\, \mu_n, \quad n\geqslant 0.
$$
Notice that if $\mathbf{u}$ is the moment functional defined as $\mu_n=\langle \mathbf{u}, x^n\rangle$, then $\{\sigma_n\}_{n\geqslant 0}$ is the sequence of moments of the functional given by $\mathbf{v}=\phi(x)\,\mathbf{u}$ where $\phi(x)=a\,x^2+b\,x+c$. Indeed, for $n\geqslant 0$,
$$
\langle \mathbf{v},x^n\rangle = \langle \mathbf{u},\phi\,x^n\rangle =\langle \mathbf{u}, a\,x^{n+2}+b\,x^{n+1}+c\,x^n\rangle = a \mu_{n+2} + b \mu_{n+1} + c \mu_n=\sigma_n.
$$
We denote by $\{ G_n^{(1)}\}_{n \geqslant 0}$ the sequence of $(n+1)\times (n+1)$ matrices with 
\begin{equation}\label{def:Gn1}
G_{0}^{(1)} = \sigma_0, \quad \text{and} \quad G_n^{(1)}= \left[\begin{array}{ccc|c}
 & & & \sigma_n \\
 & G_{n-1}^{(1)} & &\vdots \\
 & & &\sigma_{2n-1}\\
\hline 
\sigma_n & \ldots & \sigma_{2n-1} & \sigma_{2n}
\end{array}\right], \quad n\geqslant 1.
\end{equation}
The following theorem shows that the pre-classical character is inherited by $\{\sigma_n\}_{n\geqslant 0}$.

\begin{theorem}\label{th:sigmapreclassical}
If $\{ \mu_n \}_{n \geqslant 0}$ is pre-classical satisfying \eqref{eq:ttr-moments}, then $\{ \sigma_n \}_{n \geqslant 0}$ is pre-classical satisfying
$$
(n\,a+d_1)\,\sigma_{n+1}+(n\,b+e_1)\,\sigma_n+n\,c\,\sigma_{n-1}\,=\,0, \quad n\geqslant 0,
$$
where $d_1=d+2\,a$ and $e_1=e+b$. Moreover,
\begin{equation}\label{eq:sigmas}
\sigma_n \, =\, -(n\,a+d)\,\mu_{n+2} - (n\,b+e)\,\mu_{n+1}- n\,c\,\mu_n, \quad n\geqslant 0.
\end{equation}
\end{theorem}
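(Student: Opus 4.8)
The statement contains two separate claims, and it is cleanest to establish the explicit identity \eqref{eq:sigmas} first, since it is an immediate consequence of the definition of $\sigma_n$ together with \eqref{eq:ttr-moments}, and then to obtain the recurrence for $\{\sigma_n\}_{n\geqslant 0}$. The organizing idea throughout is that $\{\sigma_n\}_{n\geqslant 0}$ is the moment sequence of $\mathbf{v}=\phi(x)\,\mathbf{u}$, so everything should mirror, one step up, the passage from $\mathbf{u}$ to $\mathbf{v}$.

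For \eqref{eq:sigmas}, I would start from the definition $\sigma_n=a\,\mu_{n+2}+b\,\mu_{n+1}+c\,\mu_n$ and evaluate \eqref{eq:ttr-moments} at index $n+1$, namely
$$
\big((n+1)\,a+d\big)\,\mu_{n+2}+\big((n+1)\,b+e\big)\,\mu_{n+1}+(n+1)\,c\,\mu_n=0.
$$
Splitting each coefficient as $(n+1)\,a+d=a+(n\,a+d)$, and similarly for the remaining two, the left-hand side separates into $\sigma_n$ plus the quantity $(n\,a+d)\,\mu_{n+2}+(n\,b+e)\,\mu_{n+1}+n\,c\,\mu_n$; equating the whole to zero yields \eqref{eq:sigmas} at once. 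This requires neither induction nor any case distinction.

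For the recurrence satisfied by $\{\sigma_n\}_{n\geqslant 0}$, the efficient route is functional. Since $\{\mu_n\}_{n\geqslant 0}$ is pre-classical, Theorem \ref{th:pearson} gives $D(\phi\,\mathbf{u})=\psi\,\mathbf{u}$ with $\phi(x)=a\,x^2+b\,x+c$ and $\psi(x)=d\,x+e$. Applying the product rule to $\mathbf{v}=\phi\,\mathbf{u}$ and using $D\mathbf{v}=D(\phi\,\mathbf{u})=\psi\,\mathbf{u}$ together with $\phi\,\psi\,\mathbf{u}=\psi\,(\phi\,\mathbf{u})=\psi\,\mathbf{v}$,
$$
D(\phi\,\mathbf{v})=\phi'\,\mathbf{v}+\phi\,D\mathbf{v}=\phi'\,\mathbf{v}+\phi\,\psi\,\mathbf{u}=\phi'\,\mathbf{v}+\psi\,\mathbf{v}=(\psi+\phi')\,\mathbf{v},
$$
so $\mathbf{v}$ satisfies a Pearson equation with the same $\phi$ and with $\psi_1(x)=\psi(x)+\phi'(x)=(d+2a)\,x+(e+b)=d_1\,x+e_1$. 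I would then verify the hypotheses of Theorem \ref{th:pearson} for $\mathbf{v}$: the quadratic $\phi$ is unchanged, so $|a|+|b|+|c|>0$ still holds, and $n\,a+d_1=(n+2)\,a+d\ne 0$ for $n\geqslant 0$ because the original condition $m\,a+d\ne 0$ holds for every $m\geqslant 0$ (here $m=n+2\geqslant 2$); in particular $d_1=2a+d\ne 0$, so $\deg\psi_1=1$. Theorem \ref{th:pearson}, read in the converse direction for $\mathbf{v}$, then delivers exactly $(n\,a+d_1)\,\sigma_{n+1}+(n\,b+e_1)\,\sigma_n+n\,c\,\sigma_{n-1}=0$.

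I expect the main obstacle to lie not in the functional argument but in any self-contained, sequence-level verification one might prefer instead: substituting the definitions of $\sigma_{n+1},\sigma_n,\sigma_{n-1}$ into the claimed recurrence and collecting the coefficients of $\mu_{n+3},\mu_{n+2},\mu_{n+1},\mu_n,\mu_{n-1}$ forces one to invoke \eqref{eq:ttr-moments} at the three consecutive indices $n,n+1,n+2$ and to reconcile the resulting linear combinations, bookkeeping that is routine but error-prone and which the functional route sidesteps entirely. A minor point worth a remark is the requirement $\sigma_0\ne 0$ implicit in calling $\{\sigma_n\}_{n\geqslant 0}$ pre-classical; this is consistent with the convention already adopted in the proof of Theorem \ref{th:pearson}, where the recurrence itself is taken as the operative feature.
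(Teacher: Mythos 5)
Your proposal is correct, and for the recurrence it takes a genuinely different route from the paper. For \eqref{eq:sigmas} you do exactly what the paper does: rewrite \eqref{eq:ttr-moments} at index $n+1$ by splitting $(n+1)\,a+d=a+(n\,a+d)$, and similarly for the other two coefficients. For the recurrence, however, the paper stays entirely at the sequence level: it substitutes $\sigma_{n+1},\sigma_n,\sigma_{n-1}$ in terms of the $\mu$'s, groups the result as $a\,(\cdots)+b\,(\cdots)+c\,(\cdots)$ so that \eqref{eq:ttr-moments} at the indices $n+2$, $n+1$, $n$ annihilates the bracketed blocks, and then checks that the leftover cross terms $-a\,b\,\mu_{n+2}-2\,a\,c\,\mu_{n+1}+a\,b\,\mu_{n+2}-b\,c\,\mu_n+2\,a\,c\,\mu_{n+1}+b\,c\,\mu_n$ cancel --- precisely the bookkeeping you predicted and deliberately sidestepped. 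Your functional detour (Theorem \ref{th:pearson} forward, the product rule to get $D(\phi\,\mathbf{v})=(\psi+\phi')\,\mathbf{v}$, Theorem \ref{th:pearson} backward) is sound: the identification $\phi\,\psi\,\mathbf{u}=\psi\,(\phi\,\mathbf{u})$ is legitimate for left multiplication by polynomials, and your hypothesis checks $d_1=2\,a+d\ne 0$ and $n\,a+d_1=(n+2)\,a+d\ne 0$ are exactly the right ones. It also explains structurally why $d_1=d+2\,a$ and $e_1=e+b$: they are the coefficients of $\psi+\phi'$, which is the very fact the paper records for classical functionals in the remark following Theorem \ref{th:classical-char}; your proof instantiates that observation at the pre-classical level, at the price of leaning on the functional formalism where the paper's argument is purely algebraic and self-contained. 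Two small caveats. First, the statement of Theorem \ref{th:pearson} only asserts ``pre-classical,'' so to read off the specific coefficients $(a,b,c,d_1,e_1)$ of the recurrence you must invoke the correspondence between the Pearson pair and the recurrence coefficients established inside its proof (as you implicitly do); this is a citation issue, not a gap. Second, your remark about $\sigma_0\ne 0$ is well taken: for a merely pre-classical $\{\mu_n\}_{n\geqslant 0}$ the conclusion can genuinely fail in that one respect (take the Dirac delta with $\phi(x)=x$ and $\psi(x)=d\,x$, which is pre-classical, yet then $\sigma_n=\mu_{n+1}=0$ for all $n\geqslant 0$), and the paper's proof silently ignores this as well --- it is a defect of the theorem's wording shared by both arguments, not a flaw peculiar to yours.
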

\begin{proof}
For $n\geqslant 0$, we compute
\begin{align*}
&(n\,a+d_1)\, \sigma_{n+1} + (n\,b+e_1)\, \sigma_n + n\,c\, \sigma_{n-1}\\ 
&= a\,\big( [(n+2)a + d] \mu_{n+3} + [(n+2)b + e] \mu_{n+2} + (n+2)c \mu_{n+1}-b\,\mu_{n+2}-2\,c\,\mu_{n+1} \big) \\
&\ \  + b\,\big( [(n+1)a + d] \mu_{n+2} + [(n+1)b + e] \mu_{n+1} + (n+1)c\, \mu_{n}+a\,\mu_{n+2}-c\,\mu_n \big) \\
&\ \ \ + c\,\big( (n\,a + d)\, \mu_{n+1} + (n\,b + e)\, \mu_{n} + n\,c\, \mu_{n-1} +2\,a\,\mu_{n+1}+b\,\mu_n\big),
\end{align*}
where we have used $\sigma_n = a\, \mu_{n+2} + b\, \mu_{n+1} + c\, \mu_n$. By \eqref{eq:ttr-moments}, we have
\begin{align*}
(n\,a+d_1)&\, \sigma_{n+1} + (n\,b+e_1)\, \sigma_n + n\,c\, \sigma_{n-1}\\
&= -a\,b\, \mu_{n+2} - 2\,a\,c\,\mu_{n+1} +a\,b\, \mu_{n+2} -b\,c\, \mu_n + 2\,a\,c\, \mu_{n+1} + b\,c\, \mu_n = 0.
\end{align*}

Finally, \eqref{eq:sigmas} follows from the fact that \eqref{eq:ttr-moments} can be written as
$$
 a \,\mu_{n+2} + b\, \mu_{n+1} + c\, \mu_n\,=\,-(n\,a+d)\,\mu_{n+2} - (n\,b+e)\,\mu_{n+1}- n\,c\,\mu_n.
$$
\end{proof}

When $\{\mu_n\}_{n\geqslant 0}$ is a classical sequence of real numbers, the matrix $G_n^{(1)}$ satisfies an interesting and useful relation involving the matrices $G_n$ and $\mathtt{D}_n$.

\begin{prop}\label{prop:NGN}
Let $\{\mu_n\}_{n\geqslant 0}$ be a classical sequence of real numbers satisfying \eqref{eq:ttr-moments}. Then, for $n\geqslant 0$,
$$
N_{n+1}^{\top}\,G_n^{(1)}\,N_{n+1}\,=\,-\mathtt{D}_{n+1}^{\top}\,G_{n+1}.
$$
\end{prop}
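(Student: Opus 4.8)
The plan is to reduce the stated identity to a simpler, ``un-sandwiched'' matrix equation and then verify that equation entrywise, the verification collapsing directly onto the recurrence \eqref{eq:ttr-moments}. Recall that by definition $\mathtt{D}_{n+1}=R_{n+1}^{\top}\,N_{n+1}$, so that $\mathtt{D}_{n+1}^{\top}=N_{n+1}^{\top}\,R_{n+1}$ and hence
$$
-\mathtt{D}_{n+1}^{\top}\,G_{n+1}=-N_{n+1}^{\top}\,R_{n+1}\,G_{n+1}.
$$
Comparing with the left-hand side $N_{n+1}^{\top}\,G_n^{(1)}\,N_{n+1}$, I see that it suffices to prove the stronger identity of $(n+1)\times(n+2)$ matrices
$$
G_n^{(1)}\,N_{n+1}=-R_{n+1}\,G_{n+1};
$$
left-multiplying this by $N_{n+1}^{\top}$ then yields the proposition. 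This factoring-out of $N_{n+1}^{\top}$ is the one conceptual step; the rest is bookkeeping.

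First I would record the explicit entries of the matrices involved, using $0$-based indexing for rows $i$ and columns $j$. The matrix $N_{n+1}$ represents the derivative at the level of coefficients, so its only nonzero entries are $(N_{n+1})_{k,k+1}=k+1$; since $G_n^{(1)}$ is Hankel with $(G_n^{(1)})_{i,k}=\sigma_{i+k}$, the product has entries
$$
(G_n^{(1)}\,N_{n+1})_{i,j}=j\,\sigma_{i+j-1},
$$
with the convention that the $j=0$ column vanishes. On the other side, $R_{n+1}$ is tridiagonal with $(R_{n+1})_{i,i-1}=i\,c$, $(R_{n+1})_{i,i}=i\,b+e$, and $(R_{n+1})_{i,i+1}=i\,a+d$, while $G_{n+1}$ is Hankel with $(G_{n+1})_{k,j}=\mu_{k+j}$, so that
$$
(R_{n+1}\,G_{n+1})_{i,j}=i\,c\,\mu_{i+j-1}+(i\,b+e)\,\mu_{i+j}+(i\,a+d)\,\mu_{i+j+1}.
$$

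It then remains to check, for all admissible $i,j$, that $j\,\sigma_{i+j-1}$ equals the negative of the last display. Expanding $\sigma_{i+j-1}=a\,\mu_{i+j+1}+b\,\mu_{i+j}+c\,\mu_{i+j-1}$ and collecting terms, the two sides agree if and only if
$$
\big((i+j)\,a+d\big)\,\mu_{i+j+1}+\big((i+j)\,b+e\big)\,\mu_{i+j}+(i+j)\,c\,\mu_{i+j-1}=0.
$$
Writing $m=i+j$, this is precisely the recurrence \eqref{eq:ttr-moments} evaluated at $n=m$, which holds because $\{\mu_n\}_{n\geqslant 0}$ is pre-classical; the index $m=i+j$ ranges over $0,\ldots,2n+1$, so only moments up to $\mu_{2n+2}$ are needed, and these already appear in $G_{n+1}$. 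The boundary column $j=0$ is just the case $m=i$ of the same recurrence. This establishes the reduced identity and hence the proposition.

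I expect no genuine obstacle here: the computation uses only the second-order recurrence and not the nondegeneracy $\det G_n\neq 0$, so the statement in fact holds for every pre-classical sequence. The only real difficulty is organizational, namely aligning the tridiagonal band of $R_{n+1}$, the single superdiagonal of $N_{n+1}$, and the Hankel shift of $G_{n+1}$ so that the combined coefficients telescope into the single recurrence index $i+j$; once the entries are written out as above, the collapse is immediate. (Alternatively, one could argue by induction on $n$ using the bordered block structure of all the matrices, mirroring the proof of Theorem \ref{th:selfadjoint}, but the direct entrywise route is shorter and self-contained.)
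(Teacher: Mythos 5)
Your proof is correct, and it takes a genuinely different route from the paper's. The paper proves the identity by induction on $n$, exploiting the bordered block structure of $G_{n+1}^{(1)}$, $\mathtt{D}_{n+2}^{\top}$ and $G_{n+2}$: the off-diagonal blocks are matched entry by entry, the symmetry of the corner block is obtained by invoking Theorem \ref{th:selfadjoint}, and the corner scalar is handled via relation \eqref{eq:sigmas}. You instead exploit the factorization $\mathtt{D}_{n+1}^{\top}=N_{n+1}^{\top}R_{n+1}$ to peel off the common left factor $N_{n+1}^{\top}$ and verify the stronger, un-sandwiched identity $G_n^{(1)}N_{n+1}=-R_{n+1}G_{n+1}$ directly: your entry computations are right ($(G_n^{(1)}N_{n+1})_{i,j}=j\,\sigma_{i+j-1}$ and $(R_{n+1}G_{n+1})_{i,j}=i\,c\,\mu_{i+j-1}+(i\,b+e)\,\mu_{i+j}+(i\,a+d)\,\mu_{i+j+1}$), their sum collapses onto \eqref{eq:ttr-moments} at index $m=i+j\in\{0,\ldots,2n+1\}$, and the needed moments $\mu_{2n+2}$ and shifted moments $\sigma_{2n}$ are exactly those available in $G_{n+1}$ and $G_n^{(1)}$. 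Your approach buys three things: it is shorter and self-contained (no induction and no appeal to Theorem \ref{th:selfadjoint}); it proves a strictly stronger matrix identity, from which the proposition follows by a single multiplication; and it makes explicit that only the recurrence is used, so the conclusion holds for any pre-classical sequence, the nondegeneracy $\det G_n\ne 0$ playing no role. What the paper's inductive bordering argument buys, by contrast, is thematic consistency — the whole point of the paper is that properties of $G_n$ propagate to $G_{n+1}$ under bordering — and reuse of the already-established self-adjointness, at the cost of a longer computation.
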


\begin{proof}
We prove this theorem by induction. For $n=0$, on one hand we have
$$
N_1^{\top}\,G_0^{(1)}\,N_1\,=\,\begin{bmatrix} 0 \\ 1 \end{bmatrix}\,\sigma_0\,\begin{bmatrix} 0 & 1 \end{bmatrix} \,=\,\begin{bmatrix} 0 & 0 \\ 0 & \sigma_0\end{bmatrix}.
$$
On the other hand, we have
$$
-\mathtt{D}_{1}^{\top}\,G_{1}\,=\,\begin{bmatrix}
0 & 0 \\
-e & -d
\end{bmatrix}\,\begin{bmatrix}
\mu_0 & \mu_1 \\
\mu_1 & \mu_2
\end{bmatrix}\,=\,\begin{bmatrix} 0 & 0 \\ -e\,\mu_0-d\,\mu_1 & -e\,\mu_1-d\,\mu_2 \end{bmatrix}.
$$
Since $\{\mu_n\}_{n\geqslant 0}$ satisfies \eqref{eq:ttr-moments}, we have that $-e\,\mu_0-d\,\mu_1=0$ and, by \eqref{eq:sigmas}, $-e\,\mu_1-d\,\mu_2=\sigma_0$. Therefore,
$$
-\mathtt{D}_{1}^{\top}\,G_{1}\,=\,\begin{bmatrix} 0 & 0 \\ 0 & \sigma_0\end{bmatrix},
$$
which proves that $N_1^{\top}\,G_0^{(1)}\,N_1 = -\mathtt{D}_{1}^{\top}\,G_{1}$.

Now, suppose that $N_{k+1}^{\top}\,G_k^{(1)}\,N_{k+1}\,=\,-\mathtt{D}_{k+1}^{\top}\,G_{k+1}$ holds for $k\geqslant 0$. On one hand, we compute
\begin{align*}
N_{k+2}^{\top}\,&G_{k+1}^{(1)}\,N_{k+2}\\
&=\,\left[ \begin{array}{c|c} 
N_{k+1}^{\top} & \mathtt{0} \\[3pt]
  \hline
  \\[-6pt]
\mathtt{0} & k+2
\end{array}\right] \,\left[\begin{array}{ccc|c}
 & & & \sigma_{k+1} \\
 & G_{k}^{(1)} & &\vdots \\
 & & &\sigma_{2k+1}\\
\hline 
\sigma_{k+1} & \ldots & \sigma_{2k+1} & \sigma_{2k+2}
\end{array}\right]\, \left[ \begin{array}{c|c} 
N_{k+1} & \mathtt{0} \\[3pt]
  \hline
  \\[-6pt]
\mathtt{0} & k+2
\end{array}\right].
\end{align*}
Multiplying by blocks and using the induction hypothesis, we get
$$
N_{k+2}^{\top}\,G_{k+1}^{(1)}\,N_{k+2}\,=\,\left[\begin{array}{c|c} -\mathtt{D}_{k+1}^{\top}\,G_{k+1} & \bar{x}_k \\[3pt]
\hline 
\\[-6pt]
\bar{x}_k^{\top} & (k+2)^2\,\sigma_{2k+2}\end{array}\right],
$$
where
$$
\bar{x}_k\,=\,(k+2)\,N_{k+1}^{\top}\,\begin{bmatrix} \sigma_{k+1} \\ \vdots \\ \sigma_{2k+1} \end{bmatrix}.
$$
On the other hand,
\begin{align*}
&-\mathtt{D}_{k+2}^{\top}\,G_{k+2}\\
&=\left[ \begin{array}{ccccc|c} 
 & & & & & 0 \\
 & &  & -\mathtt{D}_{k+1}^{\top} & & \vdots \\
 & & & & & 0\\
 \hline 
 0 & \ldots & 0 & -(k+2)\,(k+1)\,c & -(k+2)\,[(k+1)\,b+e] & -(k+2)\,[(k+1)\,a+d]
\end{array}\right]\\
&\qquad \times \left[\begin{array}{ccc|c}
 & & & \mu_{k+2} \\
 & G_{k+1} & &\vdots \\
 & & &\mu_{2k+3}\\
\hline 
\mu_{k+2} & \ldots & \mu_{2k+3} & \mu_{2k+4}
\end{array}\right].
\end{align*}
After multiplying by blocks, we have
$$
-\mathtt{D}_{k+2}^{\top}\,G_{k+2}\,=\,\left[\begin{array}{c|c} -\mathtt{D}_{k+1}^{\top}\,G_{k+1} & \bar{y}_k \\[3pt]
\hline 
\\[-6pt]
\bar{z}_k & w_k\end{array}\right],
$$
where
\begin{align*}
&\bar{y}_k=-\mathtt{D}_{k+1}^{\top}\,\begin{bmatrix} \mu_{k+2} \\ \vdots \\ \mu_{2k+3} \end{bmatrix}, \\[6pt]
&\bar{z}_k=\begin{bmatrix}0 & \ldots & 0 & -(k+2)\,(k+1)\,c & -(k+2)\,[(k+1)\,b+e]\end{bmatrix}\,G_{k+1}\\
& \qquad \qquad \qquad \qquad \qquad \qquad \qquad \qquad -(k+2)\,[(k+1)\,a+d]\,\begin{bmatrix} \mu_{k+2} & \ldots & \mu_{2k+3}\end{bmatrix},\\[6pt]
&w_k= -(k+2)\bigg[(k+1)\,c\,\mu_{2k+2}+[(k+1)\,b+e]\,\mu_{2k+3} +[(k+1)\,a+d]\,\mu_{2k+4}\bigg].
\end{align*}
By Theorem \ref{th:selfadjoint}, we have that $\bar{z}_k=\bar{y}_k^{\top}$. Hence, our efforts should focus on showing that $\bar{x}_k=\bar{y}_k$ and $w_k=(k+2)^2\,\sigma_{2k+2}$. Let us start by proving the second identity.

Observe that
\begin{align*}
w_k=&-(k+2)\,\bigg[[(2k+2)\,a+d]\,\mu_{2k+4}+[(2k+2)\,b+e]\,\mu_{2k+3}+(2k+2)\,c\,\mu_{2k+2} \bigg]\\
&+(k+2)\,(k+1)\,(a\,\mu_{2k+4}+b\,\mu_{2k+3}+c\,\mu_{2k+2}).
\end{align*}
From \eqref{eq:sigmas} and the fact that $\sigma_{2k+2}=a\,\mu_{2k+4}+b\,\mu_{2k+3}+c\,\mu_{2k+2}$, we deduce that
$$
w_k=(k+2)\,\sigma_{2k+2}+(k+2)\,(k+1)\,\sigma_{2k+2}=(k+2)^2\,\sigma_{2k+2}.
$$

In order to prove that $\bar{x}_k=\bar{y}_k$, we note that
$$
\bar{x}_k\,=\,(k+2)\,\left[ \begin{array}{c|c} 
N_{k}^{\top} & \mathtt{0} \\[3pt]
  \hline
  \\[-6pt]
\mathtt{0} & k+1
\end{array}\right]\,\begin{bmatrix} \sigma_{k+1} \\ \vdots \\ \sigma_{2k+1} \end{bmatrix},
$$
and 
$$
\bar{y}_k=\left[ \begin{array}{ccccc|c} 
 & & & & & 0 \\
 & &  & -\mathtt{D}_{k}^{\top} & & \vdots \\
 & & & & & 0\\
 \hline 
 0 & \ldots & 0 & -(k+1)\,k\,c & -(k+1)\,(k\,b+e) & -(k+1)\,(k\,a+d)
\end{array}\right]\,\begin{bmatrix} \mu_{k+2} \\ \vdots \\ \mu_{2k+3} \end{bmatrix}.
$$
Then, the last entry of $\bar{x}_k-\bar{y}_k$ is
\begin{align*}
&(k+2)\,(k+1)\,\sigma_{2k+1}+(k+1)\,[k\,c\,\mu_{2k+1}+(k\,b+e)\,\mu_{2k+2}+(k\,a+d)\,\mu_{2k+3}]\\
&=(k+2)\,(k+1)\,\sigma_{2k+1}-(k+1)\,\sigma_{2k+1}-(k+1)^2\,\sigma_{2k+1}=0,
\end{align*}
where we have used \eqref{eq:sigmas} and the fact that $\sigma_{2k+1}=a\,\mu_{2k+3}+b\,\mu_{2k+2}+c\,\mu_{2k+1}$. In general, the $i$-th entry of $\bar{x}_k-\bar{y}_k$, with $1\leqslant i \leqslant k+2$, is
\begin{align*}
&(i-1)\,\bigg[(k+2)\,\sigma_{k+i-1}+[(i-2)\,c\,\mu_{k+i-1}+[(i-2)\,b+e]\,\mu_{k+i}+[(i-2)\,a+d]\,\mu_{k+1+i}]\bigg]\\
&=(i-1)\,\bigg[(k+2)\,\sigma_{k+i-1}-\sigma_{k+i-1}-(k+1)\,\sigma_{k+i-1} \bigg]=0,
\end{align*}
where we have used \eqref{eq:sigmas} and the fact that $\sigma_{k+i-1}=a\,\mu_{k+1+i}+b\,\mu_{k+i}+c\,\mu_{k+i-1}$. This proves that $\bar{x}_k-\bar{y}_k=\mathtt{0}$ and, in turn, that $N_{k+2}^{\top}\,G_{k+1}^{(1)}\,N_{k+2}\,=\,-\mathtt{D}_{k+2}^{\top}\,G_{k+2}$.

It follows from the Principle of Induction that $N_{n+1}^{\top}\,G_n^{(1)}\,N_{n+1}\,=\,-\mathtt{D}_{n+1}^{\top}\,G_{n+1}$ holds for $n\geqslant 0$.
\end{proof}

Now, let $\{\mu_n\}_{n\geqslant 0}$ be a classical sequence of real numbers. For $n\geqslant 1$, let $S_{n}^{-1}\,H_{n}\,S_{n}^{-\top}$ be the Cholesky factorization of $G_{n}$ and let $\bar{s}_{n,0},\bar{s}_{n,1},\ldots,\bar{s}_{n,n}$ denote the columns of $S_{n}^{\top}$. Consider the set $\{ \bar{s}_{n,0}^{(1)}, \ldots, \bar{s}_{n,n}^{(1)}\}$ of vectors in $\mathbb{R}^{n+1}$ with
$$
\bar{s}_{n,j}^{(1)}=\frac{1}{j+1}\,N_{n+1}\,\bar{s}_{n+1,j+1}, \quad 0\leqslant j \leqslant n.
$$
Observe that $\{ \bar{s}_{n,0}^{(1)}, \ldots, \bar{s}_{n,n}^{(1)}\}$ constitutes a basis for $\mathbb{R}^{n+1}$. Furthermore, since $S_{n}^{\top}$ is a unit upper triangular matrix, the $(n+1)\times (n+1)$ matrix $S_{n,1}^{\top}$ defined as
\begin{equation}\label{def:Sn1}
S_{n,1}^{\top}=\left[\bar{s}_{n,0}^{(1)}\ \bar{s}_{n,1}^{(1)}\, \ldots\, \bar{s}_{n,n}^{(1)} \right],
\end{equation}
is also a unit upper triangular matrix. We show that $G_n^{(1)}$ defined in \eqref{def:Gn1} admits a Cholesky factorization with $S_{n,1}^{-1}$  its triangular matrix factor.

\begin{theorem}\label{th:choleskysigma}
Let $\{\mu_n\}_{n\geqslant 0}$ be a classical sequence of real numbers satisfying \eqref{eq:ttr-moments}. For $n\geqslant 0$, let $S_{n}^{-1}\,H_{n}\,S_{n}^{-\top}$ be the Cholesky factorization of $G_{n}$. Then $G_n^{(1)}$ admits the Cholesky factorization given by
$$
G_n^{(1)}= S_{n,1}^{-1}\,H_{n,1}\,S_{n,1}^{-\top},
$$
where $S_{n,1}^{\top}$ is the matrix defined in \eqref{def:Sn1} and $H_{n,1}=\textnormal{diag}[h_{0}^{(1)}, \ldots, h_{n}^{(1)} ]$ with
$$
h^{(1)}_{j}=-\frac{\lambda_{j+1}}{(j+1)^2}\,h_{j+1}, \quad j\geqslant 0,
$$
and $\lambda_j=j\,[(j-1)\,a+d]$.
\end{theorem}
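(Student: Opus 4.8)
The plan is to exploit the uniqueness of the Cholesky factorization. Since it is already observed that $S_{n,1}^{\top}$ is a unit upper triangular matrix, by uniqueness it suffices to verify that the product $S_{n,1}\,G_n^{(1)}\,S_{n,1}^{\top}$ equals the diagonal matrix $H_{n,1}$ with the stated (nonzero) entries; the factorization $G_n^{(1)}=S_{n,1}^{-1}\,H_{n,1}\,S_{n,1}^{-\top}$ then follows at once, and the nonsingularity of $G_n^{(1)}$ is automatic. Concretely, the $(i,j)$ entry of $S_{n,1}\,G_n^{(1)}\,S_{n,1}^{\top}$ is $(\bar{s}_{n,i}^{(1)})^{\top}\,G_n^{(1)}\,\bar{s}_{n,j}^{(1)}$, so the whole problem reduces to computing these scalars and showing that they vanish off the diagonal while reproducing $h_j^{(1)}$ on it.

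The computation is a short chain that threads together the three results already proved. First I would substitute the definition $\bar{s}_{n,j}^{(1)}=\frac{1}{j+1}\,N_{n+1}\,\bar{s}_{n+1,j+1}$ to obtain
$$
(\bar{s}_{n,i}^{(1)})^{\top}\,G_n^{(1)}\,\bar{s}_{n,j}^{(1)}=\frac{1}{(i+1)(j+1)}\,\bar{s}_{n+1,i+1}^{\top}\,\big(N_{n+1}^{\top}\,G_n^{(1)}\,N_{n+1}\big)\,\bar{s}_{n+1,j+1}.
$$
Next, Proposition \ref{prop:NGN} replaces the bracketed factor by $-\mathtt{D}_{n+1}^{\top}\,G_{n+1}$. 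Then, since $\bar{s}_{n+1,i+1}$ is a column of $S_{n+1}^{\top}$, Theorem \ref{th:eighenvectors} gives $\mathtt{D}_{n+1}\,\bar{s}_{n+1,i+1}=\lambda_{i+1}\,\bar{s}_{n+1,i+1}$, whence $\bar{s}_{n+1,i+1}^{\top}\,\mathtt{D}_{n+1}^{\top}=\lambda_{i+1}\,\bar{s}_{n+1,i+1}^{\top}$. Finally, the orthogonality of the columns of $S_{n+1}^{\top}$ from Theorem \ref{th:choleskyorth} yields $\bar{s}_{n+1,i+1}^{\top}\,G_{n+1}\,\bar{s}_{n+1,j+1}=h_{i+1}\,\delta_{i,j}$. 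Collecting the factors gives
$$
(\bar{s}_{n,i}^{(1)})^{\top}\,G_n^{(1)}\,\bar{s}_{n,j}^{(1)}=-\frac{\lambda_{i+1}}{(i+1)(j+1)}\,h_{i+1}\,\delta_{i,j},
$$
which is exactly $h_j^{(1)}\,\delta_{i,j}$ with $h_j^{(1)}=-\lambda_{j+1}\,h_{j+1}/(j+1)^2$.

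There is no serious obstacle here; the work lies entirely in arranging the three prior results in the correct order and keeping the index shift $j\mapsto j+1$ straight. The one point that must not be overlooked is checking that $H_{n,1}$ is nonsingular, i.e. that each $h_j^{(1)}\neq 0$: this holds because $h_{j+1}\neq 0$ (as $H_{n+1}$ is nonsingular) and $\lambda_{j+1}=(j+1)\,(j\,a+d)\neq 0$, the latter being guaranteed by the standing hypothesis $n\,a+d\neq 0$ for $n\geqslant 0$. With $H_{n,1}$ diagonal and nonsingular and $S_{n,1}^{\top}$ unit upper triangular, the uniqueness of the Cholesky factorization closes the argument and simultaneously certifies that $\det G_n^{(1)}\neq 0$.
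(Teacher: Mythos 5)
Your proposal is correct and follows essentially the same route as the paper: the identical chain of substituting $\bar{s}_{n,j}^{(1)}=\frac{1}{j+1}N_{n+1}\bar{s}_{n+1,j+1}$, applying Proposition \ref{prop:NGN} to replace $N_{n+1}^{\top}G_n^{(1)}N_{n+1}$ by $-\mathtt{D}_{n+1}^{\top}G_{n+1}$, using the eigenvector relation of Theorem \ref{th:eighenvectors}, and concluding with the orthogonality $\bar{s}_{n+1,j+1}^{\top}G_{n+1}\bar{s}_{n+1,k+1}=h_{j+1}\delta_{j,k}$. Your explicit verification that $h_j^{(1)}\neq 0$ (via $h_{j+1}\neq 0$ and $\lambda_{j+1}=(j+1)(j\,a+d)\neq 0$) is a welcome detail that the paper defers to Corollary \ref{coro:sigmaclassical}, but it does not change the argument.
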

\begin{proof}
For $0\leqslant j,k \leqslant n$, we compute
$$
\left(\bar{s}_{n,j}^{(1)}\right)^{\top}\,G_n^{(1)}\,\bar{s}_{n,k}^{(1)}\,=\,\frac{1}{(j+1)\,(k+1)}\,\bar{s}_{n+1,j+1}^{\top}\,N_{n+1}^{\top}\,G_n^{(1)}\,N_{n+1}\,\bar{s}_{n+1,k+1}.
$$
Using Theorem \ref{th:eighenvectors} and Proposition \ref{prop:NGN}, we obtain
\begin{align*}
\left(\bar{s}_{n,j}^{(1)}\right)^{\top}\,G_n^{(1)}\,\bar{s}_{n,k}^{(1)}\,&=\,-\frac{1}{(j+1)\,(k+1)}\,\bar{s}_{n+1,j+1}^{\top}\,\mathtt{D}_{n+1}^{\top}\,G_{n+1}\,\bar{s}_{n+1,k+1}\\
&=-\frac{\lambda_{j+1}}{(j+1)\,(k+1)}\,\bar{s}_{n+1,j+1}^{\top}\,G_{n+1}\,\bar{s}_{n+1,k+1}\\
&=-\frac{\lambda_{j+1}}{(j+1)\,(k+1)}\,\mathcal{B}_{n+1}(\bar{s}_{n+1,j+1},\,\bar{s}_{n+1,k+1})\\
&=-\frac{\lambda_{j+1}}{(j+1)^2}\,h_{j+1}\,\delta_{j,k}.
\end{align*}
This implies that $S_{n,1}\,G_n^{(1)}\,S_{n,1}^{\top}= H_{n,1}$ and, hence, $G_n^{(1)}= S_{n,1}^{-1}\,H_{n,1}\,S_{n,1}^{-\top}$.
\end{proof}

\begin{corollary}\label{coro:sigmaclassical}
    If $\{\mu_n\}_{n\geqslant 0}$ is a classical sequence, then so is $\{\sigma_n\}_{n\geqslant 0}$.
\end{corollary}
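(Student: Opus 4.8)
The plan is to check the two defining conditions of a classical sequence (Definition \ref{def:classical}) for $\{\sigma_n\}_{n\geqslant 0}$, namely that it is pre-classical and that $\det G_n^{(1)}\ne 0$ for all $n\geqslant 0$. The first condition is essentially already in hand: Theorem \ref{th:sigmapreclassical} shows that $\{\sigma_n\}_{n\geqslant 0}$ satisfies a second order recurrence of the required form, with the same leading coefficients $a,b,c$ (so that $|a|+|b|+|c|>0$) and with $n\,a+d_1=(n+2)\,a+d\ne 0$ for $n\geqslant 0$, the latter being an instance of the hypothesis $n\,a+d\ne 0$ with index shifted by two. The only piece of pre-classicality not immediately supplied by that theorem is the nondegeneracy $\sigma_0\ne 0$, which I will recover from the determinant computation below.

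The heart of the argument is the explicit Cholesky factorization of $G_n^{(1)}$ furnished by Theorem \ref{th:choleskysigma}. Since $S_{n,1}^{\top}$ is unit upper triangular we have $\det S_{n,1}=\det S_{n,1}^{-1}=1$, and therefore
$$
\det G_n^{(1)}=\det H_{n,1}=\prod_{j=0}^{n} h_j^{(1)}=\prod_{j=0}^{n}\left(-\frac{\lambda_{j+1}}{(j+1)^2}\,h_{j+1}\right),\qquad n\geqslant 0.
$$
I would then argue that every factor on the right is nonzero. On one hand, because $\{\mu_n\}_{n\geqslant 0}$ is classical we have $\det G_m\ne 0$ for all $m$, so by \eqref{eq:hn+1} the diagonal entries $h_{j+1}=\det G_{j+1}/\det G_j$ are all nonzero. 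On the other hand, $\lambda_{j+1}=(j+1)\big[j\,a+d\big]$, and here the factor $j\,a+d$ is exactly $n\,a+d$ with $n=j\geqslant 0$, which is nonzero by the standing hypothesis on pre-classical parameters, while $(j+1)\ne 0$. Hence $\lambda_{j+1}\ne 0$, and each $h_j^{(1)}\ne 0$.

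Consequently $\det G_n^{(1)}\ne 0$ for every $n\geqslant 0$. Taking $n=0$ gives $\sigma_0=G_0^{(1)}=\det G_0^{(1)}\ne 0$, which completes the verification that $\{\sigma_n\}_{n\geqslant 0}$ is pre-classical, and taking arbitrary $n$ supplies the nonvanishing-determinant condition. Together these show that $\{\sigma_n\}_{n\geqslant 0}$ is classical. There is no real obstacle here: all the delicate work was done in Theorems \ref{th:sigmapreclassical} and \ref{th:choleskysigma}, and the remaining task is the essentially bookkeeping observation that the diagonal factors $h_j^{(1)}$ are products of quantities already known to be nonzero.
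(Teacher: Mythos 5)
Your proof is correct and follows essentially the same route as the paper: pre-classicality via Theorem \ref{th:sigmapreclassical}, then nonvanishing of $\det G_n^{(1)}$ from the Cholesky factorization of Theorem \ref{th:choleskysigma}, using that each factor $h_j^{(1)}=-\frac{\lambda_{j+1}}{(j+1)^2}\,h_{j+1}$ is nonzero because $h_{j+1}=\det G_{j+1}/\det G_j\ne 0$ and $j\,a+d\ne 0$. Your additional observation that the condition $\sigma_0\ne 0$ (required for pre-classicality by Definition \ref{def:classical sequence}) can be recovered as $\sigma_0=\det G_0^{(1)}=h_0^{(1)}\ne 0$ is a small point of care that the paper's own proof passes over silently.
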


\begin{proof}
By Theorem \eqref{th:sigmapreclassical}, $\{\sigma_n\}_{n\geqslant 0}$ is a pre-classical sequence. 

Now, we must show that $\det G_n^{(1)}\ne 0$ for $n\geqslant 0$ (Definition \ref{def:classical}). From Theorem \ref{th:choleskysigma} we deduce that
$$
\det G_n^{(1)}=\det H_{n,1}=h^{(1)}_{0}\cdots h^{(1)}_{n}, \quad n\geqslant 0,
$$
with
$$
h^{(1)}_{j}=-\frac{\lambda_{j+1}}{(j+1)^2}\,h_{j+1}, \quad j\geqslant 0 ,
$$
and $\lambda_j=j\,[(j-1)\,a+d]$. Since $\{\mu_n\}_{n\geqslant 0}$ is classical, then we have
$$
h_{n} \ne 0 \quad \text{and} \quad n\,a+d\ne 0, \quad n\geqslant 0, 
$$
(see equality \eqref{eq:hn+1} and Definition \ref{def:classical sequence}). This implies that $h^{(1)}_j\ne 0$ for $j\geqslant 0$. Therefore $\det G_n^{(1)}\ne 0$ for $n\geqslant 0$ and, thus, $\{\sigma_n\}_{n\geqslant 0}$ is classical.
\end{proof}

 Theorem \ref{th:choleskysigma} implies that for $n\geqslant 0$, the columns of $S_{n,1}^{\top}$ constitute an orthogonal basis for $\mathbb{R}^{n+1}$ with respect to the bilinear form associated with $\{\sigma_n\}_{n\geqslant 0}$ (see Definition \ref{def:bilinearform}), which we  denote by
$$
\mathfrak{B}^{(1)}_n(\bar{u},\bar{v})\,=\,\bar{u}^{\top}\,G_n^{(1)}\,\bar{v}, \quad \forall \bar{u},\bar{v}\in \mathbb{R}^{n+1}.
$$
We are ready for the following characterizations of classical sequences.

\begin{theorem}\label{th:Hahn-type}
    Let $\{\mu_n\}_{n\geqslant 0}$ be a sequence of real numbers such that $\det G_n\ne 0$ for $n\geqslant 0$. Let $S_{n}^{-1}\,H_{n}\,S_{n}^{-\top}$ be the Cholesky factorization of $G_{n}$ and let $\bar{s}_{n,0},\bar{s}_{n,1},\ldots,\bar{s}_{n,n}$ denote the columns of $S_{n}^{\top}$. Then $\{\mu_n\}_{n\geqslant 0}$ is classical if and only if there are real numbers $a,b,c$ satisfying 
$$
|a|+|b|+|c|>0,
$$
such that the set $\{ \bar{s}_{n,0}^{(1)}, \ldots, \bar{s}_{n,n}^{(1)}\}$ of vectors in $\mathbb{R}^{n+1}$ with
$$
\bar{s}_{n,j}^{(1)}=\frac{1}{j+1}\,N_{n+1}\,\bar{s}_{n+1,j+1}, \quad 0\leqslant j \leqslant n,
$$
constitutes an orthogonal basis for $\mathbb{R}^{n+1}$ with respect to the bilinear form associated with $\{\sigma_n\}_{n\geqslant 0}$, where
$$
\sigma_n\,=\,a\,\mu_{n+2}+b\,\mu_{n+1}+c\,\mu_n, \qquad n\geqslant 0.
$$
\end{theorem}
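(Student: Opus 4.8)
The forward implication is essentially already available. If $\{\mu_n\}_{n\geqslant 0}$ is classical, it satisfies \eqref{eq:ttr-moments} for some $a,b,c,d,e$ with $|a|+|b|+|c|>0$; taking these $a,b,c$ to define $\{\sigma_n\}_{n\geqslant 0}$, Theorem \ref{th:choleskysigma} furnishes the Cholesky factorization $G_n^{(1)}=S_{n,1}^{-1}\,H_{n,1}\,S_{n,1}^{-\top}$ whose triangular factor has the vectors $\bar{s}_{n,j}^{(1)}$ as the columns of $S_{n,1}^{\top}$, and Theorem \ref{th:choleskyorth} applied to $G_n^{(1)}$ says exactly that $\{\bar{s}_{n,0}^{(1)},\ldots,\bar{s}_{n,n}^{(1)}\}$ is an orthogonal basis for $\mathbb{R}^{n+1}$ with respect to $\mathfrak{B}_n^{(1)}$. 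So all the work is in the converse.

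For the converse I would first rewrite the hypothesis as a matrix identity. Put $W_{n+1}:=N_{n+1}^{\top}\,G_n^{(1)}\,N_{n+1}$; since $\bar{s}_{n,j}^{(1)}=\frac{1}{j+1}N_{n+1}\bar{s}_{n+1,j+1}$, the assumed $\mathfrak{B}_n^{(1)}$-orthogonality is equivalent to $\bar{s}_{n+1,j}^{\top}\,W_{n+1}\,\bar{s}_{n+1,k}=0$ for $1\leqslant j\neq k\leqslant n+1$. The decisive point is that this partial orthogonality upgrades to a full one: because $N_{n+1}\bar{e}_0=\mathtt{0}$ (differentiation annihilates constants), the first row and column of the symmetric matrix $W_{n+1}$ vanish, so $\bar{e}_0=\bar{s}_{n+1,0}$ lies in the radical of $W_{n+1}$ and is $W_{n+1}$-orthogonal to everything. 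Hence the whole Cholesky basis $\{\bar{s}_{n+1,0},\ldots,\bar{s}_{n+1,n+1}\}$ is $W_{n+1}$-orthogonal, and it is $G_{n+1}$-orthogonal by construction. Since $G_{n+1}$ is nonsingular, expanding $W_{n+1}\bar{s}_{n+1,j}$ in the $G_{n+1}$-orthogonal basis and pairing with each $\bar{s}_{n+1,i}$ forces $W_{n+1}\,\bar{s}_{n+1,j}=\lambda_j'\,G_{n+1}\,\bar{s}_{n+1,j}$ for scalars $\lambda_j'$.

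Next I would translate this eigen-relation into the language of functionals. Writing $P_j(x)=\bar{s}_{n+1,j}^{\top}\mathtt{X}_{n+1}$ and $\mathbf{v}=\phi(x)\,\mathbf{u}$, one has $\bar{u}^{\top}W_{n+1}\bar{v}=\langle\mathbf{v},p'q'\rangle$ whenever $\bar{u},\bar{v}$ are the coefficient vectors of $p,q$, so the eigen-relation reads $\langle\mathbf{v},P_j'\,q'\rangle=\lambda_j'\,\langle\mathbf{u},P_j\,q\rangle$ for all $q\in\Pi_{n+1}$. Specializing to $j=1$, with $P_1(x)=x-m_1$ and $P_1'=1$, the left side is $\langle\mathbf{v},q'\rangle=-\langle D(\phi\,\mathbf{u}),q\rangle$, so the identity collapses to $\langle D(\phi\,\mathbf{u})+\lambda_1'\,P_1\,\mathbf{u},\,q\rangle=0$ for all $q\in\Pi_{n+1}$. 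Letting $n$ grow, this yields the distributional Pearson equation $D(\phi\,\mathbf{u})=\psi\,\mathbf{u}$ with $\psi(x)=-\lambda_1'(x-m_1)$. By Theorem \ref{th:pearson} the sequence $\{\mu_n\}_{n\geqslant 0}$ is then pre-classical, and since $\det G_n\neq0$ by hypothesis it is classical.

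The step I would be most careful about is the non-degeneracy bookkeeping demanded by Definition \ref{def:classical sequence} and Theorem \ref{th:pearson}, namely $\deg\psi=1$ and $n\,a+d\neq0$ for all $n$. Both reduce to the orthogonal bases having nonzero diagonal entries. The condition $\deg\psi=1$ is exactly $\lambda_1'\neq0$, which holds since $\lambda_1'$ is a nonzero multiple of $\mathfrak{B}_n^{(1)}(\bar{e}_0,\bar{e}_0)=\sigma_0$. For $n\,a+d\neq0$, once the Pearson equation is in hand the identity $\langle\mathbf{v},p'q'\rangle=-\langle\mathbf{u},\mathcal{D}[p]\,q\rangle$ (with $\mathcal{D}$ as in \eqref{def:operatorD}) turns the relation $\langle\mathbf{v},P_j'q'\rangle=\lambda_j'\langle\mathbf{u},P_jq\rangle$ into $\mathcal{D}[P_j]=-\lambda_j'\,P_j$; comparing leading coefficients gives $\lambda_j'=-\lambda_j=-j\,[(j-1)a+d]$, and since each $\lambda_j'\neq0$ we obtain $(j-1)a+d\neq0$ for every $j\geqslant1$, i.e. $n\,a+d\neq0$ for all $n\geqslant0$. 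I would therefore structure the converse so that the eigen-relation is established first and then used both to extract $\psi$ and to certify these non-vanishing conditions, keeping the argument free of any circular appeal to $\mathtt{D}_{n+1}$ (whose very definition presupposes $n\,a+d\neq0$).
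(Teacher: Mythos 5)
Your forward implication coincides with the paper's: both reduce it to Theorem \ref{th:choleskysigma} together with Theorem \ref{th:choleskyorth}. The real divergence is in the converse, where you take a genuinely different route. The paper never leaves the matrix level and uses only the orthogonality relations involving the single vector $\bar{s}_{n,0}^{(1)}$: setting $\lambda_1=-\mathfrak{B}_n^{(1)}(\bar{s}_{n,0}^{(1)},\bar{s}_{n,0}^{(1)})/h_1$, it collapses those relations, via invertibility of $S_{n+1}^{\top}$, into the single vector identity \eqref{eq:hahn}, whose entries are read off directly as \eqref{eq:ttr-moments} with $d=\lambda_1$ and $e=\lambda_1\,s_{1,0}$; Theorem \ref{th:pearson} is never invoked. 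You instead exploit the full orthogonality hypothesis to obtain the generalized eigenvalue relation $W_{n+1}\,\bar{s}_{n+1,j}=\lambda_j'\,G_{n+1}\,\bar{s}_{n+1,j}$ (your radical argument for the index $0$ and the expansion in the $G_{n+1}$-orthogonal basis are both correct), transport it to the functional side as $\langle\mathbf{v},P_j'\,q'\rangle=\lambda_j'\,\langle\mathbf{u},P_j\,q\rangle$, extract the Pearson equation from $j=1$, and close with Theorem \ref{th:pearson}. This costs more machinery but yields more: the identities $\mathcal{D}[P_j]=-\lambda_j'\,P_j$ and $\lambda_j'=-\lambda_j$ come for free, and you explicitly confront the requirements $\deg\psi=1$ and $n\,a+d\ne0$ imposed by Definition \ref{def:classical sequence} and Theorem \ref{th:pearson}, which the printed proof passes over in silence.

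At exactly that delicate point, however, your certification is circular rather than a proof. The hypothesis is pairwise orthogonality; it does not assert that the self-pairings $h_j^{(1)}=\mathfrak{B}_n^{(1)}(\bar{s}_{n,j}^{(1)},\bar{s}_{n,j}^{(1)})$ are nonzero. Since $\lambda_1'=\sigma_0/h_1$, saying that $\lambda_1'\ne0$ ``because it is a nonzero multiple of $\sigma_0$'' begs the question of why $\sigma_0\ne0$; and the claim ``each $\lambda_j'\ne0$'' is precisely the statement $h_{j-1}^{(1)}\ne0$, which (once Pearson holds) is equivalent to the admissibility condition $(j-1)\,a+d\ne0$ you are trying to establish. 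Both gaps can be closed from the standing hypothesis $\det G_n\ne 0$. First, if $\sigma_0=0$, then $\lambda_1'=0$ and your $j=1$ relation gives $\langle\phi\,\mathbf{u},q'\rangle=0$ for all $q$, i.e.\ $\sigma_k=0$ for every $k$; then $a\,\mu_{k+2}+b\,\mu_{k+1}+c\,\mu_k=0$ with constant coefficients, so the Hankel matrices $G_n$ have rank at most two and $\det G_m=0$ for some $m\leqslant 2$, a contradiction; hence $\sigma_0\ne 0$ and $\deg\psi=1$. Second, suppose $n\,a+d=0$ for some $n\geqslant 1$. Your derivation of $\mathcal{D}[P_{n+1}]=-\lambda_{n+1}'\,P_{n+1}$ does not need $\lambda_{n+1}'\ne0$, and comparing leading coefficients gives $\lambda_{n+1}'=-(n+1)(n\,a+d)=0$, so $y:=P_{n+1}'$ satisfies $\phi\,y'+\psi\,y=0$; consequently $D(\phi\,y\,\mathbf{u})=(\phi\,y'+\psi\,y)\,\mathbf{u}=\mathtt{0}$, and since a moment functional with vanishing distributional derivative is zero, $\phi\,y\,\mathbf{u}=\mathtt{0}$. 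But $\phi\,y$ is a nonzero polynomial, so its coefficient vector is a null vector of $G_m$ with $m=\deg(\phi\,y)$, contradicting $\det G_m\ne0$. With these two repairs your argument is complete---and in fact more complete than the paper's own proof, which concludes classicality from \eqref{eq:ttr-moments} without ever verifying $d\ne0$ or $n\,a+d\ne0$, conditions that Definition \ref{def:classical sequence} requires.
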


\begin{proof}
If $\{\mu_n\}_{n\geqslant 0}$ is classical, then it follows from Theorem \ref{th:choleskysigma} that
$$
S_{n,1}\,G_n^{(1)}\,S_{n,1}^{\top}\,=\,H_{n,1}, \quad n\geqslant 0,
$$
where
$$
S_{n,1}^{\top}=\left[\bar{s}_{n,0}^{(1)}\ \bar{s}_{n,1}^{(1)}\, \ldots\, \bar{s}_{n,n}^{(1)} \right],
$$
and $H_{n,1}=\textnormal{diag}[h_{0}^{(1)}, \ldots, h_{n}^{(1)} ]$ with
$$
h^{(1)}_{j}=-\frac{\lambda_{j+1}}{(j+1)^2}\,h_{j+1}, \quad j\geqslant 0,
$$
and $\lambda_j=j\,[(j-1)\,a+d]$. This implies that
$$
\mathfrak{B}_n^{(1)}(\bar{s}_{n,i}^{(1)},\bar{s}_{n,j}^{(1)})=h^{(1)}_j\,\delta_{i,j}, \quad 0\leqslant i,j\leqslant n.
$$
This proves the necessary condition.

Conversely, for $n\geqslant 0$, on one hand we have
$$
\mathfrak{B}_n^{(1)}(\bar{s}_{n,k}^{(1)},\bar{s}_{n,0}^{(1)})=h_0^{(1)}\,\delta_{k,0} 
$$
or, equivalently,
$$
\left(\bar{s}_{n,k}^{(1)}\right)^{\top}\,G_n^{(1)}\,\bar{s}_{n,0}^{(1)}\,=\,\frac{1}{k+1}\,\bar{s}_{n+1,k+1}^{\top}\,N_{n+1}^{\top}\,G_n^{(1)}\,\bar{s}_{n,0}^{(1)}=-\lambda_{1}\,h_{1}\,\delta_{k,0}.
$$
Using the fact that $s_{n,0}^{(1)}=\bar{e}_0$ where, recall, $\bar{e}_0$ is the first column of the identity matrix of order $n+1$, we write
$$
\bar{s}_{n+1,k+1}^{\top}\,N_{n+1}^{\top}\,\begin{bmatrix}
\sigma_0 \\ \sigma_1 \\ \vdots \\ \sigma_n
\end{bmatrix}=-(k+1)\,\lambda_{1}\,h_{1}\,\delta_{k,0}.
$$
On the other hand
$$
\mathfrak{B}_{n+1}(\bar{s}_{n+1,k+1},\bar{s}_{n+1,1})=\bar{s}_{n+1,k+1}^{\top}\,G_{n+1}\,\bar{s}_{n+1,1}= h_1\,\delta_{k,0}=h_1\,(k+1)\,\delta_{k,0}.
$$
Therefore,
$$
\bar{s}_{n+1,k+1}^{\top}\,N_{n+1}^{\top}\,\begin{bmatrix}
\sigma_0 \\ \sigma_1 \\ \vdots \\ \sigma_n
\end{bmatrix}=-\lambda_{1}\,\bar{s}_{n+1,k+1}^{\top}\,G_{n+1}\,\bar{s}_{n+1,1},
$$
and, thus,
$$
\bar{s}_{n+1,k+1}^{\top}\left(N_{n+1}^{\top}\,\begin{bmatrix}
\sigma_0 \\ \sigma_1 \\ \vdots \\ \sigma_n
\end{bmatrix}+\lambda_{1}\,G_{n+1}\,\bar{s}_{n+1,1} \right)=0, \quad 0\leqslant k \leqslant n.
$$
Since the first row of $N_{n+1}^{\top}$ is $[0 \ 0 \ldots 0]$ and $\bar{s}_{n+1,0}^{\top}\,G_{n+1}\,\bar{s}_{n+1,1}=0$, we have
$$
S_{n+1}^{\top}\left(N_{n+1}^{\top}\,\begin{bmatrix}
\sigma_0 \\ \sigma_1 \\ \vdots \\ \sigma_n
\end{bmatrix}+\lambda_{1}\,G_{n+1}\,\bar{s}_{n+1,1} \right)=\mathtt{0},
$$
and, consequently, 
\begin{equation}\label{eq:hahn}
N_{n+1}^{\top}\,\begin{bmatrix}
\sigma_0 \\ \sigma_1 \\ \vdots \\ \sigma_n
\end{bmatrix}+\lambda_{1}\,G_{n+1}\,\bar{s}_{n+1,1} =\mathtt{0},
\end{equation}
where we have used the fact that $S_n^{\top}$ is an invertible matrix. If
$$
\bar{s}_{n+1,1}=[s_{1,0} \ 1 \ 0 \ldots 0]^{\top},
$$
then let $d=\lambda_1$ and $e=\lambda_1\,s_{1,0}$. Observe that by Theorem \ref{th:choleskySn+1}, $s_{1,0}$ is independent of $n$. In this way, the entries of \eqref{eq:hahn} read
\begin{align*}
&d\,\mu_1+e\,\mu_0 = 0,\\
&k\,\sigma_{k-1}+d\,\mu_{k+1}+e\,\mu_k=(k\,a+d)\,\mu_{k+1}+(k\,b+e)\,\mu_k+k\,c\,\mu_{k-1}=0, \quad 1\leqslant k \leqslant n,
\end{align*}
which proves that $\{\mu_n\}_{n\geqslant 0}$ satisfies \eqref{eq:ttr-moments} and, thus, $\{\mu_n\}_{n\geqslant 0}$ is classical.
\end{proof}

Corollary \ref{coro:sigmaclassical} shows that the classical character of $\{\mu_n\}_{n\geqslant 0}$ is inherited by $\{\sigma_n\}_{n\geqslant 0}$ allowing us to apply all of our previous results about classical sequences to $\{\sigma_n\}_{n\geqslant 0}$ and $\{G_n^{(1)}\}_{n\geqslant 0}$. Therefore, we can define a new sequence $\{\sigma_{n}^{(2)}\}_{n\geqslant 0}$ as follows:
$$
\sigma_n^{(2)} = a \,\sigma_{n+2} + b\, \sigma_{n+1} + c\, \sigma_n, \quad n\geqslant 0.
$$
We denote by $\{ G_n^{(2)}\}_{n \geqslant 0}$ the sequence of $(n+1)\times (n+1)$ matrices with 
\begin{equation*}\label{def:Gn2}
G_{0}^{(2)} = \sigma_0^{(2)}, \quad \text{and} \quad G_n^{(2)}= \left[\begin{array}{ccc|c}
 & & & \sigma_n^{(2)} \\
 & G_{n-1}^{(2)} & &\vdots \\
 & & &\sigma_{2n-1}^{(2)}\\
\hline 
\sigma_n^{(2)} & \ldots & \sigma_{2n-1}^{(2)} & \sigma_{2n}^{(2)}
\end{array}\right], \quad n\geqslant 1.
\end{equation*}
By Theorem \ref{th:sigmapreclassical}, $\{\sigma_{n}^{(2)}\}_{n\geqslant 0}$ is pre-classical satisfying
$$
(n\,a+d_2)\,\sigma_{n+1}^{(2)}+(n\,b+e_2)\,\sigma_n^{(2)}+n\,c\,\sigma_{n-1}^{(2)}\,=\,0, \quad n\geqslant 0,
$$
where $d_2=d+4\,a$ and $e_2=e+2\,b$. Moreover, if $\det G_n^{(2)}\ne 0$ for $n\geqslant 0$, then  $\{\sigma_{n}^{(2)}\}_{n\geqslant 0}$ is classical and, by Theorem \ref{th:Hahn-type}, the set $\{ \bar{s}_{n,0}^{(2)}, \ldots, \bar{s}_{n,n}^{(2)}\}$ of vectors in $\mathbb{R}^{n+1}$ with
$$
\bar{s}_{n,j}^{(2)}=\frac{1}{j+1}\,N_{n+1}\,\bar{s}_{n+1,j+1}^{(1)}, \quad 0\leqslant j \leqslant n,
$$
constitutes an orthogonal basis for $\mathbb{R}^{n+1}$ with respect to the bilinear form $\mathfrak{B}_n^{(2)}$ associated with $\{\sigma_n^{(2)}\}_{n\geqslant 0}$, that is,
$$
\mathfrak{B}^{(2)}_n(\bar{s}_{n,j}^{(2)},\bar{s}_{n,k}^{(2)})=h_j^{(2)}\,\delta_{j,k}, \quad 0\leqslant j, k \leqslant n,
$$
where
$$
h^{(2)}_{j}=-\frac{\lambda_{j+1}^{(1)}}{(j+1)^2}\,h_{j+1}^{(2)}, \quad j\geqslant 0 ,
$$
and $\lambda_j^{(1)}=j\,[(j-1)\,a+d_1]$.

Iterating this idea, we obtain the following result.

\begin{corollary}\label{th:higher-Hahn-type}
Let $\{\mu_n\}_{n\geqslant 0}$ be a classical sequence of real numbers. Let $S_{n}^{-1}\,H_{n}\,S_{n}^{-\top}$ be the Cholesky factorization of $G_{n}$ and let $\bar{s}_{n,0},\bar{s}_{n,1},\ldots,\bar{s}_{n,n}$ denote the columns of $S_{n}^{\top}$. For each $k\geqslant 1$, define the sequence of real numbers $\{\sigma_n^{(k)}\}_{n\geqslant 0}$ by
$$
\sigma_n^{(k)}\, =\, a\, \sigma_{n+2}^{(k-1)} + b\, \sigma_{n+1}^{(k-1)} + c\, \sigma_n^{(k-1)}, \quad n\geqslant 0,
$$
where $\sigma_n^{(0)}=\mu_n$ for $n\geqslant 0$. Then $\{ \sigma_n^{(k)} \}_{n \geqslant 0}$ is classical satisfying
$$
(n\,a+d_k)\,\sigma_{n+1}^{(k)}+(n\,b+e_k)\,\sigma_n^{(k)}+n\,c\,\sigma_{n-1}^{(k)}\,=\,0, \quad n\geqslant 0,
$$
where $d_k=d+2\,k\,a$ and $e_k=e+k\,b$. Moreover, the set $\{ \bar{s}_{n,0}^{(k)}, \ldots, \bar{s}_{n,n}^{(k)}\}$ of vectors in $\mathbb{R}^{n+1}$ with
$$
\bar{s}_{n,j}^{(k)}=\frac{1}{j+1}\,N_{n+1}\,\bar{s}_{n+1,j+1}^{(k-1)}, \quad 0\leqslant j \leqslant n,
$$
where $s_{n,j}^{(0)}=\bar{s}_{n,j}$, constitutes an orthogonal basis for $\mathbb{R}^{n+1}$ with respect to the bilinear form associated with $\{\sigma_n^{(k)}\}_{n\geqslant 0}$.
\end{corollary}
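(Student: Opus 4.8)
The plan is to proceed by induction on $k\geqslant 1$, the key observation being that the passage from $\{\sigma_n^{(k-1)}\}_{n\geqslant 0}$ to $\{\sigma_n^{(k)}\}_{n\geqslant 0}$ is precisely the $\sigma$-transform already analyzed in the case $k=1$, now applied to $\{\sigma_n^{(k-1)}\}_{n\geqslant 0}$ in place of $\{\mu_n\}_{n\geqslant 0}$ while keeping the \emph{same} quadratic $\phi(x)=a\,x^2+b\,x+c$ and shifting only the linear part $\psi$. The base case $k=1$ is exactly the combined content of Theorem \ref{th:sigmapreclassical}, Corollary \ref{coro:sigmaclassical}, and Theorem \ref{th:choleskysigma}, since $\sigma_n^{(1)}=a\,\mu_{n+2}+b\,\mu_{n+1}+c\,\mu_n=\sigma_n$, the recurrence parameters are $d_1=d+2\,a$ and $e_1=e+b$, and $\bar{s}_{n,j}^{(1)}=\frac{1}{j+1}\,N_{n+1}\,\bar{s}_{n+1,j+1}$ are the columns of the transpose of the triangular Cholesky factor of $G_n^{(1)}$.

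For the inductive step I would assume that $\{\sigma_n^{(k-1)}\}_{n\geqslant 0}$ is classical, that it satisfies \eqref{eq:ttr-moments} with parameters $d_{k-1}=d+2\,(k-1)\,a$ and $e_{k-1}=e+(k-1)\,b$, and that the vectors $\bar{s}_{n,j}^{(k-1)}$ are the columns of the transpose of the unit-triangular factor in the Cholesky factorization of $G_n^{(k-1)}$. Before invoking the $k=1$ results with $\{\sigma_n^{(k-1)}\}_{n\geqslant 0}$ as input, their hypotheses must be checked to survive the iteration: the condition $|a|+|b|+|c|>0$ is untouched because $a,b,c$ are held fixed, and the non-degeneracy condition $n\,a+d_{k-1}\ne 0$ for $n\geqslant 0$ (see Definition \ref{def:classical sequence}) follows from the original hypothesis $m\,a+d\ne 0$ for all $m\geqslant 0$ upon taking $m=n+2\,(k-1)\geqslant 0$. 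This bookkeeping is the only genuinely delicate point, and I would state it explicitly.

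With the hypotheses secured, the three assertions follow in turn. Theorem \ref{th:sigmapreclassical}, applied to $\{\sigma_n^{(k-1)}\}_{n\geqslant 0}$, shows that $\{\sigma_n^{(k)}\}_{n\geqslant 0}$ is pre-classical with updated parameters $d_{k-1}+2\,a=d+2\,k\,a=d_k$ and $e_{k-1}+b=e+k\,b=e_k$, which is exactly the claimed recurrence. Theorem \ref{th:choleskysigma} then yields the Cholesky factorization of $G_n^{(k)}$ whose triangular factor has columns $\bar{s}_{n,j}^{(k)}=\frac{1}{j+1}\,N_{n+1}\,\bar{s}_{n+1,j+1}^{(k-1)}$ and whose diagonal entries are $h_j^{(k)}=-\frac{\lambda_{j+1}^{(k-1)}}{(j+1)^2}\,h_{j+1}^{(k-1)}$; since $\lambda_{j+1}^{(k-1)}=(j+1)\,[j\,a+d_{k-1}]\ne 0$ by the non-degeneracy just verified and $h_{j+1}^{(k-1)}\ne 0$ by the inductive hypothesis, every $h_j^{(k)}\ne 0$, so $\det G_n^{(k)}\ne 0$ and $\{\sigma_n^{(k)}\}_{n\geqslant 0}$ is classical (this is Corollary \ref{coro:sigmaclassical} applied to $\{\sigma_n^{(k-1)}\}_{n\geqslant 0}$). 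Being the columns of a Cholesky factor, the $\bar{s}_{n,j}^{(k)}$ form an orthogonal basis for $\mathbb{R}^{n+1}$ with respect to the bilinear form associated with $\{\sigma_n^{(k)}\}_{n\geqslant 0}$ by Theorem \ref{th:choleskyorth}, and the inductive hypothesis is thereby reproduced at level $k$. The Principle of Induction then completes the proof.

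Conceptually, this iteration is the matrix-level counterpart of Theorem \ref{th:polynomials-Q}: holding $\phi$ fixed while replacing $\psi$ by $\psi+k\,\phi'$ (equivalently, $d,e$ by $d_k=d+2\,k\,a$ and $e_k=e+k\,b$) is the statement that the successive $\sigma$-transforms are again classical of the same type. The main difficulty is thus not computational but lies entirely in verifying that the structural hypotheses of the base-case results—principally the non-degeneracy $n\,a+d_k\ne 0$ and the preservation of $\phi$—are inherited under the shift of $\psi$.
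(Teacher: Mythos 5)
Your proposal is correct and follows essentially the same route as the paper, which presents this corollary simply as ``iterating'' the chain Theorem \ref{th:sigmapreclassical} $\rightarrow$ Corollary \ref{coro:sigmaclassical} $\rightarrow$ Theorem \ref{th:choleskysigma} applied to $\{\sigma_n^{(k-1)}\}_{n\geqslant 0}$ in place of $\{\mu_n\}_{n\geqslant 0}$. Your explicit induction, and in particular the verification that the non-degeneracy condition $n\,a+d_{k-1}\ne 0$ is inherited via $n\,a+d_{k-1}=(n+2(k-1))\,a+d\ne 0$, is precisely the bookkeeping the paper leaves implicit, so your write-up is a faithful (and slightly more careful) rendering of the intended argument.
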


Observe that the vectors $\{\bar{s}_{n,0}^{(k)}, \ldots, \bar{s}_{n,n}^{(k)}\}$ in Corollary \ref{th:higher-Hahn-type} can be written in terms of the vectors $\bar{s}_{n,0},\bar{s}_{n,1},\ldots,\bar{s}_{n,n}$ as follows: for each $k\geqslant 1$,
\begin{equation}\label{eq:longsn}
\bar{s}_{n,j}^{(k)}= \frac{1}{(j+1)_k}N_{n+1}\,N_{n+2}\cdots N_{n+k}\,\bar{s}_{n+k,j+k}, \quad 0\leqslant j \leqslant n,
\end{equation}
where $(\nu)_k=\nu\,(\nu+1)\cdots (\nu+k-1)$, $(\nu)_0=1$, denotes the Pochhammer symbol. If $\{ G_n^{(k)}\}_{n \geqslant 0}$ dentoes the sequence of $(n+1)\times (n+1)$ matrices with 
\begin{equation}\label{def:Gnk}
G_{0}^{(k)} = \sigma_0^{(k)}, \quad \text{and} \quad G_n^{(k)}= \left[\begin{array}{ccc|c}
 & & & \sigma_n^{(k)} \\
 & G_{n-1}^{(k)} & &\vdots \\
 & & &\sigma_{2n-1}^{(k)}\\
\hline 
\sigma_n^{(k)} & \ldots & \sigma_{2n-1}^{(k)} & \sigma_{2n}^{(k)}
\end{array}\right], \quad n\geqslant 1,
\end{equation}
then the orthogonality of $\{\bar{s}_{n,0}^{(k)}, \ldots, \bar{s}_{n,n}^{(k)}\}$ with respect to the bilinear form $\mathfrak{B}_n^{(k)}$ associated with $\{\sigma_n^{(k)}\}_{n\geqslant 0}$ is given by
\begin{equation}\label{eq:orthogonalityk}
\mathfrak{B}^{(k)}_n(\bar{s}_{n,j}^{(k)},\bar{s}_{n,i}^{(k)})\,=\,(\bar{s}_{n,j}^{(k)})^{\top}\,G_n^{(k)}\,\bar{s}_{n,i}^{(k)} \,=\,h_j^{(k)}\,\delta_{j,i}, \quad 0\leqslant i,j \leqslant n,
\end{equation}
where
$$
h^{(k)}_{j}=-\frac{\lambda_{j+1}^{(k-1)}}{(j+1)^2}\,h_{j+1}^{(k-1)}, \quad j\geqslant 0 ,
$$
and $\lambda_j^{(k)}=j\,[(j-1)\,a+d_k]$. Note that we can write
$$
h^{(k)}_{j}=(-1)^k\frac{\prod_{i=0}^{k-1}\lambda_{j+k-i}^{(i)}}{[(j+1)_k]^2}\,h_{j+k}, \quad j\geqslant 0 .
$$
Moreover, \eqref{eq:orthogonalityk} implies the Cholesky factorization of $G_n^{(k)}$:
$$
S_{n,k}\,G_n^{(k)}\,S_{n,k}^{\top}\,=\,H_{n,k}, \quad n\geqslant 0,
$$
where
$$
S_{n,k}^{\top}=\left[\bar{s}_{n,0}^{(k)}\ \bar{s}_{n,1}^{(k)}\, \ldots\, \bar{s}_{n,n}^{(k)} \right],
$$
and $H_{n,k}=\textnormal{diag}[h_{0}^{(k)}, \ldots, h_{n}^{(k)} ]$.

Let us reformulate the above results in terms of polynomials and moment functionals. Let $\{\mu_n\}_{n\geqslant 0}$ be a classical sequence of real numbers satisfying \eqref{eq:ttr-moments}, and let $\mathbf{u}$ be the moment functional defined as $\mu_n=\langle \mathbf{u}, x^n\rangle$, $n\geqslant 0$.  For each $k\geqslant 1$, the sequence of real numbers $\{\sigma_n^{(k)}\}_{n\geqslant 0}$ defined by
$$
\sigma_n^{(k)}\, =\, a\, \sigma_{n+2}^{(k-1)} + b\, \sigma_{n+1}^{(k-1)} + c\, \sigma_n^{(k-1)}, \quad n\geqslant 0,
$$
where $\sigma_n^{(0)}=\mu_n$ for $n\geqslant 0$, is the sequence of moments of the functional given by $\mathbf{v}_k\,=\,\phi^k\,\mathbf{u}$ where $\phi(x)=a\,x^2+b\,x+c$. Moreover, for $n\geqslant 0$, let $S_n^{-1}\,H_n\,S_n^{-\top}$ be the Cholesky factorization of $G_n$ and let $\bar{s}_{n,0},\bar{s}_{n,1},\ldots,\bar{s}_{n,n}$ denote the columns of $S_n^{\top}$. For $n\geqslant 0$, Corollary \ref{th:higher-Hahn-type} implies that the polynomials $\{Q_{0,k}(x), Q_{1,k}(x),\ldots,Q_{n,k}(x)\}$ given by 
$$
Q_{j,k}(x)\,=\,(\bar{s}_{n,j}^{(k)})^{\top}\,\mathtt{X}_n, \quad 0\leqslant j \leqslant n,
$$
satisfy $\langle \mathbf{v}_k, Q_{i,k}\,Q_{j,k}\rangle = h_j^{(k)}\,\delta_{i,j}$. Therefore, $\{Q_{0,k}(x), Q_{1,k}(x),\ldots,Q_{n,k}(x)\}$ constitutes an orthogonal basis for $\Pi_n$ with respect to $\mathbf{v}_k$. Furthermore, Theorem \ref{th:choleskySn+1} allows us to write
$$
Q_{n,k}(x)\,=\,(\bar{s}_{n,n}^{(k)})^{\top}\,\mathtt{X}_n, \quad n\geqslant 0,
$$
and, in this way, $\{Q_{n,k}(x)\}_{n\geqslant 0}$ is an MOPS associated with $\mathbf{v}_k$. We note that if 
$$
P_n(x)\,=\,\bar{s}_{n,n}^{\top}\,\mathtt{X}_n, \quad n\geqslant 0,
$$
then, by \eqref{eq:longsn}, we have
$$
Q_{n,k}(x)\,=\,\frac{P_{n+k}^{(k)}(x)}{(n+1)_k}, \quad n\geqslant 0,
$$
where $P_n^{(k)}(x)$ denotes the $k$-th order derivative of $P_n(x)$.

\subsection{First structure relation} 
For $n\geqslant 1$ and given real numbers $a,b$, and $c$, we define the $(n+3)\times (n+1)$ matrices
\begin{equation}\label{def:matrizPhi}
\Phi_n=\left[\begin{array}{c|c} 
 & \mathtt{0} \\
\Phi_{n-1} & c \\
 & b \\
\hline 
\mathtt{0} & a
\end{array}\right], \quad \Phi_0=\begin{bmatrix}
    c \\ b \\ a
\end{bmatrix}.
\end{equation}

\begin{lemma}
    Let $\{\mu_n\}_{n\geqslant 0}$ and $\{\sigma_n\}_{n\geqslant 0}$ be sequences of real numbers satisfying
    $$
    \sigma_n=a\,\mu_{n+2}+b\,\mu_{n+1}+c\,\mu_n, \quad n\geqslant 0,
    $$
    where $a,b,c\in \mathbb{R}$.  Then, for $n\geqslant 2$, 
    \begin{equation}\label{eq:GPhi}
    G_n^{(1)}\,\begin{bmatrix}
        \bar{v} \\ 0 \\ 0
    \end{bmatrix}\,=\,G_n\,\Phi_{n-2}\,\bar{v}, \quad \forall\, \bar{v}\in \mathbb{R}^{n-1}.
    \end{equation}
\end{lemma}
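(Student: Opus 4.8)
\emph{The plan} is to prove the identity by comparing the two sides coordinate by coordinate, after recognizing $\Phi_{n-2}$ as the matrix of multiplication by $\phi(x)=a\,x^2+b\,x+c$. First I would unwind the recursive Definition~\eqref{def:matrizPhi} to show that, for a vector $\bar v=(v_0,\dots,v_{n-2})^{\top}\in\mathbb{R}^{n-1}$ (with the convention $v_j=0$ for $j<0$ or $j>n-2$), the $k$-th coordinate of $\Phi_{n-2}\,\bar v$ is
$$
(\Phi_{n-2}\,\bar v)_k = a\,v_{k-2}+b\,v_{k-1}+c\,v_k, \qquad 0\leqslant k\leqslant n.
$$
This is precisely the statement that $\Phi_{n-2}$ represents, in the monomial basis, the map $p\mapsto \phi\,p$ sending $\Pi_{n-2}$ into $\Pi_n$; it can be verified by a short induction on $n$ from the block form of $\Phi_{n-2}$, and this is the one genuinely structural step in the argument.

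With this in hand I would fix $i$ with $0\leqslant i\leqslant n$ and compute the $i$-th coordinate of each side. On the left, since only the first $n-1$ entries of $[\bar v^{\top}\ 0\ 0]^{\top}$ are nonzero,
$$
\left(G_n^{(1)}\begin{bmatrix}\bar v\\ 0\\ 0\end{bmatrix}\right)_i=\sum_{j=0}^{n-2}\sigma_{i+j}\,v_j
=\sum_{j=0}^{n-2}\bigl(a\,\mu_{i+j+2}+b\,\mu_{i+j+1}+c\,\mu_{i+j}\bigr)\,v_j,
$$
where I have inserted the defining relation $\sigma_m=a\,\mu_{m+2}+b\,\mu_{m+1}+c\,\mu_m$. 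On the right, using the coordinate formula for $\Phi_{n-2}\,\bar v$,
$$
\bigl(G_n\,\Phi_{n-2}\,\bar v\bigr)_i=\sum_{k=0}^{n}\mu_{i+k}\,(\Phi_{n-2}\bar v)_k
=\sum_{k=0}^{n}\mu_{i+k}\,\bigl(a\,v_{k-2}+b\,v_{k-1}+c\,v_k\bigr).
$$
The conclusion then follows by reindexing the three sums on the right (setting $j=k-2$, $j=k-1$, and $j=k$ respectively) and matching them term by term with the left-hand side.

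The only point that requires care is the bookkeeping of index ranges at the boundary: after reindexing, the terms produced by $a\,v_{k-2}$ and $b\,v_{k-1}$ formally reach $k=n$ and $k=n+1$, but the convention $v_j=0$ for $j>n-2$ annihilates exactly these spurious contributions, so each reindexed sum collapses to $\sum_{j=0}^{n-2}$ and the moments invoked never exceed $\mu_{2n}$, i.e.\ they all lie within $G_n$. I expect this boundary accounting to be the main (though minor) obstacle. Note that no positivity, quasi-definiteness, or recurrence hypothesis on $\{\mu_n\}$ is needed, since the claim is a purely algebraic identity expressing $\langle \phi\,\mathbf{u},x^i p\rangle=\langle \mathbf{u},x^i\,\phi\,p\rangle$ at the level of Gram matrices. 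For this reason I would favour the direct computation above over an induction on $n$ built from the recursive block forms of $G_n$, $G_n^{(1)}$, and $\Phi_{n-2}$, although the latter would also work and would more closely mirror the inductive proofs used elsewhere in the paper.
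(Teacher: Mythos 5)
Your proposal is correct, but it follows a genuinely different route from the paper. The paper proves the identity by induction on $n$, exploiting the recursive block definitions: the vector $[\bar v^{\top}\ 0\ 0]^{\top}$ is split into its first $n-1$ coordinates plus its last nonzero coordinate, the induction hypothesis handles the first part after block multiplication, and the step closes with a (stated as straightforward) verification that the bordered matrix $\left[\begin{smallmatrix} G_k\Phi_{k-2} & * \\ * & * \end{smallmatrix}\right]$ equals $G_{k+1}\Phi_{k-1}$. Your argument instead unwinds everything to closed-form entries: $(G_n^{(1)})_{i,j}=\sigma_{i+j}$, $(G_n)_{i,k}=\mu_{i+k}$, and $(\Phi_{n-2}\bar v)_k=a\,v_{k-2}+b\,v_{k-1}+c\,v_k$, after which the identity is a reindexing of three sums, with the conventions $v_j=0$ for $j<0$ and $j>n-2$ absorbing the boundary terms (your bookkeeping of exactly which spurious terms arise in each of the three sums is slightly loose --- for the $a$-part the spurious indices are at the lower end $j=-2,-1$, for the $c$-part at the upper end $j=n-1,n$, and for the $b$-part at both --- but the substance is right and all moments stay within $\mu_{2n}$). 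What your approach buys is transparency and economy: it proves the statement for all $n\geqslant 2$ in one stroke, needs no hypothesis beyond the defining relation for $\sigma_n$, and exhibits the lemma as the Gram-matrix form of $\langle \phi\,\mathbf{u},x^i p\rangle=\langle \mathbf{u},x^i\,\phi\,p\rangle$. What the paper's induction buys is consistency with its announced bordering paradigm (properties of $G_n$ are propagated to $G_{n+1}$ by adding a row and column), and it never needs to leave the recursive block descriptions of $G_n^{(1)}$ and $\Phi_n$; indeed, the unproved final step of the paper's induction is essentially your direct computation restricted to the newly added column, so the two proofs share the same computational core.
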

\begin{proof}
We use induction to prove this result. For $n=2$, observe that, for all $v\in \mathbb{R}$,
$$
G_2^{(1)}\,\begin{bmatrix}
v \\ 0 \\ 0\end{bmatrix}=\begin{bmatrix}
    \sigma_0 \\ \sigma_1 \\ \sigma_2
\end{bmatrix}v=a\,v\,\begin{bmatrix}
    \mu_2 \\ \mu_3 \\ \mu_4
\end{bmatrix}+b\,v\,\begin{bmatrix}
\mu_1 \\ \mu_2 \\ \mu_3
\end{bmatrix}+c\,v\,\begin{bmatrix}
    \mu_0 \\ \mu_1 \\ \mu_2
\end{bmatrix} = G_2\,\Phi_0\,v.
$$
This proves the base case.

Suppose that \eqref{eq:GPhi} holds for some $k\geqslant 2$. Let
$$
\bar{v}=\begin{bmatrix}
    \nu_1 \\ \vdots \\ \nu_{k}
\end{bmatrix}\in \mathbb{R}^k.
$$
We compute
\begin{align*}
    G_{k+1}^{(1)}\begin{bmatrix}
        \bar{v} \\ 0 \\ 0
    \end{bmatrix}
=G_{k+1}^{(1)}\left(\begin{bmatrix}\nu_1 \\ \vdots \\ \nu_{k-1} \\ 0 \\ 0 \\ 0 \end{bmatrix}+\begin{bmatrix}
    0 \\ \vdots \\ 0 \\ \nu_{k} \\ 0 \\ 0
\end{bmatrix} \right).
\end{align*}
Multiplying by blocks and using the induction hypothesis, we get
\begin{align*}
 G_{k+1}^{(1)}\begin{bmatrix}
        \bar{v} \\ 0 \\ 0
    \end{bmatrix}&=  \left[\begin{array}{ccc|c}
 & & & \sigma_{k+1} \\
 & G_{k}^{(1)} & &\vdots \\
 & & &\sigma_{2k+1}\\
\hline 
\sigma_{k+1} & \ldots & \sigma_{2k+1} & \sigma_{2k+2}
\end{array}\right]\begin{bmatrix}\nu_1 \\ \vdots \\ \nu_{k-1} \\ 0 \\ 0 \\ 0 \end{bmatrix}+\nu_{k}\begin{bmatrix} \sigma_{k-1}\\ \vdots \\ \sigma_{2k-1}\\\sigma_{2k}\end{bmatrix}\\[6pt]
&= \left[\begin{array}{ccc} 
 & G_k\,\Phi_{k-2} & \\[-6pt]
\\
\hline
\sigma_{k+1} & \cdots & \sigma_{2k-1}
\end{array}\right]\begin{bmatrix}\nu_1 \\ \vdots \\ \nu_{k-1} \end{bmatrix}+\nu_{k}\begin{bmatrix} \sigma_{k-1}\\ \vdots\\ \sigma_{2k-1} \\ \sigma_{2k}\end{bmatrix}\\[3pt]
&=\left[\begin{array}{ccc|c}
 & & & \sigma_{k-1} \\
 & G_k\,\Phi_{k-2} & &\vdots \\
 & & &\sigma_{2k-1}\\
\hline 
\sigma_{k-1} & \ldots & \sigma_{2k-1} & \sigma_{2k}
\end{array}\right]\,\bar{v}.
\end{align*} 
Then, it is straightforward to verify that
$$
\left[\begin{array}{ccc|c}
 & & & \sigma_{k-1} \\
 & G_k\,\Phi_{k-2} & &\vdots \\
 & & &\sigma_{2k-1}\\
\hline 
\sigma_{k-1} & \ldots & \sigma_{2k-1} & \sigma_{2k}
\end{array}\right]= \left[\begin{array}{ccc|c}
 & & & \mu_{k+1} \\
 & G_{k} & &\vdots \\
 & & &\mu_{2k+1}\\
\hline 
\mu_{k+1} & \ldots & \mu_{2k+1} & \mu_{2k+2}
\end{array}\right]\left[\begin{array}{c|c} 
 & \mathtt{0} \\
\Phi_{k-2} & c \\
 & b \\
\hline 
\mathtt{0} & a
\end{array}\right].
$$
This proves that
$$
    G_{k+1}^{(1)}\,\begin{bmatrix}
        \bar{v} \\ 0 \\ 0
    \end{bmatrix}\,=\,G_{k+1}\,\Phi_{k-1}\,\bar{v},
$$
and, thus, \eqref{eq:GPhi} holds for $n\geqslant 2$.
\end{proof}

We have the following characterization of classical sequences of real numbers.

\begin{theorem}\label{th:firststructurerelation}
Let $\{\mu_n\}_{n\geqslant 0}$ be a sequence of real numbers such that $\det G_n\ne 0$ for $n\geqslant 0$. Let $S_{n}^{-1}\,H_{n}\,S_{n}^{-\top}$ be the Cholesky factorization of $G_{n}$ and let $\bar{s}_{n,0},\bar{s}_{n,1},\ldots,\bar{s}_{n,n}$ denote the columns of $S_{n}^{\top}$. Then $\{\mu_n\}_{n\geqslant 0}$ is classical if and only if there are real numbers $a,b,c$ satisfying 
$$
|a|+|b|+|c|>0,
$$
and real numbers $a_j,b_j,c_j$, $j\geqslant 0$, with $c_j\ne 0$, such that the set $\{ \bar{s}_{n,0}^{(1)}, \ldots, \bar{s}_{n,n}^{(1)}\}$ of vectors in $\mathbb{R}^{n+1}$ with
$$
\bar{s}_{n,j}^{(1)}=\frac{1}{j+1}\,N_{n+1}\,\bar{s}_{n+1,j+1}, \quad 0\leqslant j \leqslant n,
$$
satisfy
\begin{equation}\label{eq:firststrel}
\Phi_{n-2}\,\bar{s}_{n-2,j}^{(1)} = a_j\,\bar{s}_{n,j+2}+b_j\,\bar{s}_{n,j+1}+c_j\,\bar{s}_{n,j}, \quad 0\leqslant j \leqslant n-2,
\end{equation}
with $\Phi_n$ as defined in \eqref{def:matrizPhi}
\end{theorem}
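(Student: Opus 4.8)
The plan is to prove both implications from three ingredients already at hand: the just-proved identity~\eqref{eq:GPhi}, the $\mathfrak{B}_n$-orthogonality of the columns $\bar{s}_{n,0},\ldots,\bar{s}_{n,n}$ of $S_n^{\top}$ (Theorem~\ref{th:choleskyorth}), and the \emph{stability under zero-padding} of the Cholesky vectors. The last point is the linchpin. Since $S_{n+1}^{\top}$ is unit upper triangular, Theorem~\ref{th:choleskySn+1} gives $\bar{s}_{n+1,j}=[\bar{s}_{n,j}^{\top}\ 0]^{\top}$ for $0\leqslant j\leqslant n$, and feeding this through the block form of $N_{n+1}$ yields the analogous $\bar{s}_{n,j}^{(1)}=[(\bar{s}_{n-2,j}^{(1)})^{\top}\ 0\ 0]^{\top}$ for $0\leqslant j\leqslant n-2$. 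Consequently, by~\eqref{eq:GPhi},
\begin{equation*}
G_n\,\Phi_{n-2}\,\bar{s}_{n-2,j}^{(1)}=G_n^{(1)}\,[(\bar{s}_{n-2,j}^{(1)})^{\top}\ 0\ 0]^{\top}=G_n^{(1)}\,\bar{s}_{n,j}^{(1)},
\end{equation*}
which is exactly what converts a $\mathfrak{B}_n$-pairing against $\Phi_{n-2}\bar{s}_{n-2,j}^{(1)}$ into a $\mathfrak{B}_n^{(1)}$-pairing against $\bar{s}_{n,j}^{(1)}$.

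For the direct implication, assume $\{\mu_n\}_{n\geqslant 0}$ is classical. Then $\{\sigma_n\}_{n\geqslant 0}$ is classical (Corollary~\ref{coro:sigmaclassical}) and, by Theorem~\ref{th:choleskysigma}, the vectors $\bar{s}_{n,0}^{(1)},\ldots,\bar{s}_{n,n}^{(1)}$ are $\mathfrak{B}_n^{(1)}$-orthogonal. I would expand $\Phi_{n-2}\bar{s}_{n-2,j}^{(1)}$ in the $\mathfrak{B}_n$-orthogonal basis $\{\bar{s}_{n,i}\}$, writing $\Phi_{n-2}\bar{s}_{n-2,j}^{(1)}=\sum_{i=0}^{n}\gamma_{j,i}\,\bar{s}_{n,i}$ with $\gamma_{j,i}=\mathfrak{B}_n(\Phi_{n-2}\bar{s}_{n-2,j}^{(1)},\bar{s}_{n,i})/h_i$. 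Two cutoffs collapse the sum to the three claimed terms. For the upper cutoff, $\Phi_{n-2}\bar{s}_{n-2,j}^{(1)}$ is supported on the first $j+3$ coordinates (it is the coordinate vector of $\phi(x)$ times a polynomial of degree $j$), hence lies in $\operatorname{span}\{\bar{s}_{n,0},\ldots,\bar{s}_{n,j+2}\}$, forcing $\gamma_{j,i}=0$ for $i>j+2$. For the lower cutoff, the displayed identity gives $\gamma_{j,i}\,h_i=\mathfrak{B}_n^{(1)}(\bar{s}_{n,i},\bar{s}_{n,j}^{(1)})$, and since $\bar{s}_{n,i}$ has degree $i<j$ it lies in $\operatorname{span}\{\bar{s}_{n,0}^{(1)},\ldots,\bar{s}_{n,i}^{(1)}\}$, which is $\mathfrak{B}_n^{(1)}$-orthogonal to $\bar{s}_{n,j}^{(1)}$; thus $\gamma_{j,i}=0$ for $i<j$. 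Setting $c_j=\gamma_{j,j}$, $b_j=\gamma_{j,j+1}$, $a_j=\gamma_{j,j+2}$ produces~\eqref{eq:firststrel}; the leading-coefficient match gives $\mathfrak{B}_n^{(1)}(\bar{s}_{n,j},\bar{s}_{n,j}^{(1)})=h_j^{(1)}$, so $c_j=h_j^{(1)}/h_j\neq 0$, and the padding stability of the four vectors involved shows that the $\gamma_{j,i}$ do not depend on $n$.

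For the converse, assume~\eqref{eq:firststrel} holds with the stated constants, and set $\sigma_n=a\mu_{n+2}+b\mu_{n+1}+c\mu_n$ so that~\eqref{eq:GPhi} is available. The plan is to show that~\eqref{eq:firststrel} forces $\mathfrak{B}_n^{(1)}$-orthogonality of $\{\bar{s}_{n,j}^{(1)}\}$ and then to invoke Theorem~\ref{th:Hahn-type}. Indeed, for $i<j$ the displayed identity together with~\eqref{eq:firststrel} gives
\begin{equation*}
\mathfrak{B}_n^{(1)}(\bar{s}_{n,i}^{(1)},\bar{s}_{n,j}^{(1)})=(\bar{s}_{n,i}^{(1)})^{\top}G_n\big(a_j\bar{s}_{n,j+2}+b_j\bar{s}_{n,j+1}+c_j\bar{s}_{n,j}\big),
\end{equation*}
and each term vanishes because $\bar{s}_{n,i}^{(1)}$ has degree $i$ and is therefore $\mathfrak{B}_n$-orthogonal to $\bar{s}_{n,m}$ for every $m>i$, in particular for $m\in\{j,j+1,j+2\}$. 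Symmetry of $\mathfrak{B}_n^{(1)}$ covers the case $i>j$, so $\{\bar{s}_{n,j}^{(1)}\}$ is an orthogonal basis for $\mathfrak{B}_n^{(1)}$, and Theorem~\ref{th:Hahn-type} then yields that $\{\mu_n\}_{n\geqslant 0}$ is classical.

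The main obstacle is not any single computation but the bookkeeping that makes the padding stability of $\bar{s}_{n,j}^{(1)}$ rigorous and that guarantees the constants $a_j,b_j,c_j$ are genuinely independent of $n$; once the identity $G_n\Phi_{n-2}\bar{s}_{n-2,j}^{(1)}=G_n^{(1)}\bar{s}_{n,j}^{(1)}$ is in place, both directions reduce to degree and orthogonality arguments. A minor point to monitor is the range of $n$: since~\eqref{eq:GPhi} requires $n\geqslant 2$, the orthogonality established in the converse holds for $n\geqslant 2$, which already suffices to run the Hahn-type argument for every moment index.
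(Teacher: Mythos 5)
Your direct implication is essentially the paper's own argument: expand $\Phi_{n-2}\,\bar{s}_{n-2,j}^{(1)}$ in the $\mathfrak{B}_n$-orthogonal basis $\{\bar{s}_{n,i}\}$, kill the high coefficients by the support bound on $\Phi_{n-2}\,\bar{s}_{n-2,j}^{(1)}$, kill the low ones by converting the $\mathfrak{B}_n$-pairing into a $\mathfrak{B}_n^{(1)}$-pairing via \eqref{eq:GPhi} and the zero-padding identity $\bar{s}_{n,j}^{(1)}=[(\bar{s}_{n-2,j}^{(1)})^{\top}\ 0\ 0]^{\top}$, and identify $c_j=h_j^{(1)}/h_j\neq 0$; the independence of $n$ is justified exactly as in the paper (Theorems \ref{th:choleskySn+1} and \ref{th:choleskysigma}). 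Your converse, however, is a genuinely different route. The paper does not re-derive orthogonality of the $\bar{s}^{(1)}$-family: it pairs \eqref{eq:firststrel} against $\bar{s}_{n,0}^{\top}\,G_n$, compares the result with $\bar{s}_{n-1,1}^{\top}\,G_{n-1}\,\bar{s}_{n-1,j+1}$, multiplies by the invertible $S_{n-1}^{\top}$ to obtain a vector identity, and reads off the moment recurrence \eqref{eq:ttr-moments} with $d=-c_0\,h_0/h_1$ and $e=d\,s_{1,0}$. You instead show that the structure relation forces $\mathfrak{B}_n^{(1)}$-orthogonality of $\{\bar{s}_{n,j}^{(1)}\}$ and then invoke Theorem \ref{th:Hahn-type}. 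This is legitimate (no circularity, since the Hahn-type theorem precedes this one) and conceptually economical: it reuses the Pearson-extraction already performed inside the Hahn-type converse instead of repeating that computation.

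One step in your converse must be made explicit, because as written it overreaches. Both the identity $G_n^{(1)}\,\bar{s}_{n,j}^{(1)}=G_n\,\Phi_{n-2}\,\bar{s}_{n-2,j}^{(1)}$ and the hypothesis \eqref{eq:firststrel} are available only for $j\leqslant n-2$, so at a fixed level $n$ your computation yields $\mathfrak{B}_n^{(1)}(\bar{s}_{n,i}^{(1)},\bar{s}_{n,j}^{(1)})=0$ only for $i<j\leqslant n-2$. The pairs involving $j=n-1$ or $j=n$ are not covered at level $n$, yet Theorem \ref{th:Hahn-type} does need them: its converse uses $\mathfrak{B}_n^{(1)}(\bar{s}_{n,k}^{(1)},\bar{s}_{n,0}^{(1)})=0$ for \emph{all} $0\leqslant k\leqslant n$. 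The repair comes from your own linchpin: padding stability gives $\bar{s}_{n+2,j}^{(1)}=[(\bar{s}_{n,j}^{(1)})^{\top}\ 0\ 0]^{\top}$, and $G_n^{(1)}$ is the leading principal submatrix of $G_{n+2}^{(1)}$, so
\begin{equation*}
\mathfrak{B}_n^{(1)}\bigl(\bar{s}_{n,i}^{(1)},\bar{s}_{n,j}^{(1)}\bigr)=\mathfrak{B}_{n+2}^{(1)}\bigl(\bar{s}_{n+2,i}^{(1)},\bar{s}_{n+2,j}^{(1)}\bigr),
\end{equation*}
and at level $n+2$ every pair with $i<j\leqslant n$ falls inside the range you did cover. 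Adding this one transfer sentence closes the converse; note that your closing remark about the restriction $n\geqslant 2$ addresses a different (and harmless) boundary issue, not this one.
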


\begin{proof}
We will use the fact that 
$$
S_{n,1}^{\top}=\left[\bar{s}_{n,0}^{(1)}\ \bar{s}_{n,1}^{(1)}\, \ldots\, \bar{s}_{n,n}^{(1)} \right],
$$
is a unit upper triangular matrix.

    Suppose that $\{\mu_n\}_{n\geqslant 0}$ is a classical sequence satisfying \eqref{eq:ttr-moments}. Since $\Phi_{n-2}\,\bar{s}_{n-2,j}^{(1)}\in \mathbb{R}^{n+1}$  and $\{\bar{s}_{n,0},\bar{s}_{n,1},\ldots,\bar{s}_{n,n}\}$ constitutes an orthogonal basis for $\mathbb{R}^{n+1}$, we can write
    $$
    \Phi_{n-2}\,\bar{s}_{n-2,j}^{(1)}=\sum_{k=0}^{n}a_{j,k}\,\bar{s}_{n,k},
    $$
    with
    $$
    a_{j,k}\,h_k\,=\,\bar{s}_{n,k}^{\top}\,G_n\,\Phi_{n-2}\,\bar{s}_{n-2,j}^{(1)}, \quad 0\leqslant k \leqslant n.
    $$
    The absence of a subindex indicating some dependence on $n$ is justified by Theorem \ref{th:choleskySn+1} and  Theorem \ref{th:choleskysigma}, which imply that the expression for $\bar{s}_{n,k}$ and $\bar{s}_{n-2,j}^{(1)}$ are independent of $n$.
    
    By \eqref{eq:GPhi} and since 
    $$
    S_{n,1}^{\top}=\left[\begin{array}{c|c} S_{n-2,1}^{\top} & \ast \\
    \hline
    \mathtt{0} & \begin{matrix} 1 & \ast \\ 0 & 1\end{matrix} \end{array} \right],
    $$
    we have
    $$
    a_{j,k}\,h_k\,=\,\bar{s}_{n,k}^{\top}\,G_n^{(1)}\,\begin{bmatrix}\bar{s}_{n-2,j}^{(1)}\\ 0 \\ 0 \end{bmatrix}=\bar{s}_{n,k}^{\top}\,G_n^{(1)}\,\bar{s}_{n,j}^{(1)}, \quad 0\leqslant k \leqslant n, \quad 0\leqslant j \leqslant n-2.
    $$
    On one hand, by Corollary \ref{th:higher-Hahn-type}, the set $\{ \bar{s}_{n,0}^{(1)}, \ldots, \bar{s}_{n,n}^{(1)}\}$ is an orthogonal basis for $\mathbb{R}^{n+1}$, and since $\bar{s}_{n,k}=\bar{s}_{n,k}^{(1)}+\alpha_{k,k-1}\,\bar{s}^{(1)}_{n,k-1}+\cdots+\alpha_{k,0}\,\bar{s}_{n,0}^{(1)}$, we get that $a_{j,k}=0$ for $0\leqslant k \leqslant j-1$, and
    $$
    a_{j,j}\,h_j=\bar{s}_{n,j}^{\top}\,G_n^{(1)}\,\bar{s}_{n,j}^{(1)}= (\bar{s}_{n,j}^{(1)})^{\top}\,G_n^{(1)}\,\bar{s}_{n,j}^{(1)}=h_j^{(1)}.
    $$
    Therefore, 
    $$
    a_{j,j}=\frac{h_j^{(1)}}{h_j}=-\frac{\lambda_{j+1}}{(j+1)^2}\ne 0.
    $$
    On the other hand, since $S_{n-2,1}^{\top}$ is upper triangular, we have
    $$
    \Phi_{n-2}\,\bar{s}_{n-2,j}^{(1)}=\begin{bmatrix}
        \ast \\ a \\ \mathtt{0}
    \end{bmatrix}\in \mathbb{R}^{n+1}, \quad 0 \leqslant j \leqslant n-2,
    $$
    where the last $n-j-2$ entries are zero. This implies that $a_{j,k}=0$ for $k\geqslant j+3$. Therefore, \eqref{eq:firststrel} holds with $a_j=a_{j,j+2}$, $b_j=a_{j,j+1}$ and $c_j=a_{j,j} \neq 0$.

    Conversely, suppose that \eqref{eq:firststrel} holds with the entries of $\Phi_{n-2}$ are real numbers $a,b,c$ satisfying $|a|+|b|+|c|>0$. On one hand, multiplying both sides of \eqref{eq:firststrel} by $\bar{s}_{n,0}^{\top}\,G_n$, we obtain for $0\leqslant j \leqslant n-2$,
    $$
    \bar{s}_{n,0}^{\top}\,G_n\left( a_j\,\bar{s}_{n,j+2}+b_j\,\bar{s}_{n,j+1}+c_j\,\bar{s}_{n,j}\right) = \bar{s}_{n,0}^{\top}\,G_n\Phi_{n-2}\,\bar{s}_{n-2,j}^{(1)}.
    $$
    By the orthogonality of $\{\bar{s}_{n,0},\bar{s}_{n,1},\ldots,\bar{s}_{n,n}\}$, we have
    $$
    \bar{s}_{n,0}^{\top}\,G_n\Phi_{n-2}\, N_{n-1}\,\bar{s}_{n-1,j+1} \,=\, (j+1) \,c_0\,h_0\,\delta_{j,0}.
    $$
    On the other hand, we have
    $$
    \bar{s}_{n-1,1}^{\top}\,G_{n-1}\,\bar{s}_{n-1,j+1}\,=\,h_1\,\delta_{j,0}=(j+1)\,h_1\,\delta_{j,0}, \quad 0 \leqslant j \leqslant n-2.
    $$
    Therefore,
    $$
    \bar{s}_{n,0}^{\top}\,G_n\Phi_{n-2}\,N_{n-1}\,\bar{s}_{n-1,j+1}\,=\,c_0\,\frac{h_0}{h_1}\,\bar{s}_{n-1,1}^{\top}\,G_{n-1}\,\bar{s}_{n-1,j+1},
    $$
    or, equivalently,
    $$
    \left(\bar{s}_{n,0}^{\top}\,G_n\,\Phi_{n-2}\,N_{n-1}-c_0\,\frac{h_0}{h_1}\,\bar{s}_{n-1,1}^{\top}\,G_{n-1} \right)\,\bar{s}_{n-1,j+1}=0, \quad 0\leqslant j \leqslant n-2.
    $$
    Since $N_{n-1}\bar{s}_{n-1,0}=\mathtt{0}$ and $\bar{s}_{n-1,1}^{\top}\,G_{n-1}\,\bar{s}_{n-1,0}=0$, we have
    $$
    \left(\bar{s}_{n,0}^{\top}\,G_n\,\Phi_{n-2}\,N_{n-1}-c_0\,\frac{h_0}{h_1}\,\bar{s}_{n-1,1}^{\top}\,G_{n-1} \right)\,S_{n-1}^{\top}=\mathtt{0}, 
    $$
    and, therefore,
     \begin{equation}\label{eq:struct}
    \bar{s}_{n,0}^{\top}\,G_n\,\Phi_{n-2}\,N_{n-1}-c_0\,\frac{h_0}{h_1}\,\bar{s}_{n-1,1}^{\top}\,G_{n-1} =\mathtt{0},
    \end{equation}
    where we have used the fact that $S_{n-1}^{\top}$ is an invertible matrix. If
$$
\bar{s}_{n-1,1}^{\top}=[s_{1,0} \ 1 \ 0 \ldots 0],
$$
then let $d=-c_0\frac{h_0}{h_1}$ and $e=d\,s_{1,0}$. In this way, the entries of \eqref{eq:struct} read
\begin{align*}
&d\,\mu_1+e\,\mu_0 = 0,\\
&(k\,a+d)\,\mu_{k+1}+(k\,b+e)\,\mu_k+k\,c\,\mu_{k-1}=0, \quad 1\leqslant k \leqslant n-1,
\end{align*}
which proves that $\{\mu_n\}_{n\geqslant 0}$ satisfies \eqref{eq:ttr-moments} and, thus, $\{\mu_n\}_{n\geqslant 0}$ is classical.
\end{proof}

We briefly recast Theorem \ref{th:firststructurerelation} in terms of polynomials. Let $\{\mu_n\}_{n\geqslant 0}$ be a classical sequence of real numbers, and let $\mathbf{u}$ be the moment functional defined as $\mu_n=\langle \mathbf{u}, x^n\rangle$, $n\geqslant 0$. Let $\{P_n(x)\}_{n\geqslant 0}$  and $\{Q_n(x)\}_{n\geqslant 0}$ let be sequences of polynomials with 
$$
P_n(x)\,=\,\bar{s}_{n,n}^{\top}\,\mathtt{X}_n \quad \text{and} \quad Q_{n}(x)\,=\,(\bar{s}_{n,n}^{(1)})^{\top}\,\mathtt{X}_n \quad n\geqslant 0.
$$
Then $\{P_n(x)\}_{n\geqslant 0}$  is an OPS associated with $\mathbf{u}$, and
$$
Q_j(x)\,=\,\frac{P_{j+1}'(x)}{j+1}, \quad 0\leqslant j \leqslant n.
$$
For $a,b,c\in \mathbb{R}$ such that $|a|+|b|+|c|>0$, and $n\geqslant 0$, the matrix $\Phi_n$ is the matrix representation of the linear mapping from $\Pi_n$ to $\Pi_{n+2}$ defined by $p(x) \mapsto \phi(x)\,p(x)$ with $\phi(x)=a\,x^2+b\,x+c$. Therefore,
$$
(\Phi_{n}\,\bar{s}_{n,n}^{(1)})^{\top}\,\mathtt{X}_{n+2}=\phi(x)\,Q_{n}(x).
$$
In this way, by Theorem \ref{th:firststructurerelation}, $\mathbf{u}$ is a classical moment functional if and only if there is a non zero polynomial $\phi(x)$ with $\deg \phi(x) \leqslant 2$, and real numbers $a_n,b_n,c_n$, $n\geqslant 0$, with $c_n\ne 0$,
$$
\phi(x)\,Q_n(x)=a_n\,p_{n+2}(x)+b_n\,p_{n+2}(x)+c_n\,p_n(x), \quad n\geqslant 0.
$$

\subsection{Second structure relation} 
The following characterization is similar to \eqref{eq:firststrel} but it has a dual flavor in the sense that the roles of $\{\bar{s}_{n,j}\}_{j=0}^n$ and $\{\bar{s}_{n,j}^{(1)}\}_{j=0}^n$ are interchanged. 

\begin{theorem}\label{th;secondstructurerelation}
Let $\{\mu_n\}_{n\geqslant 0}$ be a sequence of real numbers such that $\det G_n\ne 0$ for $n\geqslant 0$. Let $S_{n}^{-1}\,H_{n}\,S_{n}^{-\top}$ be the Cholesky factorization of $G_{n}$ and let $\bar{s}_{n,0},\bar{s}_{n,1},\ldots,\bar{s}_{n,n}$ denote the columns of $S_{n}^{\top}$. Then $\{\mu_n\}_{n\geqslant 0}$ is classical if and only if there are real numbers $\kappa_j,\xi_j$, $j\geqslant 0$, such that
\begin{equation}\label{eq:secondstr}
\bar{s}_{n,j}\,=\,\bar{s}_{n,j}^{(1)}+\kappa_j\,\bar{s}_{n,j-1}^{(1)}+\xi_j\,\bar{s}_{n,j-2}^{(1)},  \quad 0\leqslant j \leqslant n,
\end{equation}
where, by convention, we set $\bar{s}_{n,-2}^{(1)}\,=\,\bar{s}_{n,-1}^{(1)}\,=\,\mathtt{0}$.
\end{theorem}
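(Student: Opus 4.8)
\emph{Overview.} The statement is the vector form of the classical second structure relation $P_n=Q_n+\kappa_nQ_{n-1}+\xi_nQ_{n-2}$ with $Q_j=P'_{j+1}/(j+1)$, so the plan is to prove the two implications separately, leaning on the first structure relation (Theorem~\ref{th:firststructurerelation}), the Cholesky factorization of $G_n^{(1)}$ (Theorem~\ref{th:choleskysigma}), and the Lemma preceding Theorem~\ref{th:firststructurerelation} (identity \eqref{eq:GPhi}).

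\emph{Necessity (classical $\Rightarrow$ \eqref{eq:secondstr}).} Both $S_n^{\top}$ and $S_{n,1}^{\top}$ are unit upper triangular, so the change of basis matrix $C=(S_{n,1}^{\top})^{-1}S_n^{\top}$ is unit upper triangular; writing $\bar{s}_{n,j}=\sum_{k=0}^{j}a_{j,k}\,\bar{s}_{n,k}^{(1)}$ we already get $a_{j,j}=1$ and $a_{j,k}=0$ for $k>j$. The coefficients are computed from the orthogonality of $\{\bar{s}_{n,k}^{(1)}\}$ with respect to $\mathfrak{B}_n^{(1)}$ (Theorem~\ref{th:choleskysigma}), namely $a_{j,k}\,h_k^{(1)}=\bar{s}_{n,j}^{\top}\,G_n^{(1)}\,\bar{s}_{n,k}^{(1)}$. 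For $k\leqslant j-3\leqslant n-2$ the vector $\bar{s}_{n,k}^{(1)}$ has degree $k$, hence its last two coordinates vanish; thus $\bar{s}_{n,k}^{(1)}=[\,(\bar{s}_{n-2,k}^{(1)})^{\top}\ 0\ 0\,]^{\top}$ (its nonzero entries are independent of $n$ by Theorem~\ref{th:choleskySn+1}), and \eqref{eq:GPhi} gives $G_n^{(1)}\,\bar{s}_{n,k}^{(1)}=G_n\,\Phi_{n-2}\,\bar{s}_{n-2,k}^{(1)}$. Now the first structure relation \eqref{eq:firststrel} expands $\Phi_{n-2}\,\bar{s}_{n-2,k}^{(1)}=a_k\,\bar{s}_{n,k+2}+b_k\,\bar{s}_{n,k+1}+c_k\,\bar{s}_{n,k}$, so by the orthogonality of $\{\bar{s}_{n,i}\}$ with respect to $\mathfrak{B}_n$ we obtain $a_{j,k}\,h_k^{(1)}=h_j\,(a_k\,\delta_{j,k+2}+b_k\,\delta_{j,k+1}+c_k\,\delta_{j,k})$. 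This vanishes whenever $k\leqslant j-3$, so only $k=j,j-1,j-2$ remain, and we set $\kappa_j=a_{j,j-1}$, $\xi_j=a_{j,j-2}$. These are independent of $n$ because the entries of $\bar{s}_{n,j}$ and $\bar{s}_{n,k}^{(1)}$ are (Theorem~\ref{th:choleskySn+1}), which gives \eqref{eq:secondstr}.

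\emph{Sufficiency (\eqref{eq:secondstr} $\Rightarrow$ classical).} Here I would follow the collapse-and-read-off template of Theorems~\ref{th:Hahn-type} and~\ref{th:firststructurerelation}. Substituting $\bar{s}_{n,k}^{(1)}=\frac{1}{k+1}N_{n+1}\,\bar{s}_{n+1,k+1}$, the relation \eqref{eq:secondstr} reads, in polynomial form, $P_n=Q_n+\kappa_nQ_{n-1}+\xi_nQ_{n-2}$ with $Q_m=P'_{m+1}/(m+1)$. Since $\det G_n\ne 0$, the polynomials $P_n=\bar{s}_{n,n}^{\top}\mathtt{X}_n$ form a monic OPS, so expanding $x\,P_n$ in the orthogonal basis yields a three–term recurrence $P_{n+1}=(x-\beta_n)P_n-\gamma_nP_{n-1}$ with $\gamma_n=h_n/h_{n-1}\ne 0$. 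Differentiating this recurrence, using $P'_m=m\,Q_{m-1}$ and eliminating $P_n$ through the second structure relation, produces a three–term recurrence for $\{Q_n\}$; pairing the resulting identities with $G_n$ (or lifting to $G_{n+1}$ and invoking \eqref{eq:GPhi}) and collapsing by orthogonality should yield a single vector identity of the type \eqref{eq:struct}–\eqref{eq:hahn}, whose entries are exactly \eqref{eq:ttr-moments}. Once \eqref{eq:ttr-moments} holds, $\{\mu_n\}$ is pre-classical, and classical since $\det G_n\ne 0$ by hypothesis.

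\emph{Main obstacle.} The delicate point in the converse is that, unlike in the Hahn and first-structure characterizations, the five constants $a,b,c,d,e$ are \emph{not} part of the hypothesis and must be reconstructed from the data $\kappa_j,\xi_j$, the diagonal scalars $h_j,h_j^{(1)}$, the entry $s_{1,0}$, and $\beta_n,\gamma_n$. One expects to read off $d,e$ from the lowest instance (as in the proofs of Theorems~\ref{th:Hahn-type} and~\ref{th:firststructurerelation}) and $a,b,c$ from the leading relations (the necessity computation suggests $a_j\equiv a$ and $\xi_j=a\,h_j/h_{j-2}^{(1)}$). The real work is to verify that these reconstructed constants are \emph{consistent}, i.e.\ independent of $n$ and of $j$, so that one and the same recurrence \eqref{eq:ttr-moments} governs every moment; establishing this compatibility, presumably by induction on $n$ using the bordering structure of $G_{n+1}$, is where the proof should concentrate its effort.
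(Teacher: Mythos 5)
Your necessity argument is correct and coincides with the paper's own proof: expand $\bar{s}_{n,j}$ in the $\mathfrak{B}_n^{(1)}$-orthogonal basis $\{\bar{s}_{n,k}^{(1)}\}_{k=0}^n$, use the zero-padding of $\bar{s}_{n,k}^{(1)}$ together with \eqref{eq:GPhi} to rewrite $G_n^{(1)}\bar{s}_{n,k}^{(1)}$ as $G_n\,\Phi_{n-2}\,\bar{s}_{n-2,k}^{(1)}$, invoke the first structure relation \eqref{eq:firststrel}, and kill all coefficients with $k\leqslant j-3$ by orthogonality with respect to $\mathfrak{B}_n$; the $n$-independence of the coefficients follows from the bordering structure (Theorem~\ref{th:choleskySn+1}). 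Nothing to change there.

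The sufficiency direction, however, contains a genuine gap, and you say so yourself: the reconstruction of the constants $a,b,c,d,e$, which you correctly identify as ``the real work,'' is never carried out (``should yield,'' ``presumably by induction''). Worse, the template you propose to imitate is circular in this setting: the collapse arguments of Theorems~\ref{th:Hahn-type} and~\ref{th:firststructurerelation}, as well as the identity \eqref{eq:GPhi} and anything involving $G_n^{(1)}$ or $\Phi_{n-2}$, all presuppose given numbers $a,b,c$ defining $\{\sigma_n\}$ --- but in this converse no such numbers exist yet; hypothesis \eqref{eq:secondstr} only involves the derivative vectors $\bar{s}_{n,j}^{(1)}=\frac{1}{j+1}N_{n+1}\bar{s}_{n+1,j+1}$, which are defined without any reference to $a,b,c$. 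The paper closes this gap with a short matrix computation that requires neither induction nor the three-term recurrence: writing \eqref{eq:secondstr} as $S_n^{\top}=S_{n,1}^{\top}\,U_n$ with $U_n$ unit upper triangular built from the $\kappa_j,\xi_j$, the factorization $H_n=S_n\,G_n\,S_n^{\top}$ yields $S_{n,1}^{-\top}=U_n\,H_n^{-1}\,S_n\,G_n$; combining this with the intertwining relation $N_{n+1}\,S_{n+1}^{\top}=S_{n,1}^{\top}\,N_{n+1}$ and with $\frac{1}{h_1}\bar{s}_{n+1,1}^{\top}\,G_{n+1}\,S_{n+1}^{\top}=\bar{s}_{n,0}^{\top}\,N_{n+1}$, and cancelling the invertible factor $S_{n+1}^{\top}$, gives the single row-vector identity
\begin{equation*}
\frac{1}{h_1}\,\bar{s}_{n+1,1}^{\top}\,G_{n+1}\,=\,\bar{s}_{n,0}^{\top}\,U_n\,H_n^{-1}\,S_n\,G_n\,N_{n+1}.
\end{equation*}
The constants then come for free: $\bar{s}_{n,0}^{\top}U_n\,H_n^{-1}S_n=\frac{1}{h_0}\bar{s}_{n,0}^{\top}+\frac{\kappa_1}{h_1}\bar{s}_{n,1}^{\top}+\frac{\xi_2}{h_2}\bar{s}_{n,2}^{\top}$ has only its first three entries nonzero and these define $c,b,a$, while $-\frac{1}{h_1}\bar{s}_{n+1,1}$ defines $e,d$; all are independent of $n$ by Theorem~\ref{th:choleskySn+1}, which settles the consistency issue you flagged, and reading the identity entrywise is exactly \eqref{eq:ttr-moments}. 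Your proposal stops precisely where this construction should begin.
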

\begin{proof}
We will use the fact that 
$$
S_{n,1}^{\top}=\left[\bar{s}_{n,0}^{(1)}\ \bar{s}_{n,1}^{(1)}\, \ldots\, \bar{s}_{n,n}^{(1)} \right],
$$
is a unit upper triangular matrix.

    Suppose that $\{\mu_n\}_{n\geqslant 0}$ is a classical sequence. By Corollary \ref{th:higher-Hahn-type}, $\{ \bar{s}_{n,0}^{(1)}, \ldots, \bar{s}_{n,n}^{(1)}\}$ is an orthogonal basis for $\mathbb{R}^{n+1}$. Since $S_{n,1}^{\top}$ is a unit upper triangular matrix, we can write
    $$
    \bar{s}_{n,j}\,=\,\bar{s}_{n,j}^{(1)}+\sum_{k=0}^{j-1} a_{j,k}\,\bar{s}_{n,k}^{(1)}, \quad 0 \leqslant j \leqslant n,
    $$
    where
    $$
    a_{j,k}\,h_k^{(1)}\,=\,(\bar{s}_{n,k}^{(1)})^{\top}\,G_n^{(1)}\,\bar{s}_{n,j}, \quad 0\leqslant k \leqslant j-1.
    $$
    As before, the absence of a subindex indicating some dependence on $n$ is justified by Theorem \ref{th:choleskySn+1} and  Theorem \ref{th:choleskysigma}, which imply that the expression for $\bar{s}_{n,k}$ and $\bar{s}_{n-2,j}^{(1)}$ are independent of $n$. Using \eqref{eq:GPhi} and Theorem \ref{th:firststructurerelation}, we get
    $$
    a_{j,k}\,h_k^{(1)}\,=\,(a_k\,\bar{s}_{n,k+2}+b_k\,\bar{s}_{n,k+1}+c_k\,\bar{s}_{n,k})^{\top}\,G_n\,\bar{s}_{n,j}, \quad 0\leqslant k \leqslant n-2.
    $$
    From the orthogonality of $\{ \bar{s}_{n,0}, \ldots, \bar{s}_{n,n}\}$, we obtain $a_{j,k}\,=\,0$ for $k\leqslant j-3$. Therefore, \eqref{eq:secondstr} holds with $\kappa_j=a_{j,j-1}$ and $\xi_j=a_{j,j-2}$.

    Conversely, suppose that there are real numbers $\kappa_j, \xi_j$, $j\geqslant 0$, such that \eqref{eq:secondstr} holds. For $n\geqslant 0$, define the $(n+1)\times (n+1)$ unit upper triangular matrices
    $$
    U_0=1, \quad U_1=\begin{bmatrix}
        1 & \kappa_1 \\
        0 & 1
    \end{bmatrix}, \quad U_n=\left[ \begin{array}{c|c}
     & \mathtt{0} \\
     U_{n-1} & \xi_n \\
      & \kappa_n \\
      \hline
      \mathtt{0} & 1
    \end{array}\right], \quad n\geqslant 2.
    $$
    Then \eqref{eq:secondstr} can be written as
    $$
    S_n^{\top}\,=\,S_{n,1}^{\top}\,U_n.
    $$
    Using $H_n=S_n\,G_n\,S_n^{\top}$, we get
    $$
    U_n\,=\,U_n\,H_n^{-1}\,S_n\,G_n\,S_n^{\top}\,=\,U_n\,H_n^{-1}\,S_n\,G_n\,S_{n,1}^{\top}\,U_n,
    $$
    which implies
    $$
    I_{n+1}\,=\,U_n\,H_n^{-1}\,S_n\,G_n\,S_{n,1}^{\top}.
    $$
    From this equality we get
    \begin{equation}\label{eq:a}
    S_{n,1}^{-\top}\,=\,U_n\,H_n^{-1}\,S_n\,G_n.
    \end{equation}
    On one hand, observe that
    \begin{equation}\label{eq:b}
    \begin{aligned}
    N_{n+1}\,S_{n+1}^{\top}&=\,\begin{bmatrix}
        \mathtt{0} & N_{n+1}\,\bar{s}_{n+1,1} & N_{n+1}\,\bar{s}_{n+1,2} & \ldots & N_{n+1}\,\bar{s}_{n+1,n+1}
    \end{bmatrix}\\
    &=\,\begin{bmatrix}
        \mathtt{0} & \bar{s}_{n,0}^{(1)} & 2\,\bar{s}_{n,1}^{(1)} & \ldots & (n+1)\,\bar{s}_{n,n}^{(1)}
    \end{bmatrix}\\
    &=\,S_{n,1}^{\top}\,N_{n+1},
    \end{aligned}
    \end{equation}
    Combining \eqref{eq:a} and \eqref{eq:b}, we get
    $$
    N_{n+1}\,=\,U_n\,H_n^{-1}\,S_n\,G_n\,N_{n+1}\,S_{n+1}^{\top}.
    $$
    On the other hand, we have
    $$
    \frac{1}{h_1}\,\bar{s}_{n+1,1}^{\top}\,G_{n+1}\,S_{n+1}^{\top}\,=\,\bar{s}_{n,0}^{\top}\,N_{n+1}.
    $$
    Therefore,
    \begin{equation}\label{eq:c}
    \frac{1}{h_1}\,\bar{s}_{n+1,1}^{\top}\,G_{n+1}\,=\,\bar{s}_{n,0}^{\top}\,U_n\,H_n^{-1}\,S_n\,G_n\,N_{n+1}.
    \end{equation}
    If we let
    \begin{equation}\label{eq:d}
    \begin{bmatrix}
        c \\ b \\ a \\ \mathtt{0}
    \end{bmatrix} = \frac{\xi_2}{h_2}\,\bar{s}_{n,2}+\frac{\kappa_1}{h_1}\,\bar{s}_{n,1}+\frac{1}{h_0}\,\bar{s}_{n,0}, \quad \text{and} \quad \begin{bmatrix}
        e \\ d \\ \mathtt{0}
    \end{bmatrix}=-\frac{1}{h_1}\,\bar{s}_{n+1,1},
    \end{equation}
    then the entries of \eqref{eq:c} read
    \begin{align*}
-e\,\mu_0-d\,\mu_{1}&= 0,\\
-e\,\mu_k-d\,\mu_{k+1}&=k\,c\,\mu_{k-1}+k\,b\,\mu_k+k\,a\,\mu_{k+1}, \quad 1\leqslant k \leqslant n+1,
\end{align*}
which proves that $\{\mu_n\}_{n\geqslant 0}$ satisfies \eqref{eq:ttr-moments} and, thus, $\{\mu_n\}_{n\geqslant 0}$ is classical.
\end{proof}

For a classical sequence $\{\mu_n\}_{n\geqslant 0}$, let $\mathbf{u}$ be the moment functional defined as $\mu_n=\langle \mathbf{u}, x^n\rangle$, $n\geqslant 0$, and let $\{P_n(x)\}_{n\geqslant 0}$  and $\{Q_n(x)\}_{n\geqslant 0}$ let be sequences of polynomials with 
$$
P_n(x)\,=\,\bar{s}_{n,n}^{\top}\,\mathtt{X}_n \quad \text{and} \quad Q_{n}(x)\,=\,(\bar{s}_{n,n}^{(1)})^{\top}\,\mathtt{X}_n \quad n\geqslant 0.
$$
Theorem \ref{th;secondstructurerelation} implies that there are real numbers $\kappa_n, \xi_n$, $n\geqslant 0$, such that 
$$
P_n(x)\,=\,Q_n(x)+\kappa_n\,Q_{n-1}(x)+\xi_n\,Q_{n-2}(x), \quad n\geqslant 0,
$$
where, by convention, $Q_{-2}(x)=Q_{-1}(x)=0$. Moreover, we deduce from \eqref{eq:d} and Theorem \ref{th:pearson} that $\mathbf{u}$ satisfies $D(\phi\,\mathbf{u})=\psi\,\mathbf{u}$ with
$$
\phi(x)=\frac{\xi_2}{h_2}P_2(x)+\frac{\kappa_1}{h_1}P_1(x)+\frac{1}{h_0}P_0(x) \quad \text{and} \quad \psi(x)=-\frac{1}{h_1}P_1(x).
$$

\subsection{Rodrigues-type formula}  

Recall that classical sequences of real numbers $\{\mu_n\}_{n\geqslant 0}$ satsisfy the three-term recurrence relation \eqref{eq:ttr-moments} which can be written in matrix form as
$$
N_{n+1}^{\top}\,G_n^{(1)}\bar{s}_{n,0}+G_n\,\begin{bmatrix}
    e \\ d \\ \mathtt{0}
\end{bmatrix}\,=\,\mathtt{0}, \quad n\geqslant 1,
$$
with $G_n^{(1)}$ as defined in \eqref{def:Gn1}. The following characterization shows that classical sequences satisfy higher order recurrence relations which can be written in matrix form as well.

\begin{theorem}\label{th:rodrigues}
Let $\{\mu_n\}_{n\geqslant 0}$ be a sequence of real numbers such that $\det G_n\ne 0$, $n\geqslant 0$. Let $S_{n}^{-1}\,H_{n}\,S_{n}^{-\top}$ be the Cholesky factorization of $G_{n}$ and let $\bar{s}_{n,0},\bar{s}_{n,1},\ldots,\bar{s}_{n,n}$ denote the columns of $S_{n}^{\top}$. Then $\{\mu_n\}_{n\geqslant 0}$ is classical if and only if there are $a,b,c\in \mathbb{R}$ such that $|a|+|b|+|c|>0$, and non zero real numbers $\varpi_k$, $k\geqslant 1$, such that
\begin{equation}\label{eq:rodrigues}
    N_{n+k}^{\top}\cdots N_{n+1}^{\top}\,G_n^{(k)}\,\bar{s}_{n,0}\,=\,\varpi_k\,\,G_{n+k}\,\bar{s}_{n+k,k}, \quad n\geqslant 1,  \quad 1\leqslant k \leqslant n,
\end{equation}
with $G_n^{(k)}$ as defined in \eqref{def:Gnk}.
\end{theorem}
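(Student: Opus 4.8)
The plan is to translate both sides of \eqref{eq:rodrigues} into statements about truncated moment vectors and then to recognize the identity as the matrix form of the classical Rodrigues formula. Let $\mathbf{u}$ be the moment functional with $\mu_n=\langle\mathbf{u},x^n\rangle$ and set $\mathbf{v}_k=\phi^k\,\mathbf{u}$ with $\phi(x)=a\,x^2+b\,x+c$, so that $\sigma_n^{(k)}=\langle\mathbf{v}_k,x^n\rangle$. Two dictionary facts drive the argument. First, since $\bar{s}_{n,0}=\bar{e}_0$, the vector $G_n^{(k)}\,\bar{s}_{n,0}=[\sigma_0^{(k)},\ldots,\sigma_n^{(k)}]^{\top}$ is the vector of moments of $\mathbf{v}_k$ up to degree $n$. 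Second, the only nonzero entries of $N_{m+1}^{\top}$ are $(N_{m+1}^{\top})_{j,j-1}=j$, so for any functional $\mathbf{w}$ with $w_j=\langle\mathbf{w},x^j\rangle$ one computes $(N_{m+1}^{\top}\,[w_0,\ldots,w_m]^{\top})_j=j\,w_{j-1}=\langle\mathbf{w},(x^j)'\rangle=\langle -D\mathbf{w},x^j\rangle$; that is, $N_{m+1}^{\top}$ carries the moment vector of $\mathbf{w}$ (up to degree $m$) to the moment vector of $-D\mathbf{w}$ (up to degree $m+1$).

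Applying this second fact $k$ times, starting from the moment vector of $\mathbf{v}_k$, shows that the left-hand side of \eqref{eq:rodrigues} is exactly the moment vector, up to degree $n+k$, of $(-1)^k\,D^k(\phi^k\,\mathbf{u})$. For the right-hand side, the nesting of the Cholesky columns in Theorem \ref{th:choleskySn+1} gives $\bar{s}_{n+k,k}^{\top}\,\mathtt{X}_{n+k}=P_k(x)$, the monic orthogonal polynomial of degree $k$, so the $j$-th entry of $G_{n+k}\,\bar{s}_{n+k,k}$ equals $\langle\mathbf{u},x^j\,P_k\rangle=\langle P_k\,\mathbf{u},x^j\rangle$; hence $G_{n+k}\,\bar{s}_{n+k,k}$ is the moment vector of $P_k\,\mathbf{u}$ up to degree $n+k$.

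With this dictionary the forward implication becomes the Rodrigues formula. If $\{\mu_n\}_{n\geqslant 0}$ is classical then, by Theorem \ref{th:pearson} together with Definition \ref{def:classical}, $\mathbf{u}$ is a quasi-definite classical moment functional, so the Rodrigues formula in Theorem \ref{th:classical-char} gives $D^k(\phi^k\,\mathbf{u})=k_k\,P_k\,\mathbf{u}$ with $k_k\neq 0$. (Alternatively one argues directly: $D^k(\phi^k\,\mathbf{u})=R_k\,\mathbf{u}$ for a polynomial $R_k$ with $\deg R_k\leqslant k$, and since $\langle D^k(\phi^k\,\mathbf{u}),P_j\rangle=(-1)^k\langle\phi^k\,\mathbf{u},P_j^{(k)}\rangle=0$ for $j<k$, the expansion of $R_k$ in the orthogonal basis collapses to its top term.) Putting $\varpi_k=(-1)^k\,k_k$, which is nonzero and independent of $n$, the two moment vectors identified above coincide for every admissible $n$, and this is precisely \eqref{eq:rodrigues}.

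For the converse it suffices to read \eqref{eq:rodrigues} at $k=1$. There the left-hand side has $j$-th entry $j\,\sigma_{j-1}^{(1)}=j\,(a\,\mu_{j+1}+b\,\mu_j+c\,\mu_{j-1})$ while the right-hand side has $j$-th entry $\varpi_1\,(\mu_{j+1}+s_{1,0}\,\mu_j)$, where $\bar{s}_{n+1,1}=[s_{1,0} \ 1 \ 0 \ \ldots \ 0]^{\top}$; equating entries for $0\leqslant j\leqslant n+1$ and setting $d=-\varpi_1$, $e=-\varpi_1\,s_{1,0}$ yields $(j\,a+d)\,\mu_{j+1}+(j\,b+e)\,\mu_j+j\,c\,\mu_{j-1}=0$, which, as $n$ ranges over all admissible values, is exactly \eqref{eq:ttr-moments}. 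Since $|a|+|b|+|c|>0$ and $\det G_n\neq 0$ for $n\geqslant 0$ are assumed, $\{\mu_n\}_{n\geqslant 0}$ is classical. The main obstacle is the bookkeeping in the forward direction: one must verify that the ordered product $N_{n+k}^{\top}\cdots N_{n+1}^{\top}$ realizes $(-1)^k D^k$ on truncated moment vectors—tracking both the growth of the degree bound from $n$ to $n+k$ and the shift $\mathbf{v}_k=\phi^k\,\mathbf{u}$—and that the scalar relating $(-1)^k D^k(\phi^k\,\mathbf{u})$ to $P_k\,\mathbf{u}$ is the same for every $n$; this $n$-independence is exactly what permits a single constant $\varpi_k$ to serve in \eqref{eq:rodrigues} simultaneously for all $n$.
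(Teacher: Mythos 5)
Your proof is correct, but its forward implication takes a genuinely different route from the paper's. The paper stays entirely inside its matrix framework: it multiplies the left-hand side of \eqref{eq:rodrigues} by $S_{n+k}$, pushes that factor through the product using the intertwining relations $N_{n+1}\,S_{n+1,k-1}^{\top}=S_{n,k}^{\top}\,N_{n+1}$ (so that $S_{n+k}\,N_{n+k}^{\top}\cdots N_{n+1}^{\top}=N_{n+k}^{\top}\cdots N_{n+1}^{\top}\,S_{n,k}$), collapses $S_{n,k}\,G_n^{(k)}\,\bar{s}_{n,0}^{(k)}=h_0^{(k)}\,\bar{e}_0$ by the orthogonality of Corollary \ref{th:higher-Hahn-type}, notes that $N_{n+k}^{\top}\cdots N_{n+1}^{\top}\,\bar{e}_0=k!\,\bar{e}_k$, and compares with $S_{n+k}\,G_{n+k}\,\bar{s}_{n+k,k}=h_k\,\bar{e}_k$ to cancel the invertible $S_{n+k}$; this yields \eqref{eq:rodrigues} with the explicit constant $\varpi_k=k!\,h_0^{(k)}/h_k$. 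You instead translate both sides into truncated moment vectors --- the left side as the moments of $(-1)^k D^k(\phi^k\,\mathbf{u})$, the right side as the moments of $P_k\,\mathbf{u}$ --- and invoke the Rodrigues formula, item 6 of Theorem \ref{th:classical-char}. Your dictionary is sound (it is essentially the content of the remark the paper places \emph{after} its proof, where the translation is run in the opposite direction), and the converse you give coincides with the paper's: read \eqref{eq:rodrigues} at $k=1$, set $d=-\varpi_1$, $e=-\varpi_1\,s_{1,0}$, and recover \eqref{eq:ttr-moments}. The trade-offs: the paper's argument is self-contained in its matrix development and produces $\varpi_k$ explicitly, whereas you import the functional Rodrigues formula as a black box --- logically legitimate, since Theorem \ref{th:classical-char} is quoted as known, but it is precisely the classical statement this section aims to re-derive by matrix means. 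Two small points of rigor on your side: item 6 as stated only asserts existence of \emph{some} $\phi$, so to use it with the specific $\phi=a\,x^2+b\,x+c$ coming from the recurrence you need that the Rodrigues $\phi$ is the Pearson $\phi$; your parenthetical direct argument covers this, but its opening claim that $D^k(\phi^k\,\mathbf{u})=R_k\,\mathbf{u}$ with $\deg R_k\leqslant k$ itself requires the short induction on $k$ via the product rule and the Pearson equation, which you should state rather than assume.
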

\begin{proof}
    Suppose that $\{\mu_n\}_{n\geqslant 0}$ is classical satisfying \eqref{eq:ttr-moments}. For $k\geqslant 0$, by Corollary \ref{th:higher-Hahn-type}, the set $\{ \bar{s}_{n,0}^{(k)}, \ldots, \bar{s}_{n,n}^{(k)}\}$ of vectors in $\mathbb{R}^{n+1}$ with
$$
\bar{s}_{n,j}^{(k)}=\frac{1}{j+1}\,N_{n+1}\,\bar{s}_{n+1,j+1}^{(k-1)}, \quad 0\leqslant j \leqslant n,
$$
where $s_{n,j}^{(0)}=\bar{s}_{n,j}$, constitutes an orthogonal basis for $\mathbb{R}^{n+1}$ with respect to the bilinear form associated with $\{\sigma_n^{(k)}\}_{n\geqslant 0}$. Let
$$
S_{n,k}^{\top}=\left[\bar{s}_{n,0}^{(k)}\ \bar{s}_{n,1}^{(k)}\, \ldots\, \bar{s}_{n,n}^{(k)} \right].
$$

On one hand, observe that for $k\geqslant 1$,
\begin{equation*}
    \begin{aligned}
    N_{n+1}\,S_{n+1,k-1}^{\top}&=\,\begin{bmatrix}
        \mathtt{0} & N_{n+1}\,\bar{s}_{n+1,1}^{(k-1)} & N_{n+1}\,\bar{s}_{n+1,2}^{(k-1)} & \ldots & N_{n+1}\,\bar{s}_{n+1,n+1}^{(k-1)}
    \end{bmatrix}\\
    &=\,\begin{bmatrix}
        \mathtt{0} & \bar{s}_{n,0}^{(k)} & 2\,\bar{s}_{n,1}^{(k)} & \ldots & (n+1)\,\bar{s}_{n,n}^{(k)}
    \end{bmatrix}\\
    &=\,S_{n,k}^{\top}\,N_{n+1},
    \end{aligned}
    \end{equation*}
    Then, since $\bar{s}^{(k)}_{n,0}=\bar{s}_{n,0}$ for $k\geqslant 1$,
    \begin{align*}
    S_{n+k}\,N_{n+k}^{\top}\cdots N_{n+1}^{\top}\,G_n^{(k)}\,\bar{s}_{n,0}&=\,N_{n+k}^{\top}\cdots N_{n+1}^{\top}\,S_{n,k}\,G_n^{(k)}\,\bar{s}_{n,0}^{(k)}\\
    &=\,k!\,h_0^{(k)}\,\bar{e}_k\in \mathbb{R}^{n+k+1},
    \end{align*}
    where $\bar{e}_k$ denotes the $k+1$-th column of the identity matrix $I_{n+k+1}$. On the other hand, 
    $$
    \frac{1}{h_k}\,S_{n+k}\,G_{n+k}\,\bar{s}_{n+k,k}\,=\,\bar{e}_k.
    $$
    Since $S_{n+k}$ is invertible, it follows that
    $$
    N_{n+k}^{\top}\cdots N_{n+1}^{\top}\,G_n^{(k)}\,\bar{s}_{n,0}\,=\,k!\,\frac{h_0^{(k)}}{h_k}\,\,G_{n+k}\,\bar{s}_{n+k,k}, \quad k\geqslant 1.
    $$
    Hence, \eqref{eq:rodrigues} holds with $\varpi_k=k!\,\frac{h_0^{(k)}}{h_k}$.

    Conversely, suppose that there are $a,b,c\in \mathbb{R}$ such that $|a|+|b|+|c|>0$, and non zero real numbers $\varpi_k$, $k\geqslant 1$, such that \eqref{eq:rodrigues} holds. In particular, for $k=1$, we have
    \begin{equation}\label{eq:e}
    N_{n+1}^{\top}\,G_n^{(1)}\,\bar{s}_{n,0}\,=\,\varpi_1\,\,G_{n+1}\,\bar{s}_{n+1,1}, \quad n\geqslant 0.
    \end{equation}
    If we let
    $$
    \begin{bmatrix}
        e \\ d 
    \end{bmatrix} =-\varpi_1\,\bar{s}_{1,1},
    $$
    and since $\bar{s}_{n+1,1}=\begin{bmatrix}\bar{s}_{1,1} \\ \mathtt{0} \end{bmatrix}$, then the entries of \eqref{eq:e} read
        \begin{align*}
-e\,\mu_0-d\,\mu_{1}&= 0,\\
-e\,\mu_k-d\,\mu_{k+1}&=k\,c\,\mu_{k-1}+k\,b\,\mu_k+k\,a\,\mu_{k+1}, \quad 1\leqslant k \leqslant n+2,
\end{align*}
which proves that $\{\mu_n\}_{n\geqslant 0}$ satisfies \eqref{eq:ttr-moments} and, thus, $\{\mu_n\}_{n\geqslant 0}$ is classical.
\end{proof}

We remark that if $\{\mu_n\}_{n\geqslant 0}$ is a classical sequence of real numbers, and $\mathbf{u}$ be the moment functional defined as $\mu_n=\langle \mathbf{u}, x^n\rangle$, $n\geqslant 0$, then the entries of \eqref{eq:e} can be written as
$$
\langle D(\phi\,\mathbf{u}), x^k \rangle \,=\,\langle \psi\,\mathbf{u}, x^k\rangle, \quad 0\leqslant k \leqslant n+2,
$$
where $\phi(x)=a\,x^2+b\,x+c$ and $\psi(x)=d\,x+e$, which holds for all $n\geqslant 0$. Hence, $\mathbf{u}$ satisfies \eqref{pearson}. Moreover, it is straightforward, but tedious, to verify that the entries of \eqref{eq:rodrigues} can be written as
$$
\langle D^k(\phi^k\,\mathbf{u}), x^j\rangle \,=\,\langle \varpi_k\,P_k\,\mathbf{u}, x^j\rangle, \quad 0\leqslant j \leqslant n+k+1,
$$
where $P_k(x)=\bar{s}_{n+k,k}^{\top}\,\mathtt{X}_{n+k}$, $k\geqslant 1$, which holds for $n\geqslant 1$. Hence, $\mathbf{u}$ satisfies $D^k(\phi^k\,\mathbf{u})=\varpi_k\,P_k\,\mathbf{u}$ for $k\geqslant 1$ (and holds for $k=0$ with $\varpi_0=1$).

%%%%%%%%%%%%%%%%%%%%%%%%%%%%%%%%%%%%%%%%%%%%%%%%%%%%%%%%%%%%%%%%%%%%%%%%%%%

\end{document}